\apptocmd{\UrlBreaks}{\do\-\do\?}{}{}
\theoremstyle:=definition,remark,plain\do{%
        \expandafter\g@addto@macro\csname th@\theoremstyle\endcsname{%
            \addtolength\thm@preskip\parskip
            }%
        }
\newtheorem{thm}{Theorem}[section]
\newtheorem{lem}{Lemma}[section]
\newtheorem{coro}{Corollary}[section]
\theoremstyle{definition}
\newtheorem{remark}{Remark}[section]
\numberwithin{equation}{section} 
\newcommand\mynobreakpar{\par\nobreak\@afterheading} 
\newcommand{\beq}{\begin{equation*}}
\newcommand{\eeq}{\end{equation*}}
\newcommand{\beqn}{\begin{equation}}
\newcommand{\eeqn}{\end{equation}}
\newcommand{\N}{\mathbb{N}}
\newcommand{\R}{\mathbb{R}}
\newcommand{\C}{\mathbb{C}}
\newcommand{\dd}{\mathrm{d}}
\newcommand{\ee}{\mathrm{e}}
\newcommand{\ii}{\mathrm{i}}
\newcommand{\tn}{\textnormal}
\newcommand{\ds}{\displaystyle}
\newcommand{\ph}{\varphi}
\newcommand{\la}{\lambda}
\newcommand{\rh}{\varrho}
\newcommand{\tr}{$\triangle$}
\newcommand{\tb}{\textbullet}
\DeclareMathOperator{\Rez}{Re}
\DeclareMathOperator{\Imz}{Im}
\DeclareMathOperator{\sgn}{sgn}
\newcommand{\hXi}{\widetilde{h}}
\newcommand{\hf}{h_{\:\!\!f}}
\newcommand{\NMkpm}{N_{M\!,\:\!k,\:\!\pm}}
\newcommand{\NXiMkpm}{\widetilde{N}_{M\!,\:\!k,\:\!\pm}}
\newcommand{\TXi}{\widetilde T}
\newcommand{\TMkpm}{T_{M\!,\:\!k,\:\!\pm}}
\newcommand{\TXiMkpm}{\widetilde{T}_{M\!,\:\!k,\:\!\pm}}
\newcommand{\XBpm}{X_{B\!\!\;,\:\!\pm}}
\newcommand{\XBp}{X_{B\!\!\;,\:\!+}}
\newcommand{\XBm}{X_{B\!\!\;,\:\!-}}
\newcommand{\XFkpm}{X_{F\!,\:\!k,\:\!\pm}}
\newcommand{\XFkp}{X_{F\!,\:\!k,\:\!+}}
\newcommand{\XFkm}{X_{F\!,\:\!k,\:\!-}}
\newcommand{\XFpm}{X_{F\!,\:\!\pm}}
\newcommand{\XFp}{X_{F\!,\:\!+}}
\newcommand{\XFm}{X_{F\!,\:\!-}}
\newcommand{\xFkpm}{x_{F\!,\:\!k,\:\!\pm}}
\newcommand{\XMkpm}{X_{M\!,\:\!k,\:\!\pm}}
\newcommand{\XMkp}{X_{M\!,\:\!k,\:\!+}}
\newcommand{\XMkm}{X_{M\!,\:\!k,\:\!-}}
\newcommand{\Xpdpm}{X_{p,d,\pm}}
\newcommand{\XWkpm}{X_{W\!,\:\!k,\:\!\pm}}
\newcommand{\xWkpm}{x_{W\!,\:\!k,\:\!\pm}}
\newcommand{\XWkp}{X_{W\!,\:\!k,\:\!+}}
\newcommand{\XWkm}{X_{W\!,\:\!k,\:\!-}}
\newcommand{\Xrkpm}{X_{\rh,\:\!k,\:\!\pm}}
\newcommand{\xrkpm}{x_{\rh,\:\!k,\:\!\pm}}
\newcommand{\Xrkp}{X_{\rh,\:\!k,\:\!+}}
\newcommand{\Xrkm}{X_{\rh,\:\!k,\:\!-}}
\newcommand{\yFkpm}{y_{F\!,\:\!k,\:\!\pm}}
\newcommand{\yWkpm}{y_{W\!,\:\!k,\:\!\pm}}
\newcommand{\yrkpm}{y_{\rh,\:\!k,\:\!\pm}}
\newcommand{\etaFkpm}{\eta_{F\!,\:\!k,\:\!\pm}}
\newcommand{\etaWkpm}{\eta_{W\!,\:\!k,\:\!\pm}}
\newcommand{\etarkpm}{\eta_{\rh,\:\!k,\:\!\pm}} 
\newcommand{\kaXi}{\widetilde{\kappa}}
\newcommand{\lak}{\la_{k,\pm}}
\newcommand{\XiBpm}{\varXi_{B\!\!\;,\:\!\pm}}
\newcommand{\XiFpm}{\varXi_{F\!,\:\!\pm}}
\newcommand{\XiFkpm}{\varXi_{F\!,\:\!k,\:\!\pm}}
\newcommand{\XiFkp}{\varXi_{F\!,\:\!k,\:\!+}}
\newcommand{\XiFkm}{\varXi_{F\!,\:\!k,\:\!-}}
\newcommand{\xiFkpm}{\xi_{F\!,\:\!k,\:\!\pm}}
\newcommand{\XiMkpm}{\varXi_{M\!,\:\!k,\:\!\pm}}
\newcommand{\Xipdpm}{\varXi_{p,d,\pm}}
\newcommand{\XiWkpm}{\varXi_{W\!,\:\!k,\:\!\pm}}
\newcommand{\xiWkpm}{\xi_{W\!,\:\!k,\:\!\pm}}
\newcommand{\Xirkpm}{\varXi_{\rh,\:\!k,\:\!\pm}}
\newcommand{\Xirkp}{\varXi_{\rh,\:\!k,\:\!+}}
\newcommand{\Xirkm}{\varXi_{\rh,\:\!k,\:\!-}}
\newcommand{\xirkpm}{\xi_{\rh,\:\!k,\:\!\pm}}
\newcommand{\phAkpE}{\ph_{A,\:\!k,\:\!+,\:\!1}}
\newcommand{\phAkpZ}{\ph_{A,\:\!k,\:\!+,\:\!2}}
\newcommand{\phAkmE}{\ph_{A,\:\!k,\:\!-,\:\!1}}
\newcommand{\phAkmZ}{\ph_{A,\:\!k,\:\!-,\:\!2}}
\newcommand{\phBkpm}{\ph_{B,\:\!k,\:\!\pm}}
\newcommand{\phBkp}{\ph_{B,\:\!k,\:\!+}}
\newcommand{\phBkm}{\ph_{B,\:\!k,\:\!-}}
\newcommand{\phFkpE}{\ph_{F\!,\:\!k,\:\!+,\:\!1}}
\newcommand{\phFkpZ}{\ph_{F\!,\:\!k,\:\!+,\:\!2}}
\newcommand{\phFkmE}{\ph_{F\!,\:\!k,\:\!-,\:\!1}}
\newcommand{\phFkmZ}{\ph_{F\!,\:\!k,\:\!-,\:\!2}}
\newcommand{\phpkpE}{\ph_{p,\:\!k,\:\!+,\:\!1}}
\newcommand{\phpkpZ}{\ph_{p,\:\!k,\:\!+,\:\!2}}
\newcommand{\phSkpm}{\ph_{S,\:\!k,\:\!\pm}}
\newcommand{\phSkp}{\ph_{S,\:\!k,\:\!+}}
\newcommand{\phSkm}{\ph_{S,\:\!k,\:\!-}}
\newcommand{\phSpm}{\ph_{S,\pm}}
\newcommand{\phSp}{\ph_{S,+}}
\newcommand{\phSm}{\ph_{S,-}}
\newcommand{\phXiAkpE}{\widetilde{\ph}_{A,\:\!k,\:\!+,\:\!1}}
\newcommand{\phXiAkpZ}{\widetilde{\ph}_{A,\:\!k,\:\!+,\:\!2}}
\newcommand{\phXiAkmE}{\widetilde{\ph}_{A,\:\!k,\:\!-,\:\!1}}
\newcommand{\phXiAkmZ}{\widetilde{\ph}_{A,\:\!k,\:\!-,\:\!2}}
\newcommand{\phXiBkpm}{\widetilde{\ph}_{B,\:\!k,\:\!\pm}}
\newcommand{\phXiBkp}{\widetilde{\ph}_{B,\:\!k,\:\!+}}
\newcommand{\phXiBkm}{\widetilde{\ph}_{B,\:\!k,\:\!-}}
\newcommand{\phXiFkpE}{\widetilde{\ph}_{F,\:\!k,\:\!+,\:\!1}}
\newcommand{\phXiFkpZ}{\widetilde{\ph}_{F,\:\!k,\:\!+,\:\!2}}
\newcommand{\phXiFkmE}{\widetilde{\ph}_{F,\:\!k,\:\!-,\:\!1}}
\newcommand{\phXiFkmZ}{\widetilde{\ph}_{F,\:\!k,\:\!-,\:\!2}}
\newcommand{\phXipkpE}{\widetilde{\ph}_{p,\:\!k,\:\!+,\:\!1}}
\newcommand{\phXipkpZ}{\widetilde{\ph}_{p,\:\!k,\:\!+,\:\!2}}
\newcommand{\phXiSkpm}{\widetilde{\ph}_{S,\:\!k,\:\!\pm}}
\begin{document}

\title{Determining the geometry of noncircular gears\\ for given transmission function} 
\author{Uwe B\"asel}
\date{} 
\maketitle
\thispagestyle{empty}
\vspace{-0.3cm}
\begin{abstract}
\noindent A pair of noncircular gears can be used to generate a strictly increasing continuous function $\psi(\varphi)$ whose derivative $\psi'(\varphi) = \mathrm{d}\psi(\varphi)/\mathrm{d}\varphi > 0$ is $2\pi/n$-periodic, where $\varphi$ and $\gamma = \psi(\varphi)$ are the angles of the opposite rotation directions of the drive gear and the driven gear, respectively, and $n \in \mathbb{N} \setminus \{0\}$.
In this paper, we determine the geometry of both gears for given transmission function $\psi(\varphi)$ when manufacturing with a rack-cutter having fillets.
All occurring functions are consistently derived as functions of the drive angle $\varphi$ and the function $\psi$. 
Throughout the paper, methods of complex algebra, including an external product, are used.
An effective algorithm for the calculation of the tooth geometries -- in general every tooth has its own shape -- is presented which limits the required numerical integrations to a minimum.
Simple criteria are developed for checking each tooth flank for undercut.
The base curves of both gears are derived, and it is shown that the tooth flanks are indeed the involutes of the corresponding base curve.  
All formulas for both gears are ready to use.
\\[0.2cm]
\textbf{2010 Mathematics Subject Classification:}
53A04, 
53A17, 
51N20, 
15A75  
\\[0.2cm]
\textbf{Keywords:} Noncircular gears, involute gears, transmission function, envelope, undercut, evolute, exterior product
\end{abstract}

\section{Introduction}

Toothed gears are used in many technical systems.
The most important and most frequently used type of gearing nowadays is the involute gearing dating back to Leonhard Euler (1765/67)~\cite{Euler:Supplementum}.
Involute gears have the advantages that they are easy to manufacture -- hence cost-effective -- and ensure a constant transmission ratio even if the centre distance is not exactly maintained.
Furthermore, they have good mating properties.
The working flanks of the teeth are the involutes of a circle, which is called base circle.
(Clearly, the base circle is the evolute of these involutes.)
For the manufacture of involute gears, straight flank tools are often used, for example so-called rack-cutters.
(The profile of such a rack-cutter is shown in Fig.\ \ref{Fig:Reference_profile01}.) 

In addition to the well-known circular gears for constant transmission ratios, there are also noncircular gears for non-constant transmission ratios, which also play an important role (see \textcite[Chapter 12]{Litvin&Fuentes}, \textcite{litvin2014noncircular}).
They are often used for the generation of motions with periodic transmission ratio, such as those required in printing, packaging, textile and many other types of machines.
Elliptical gears  (see e.g.\ \textcite[pp.\ 233-235, 242-244]{Wunderlich}, \textcite{Chen&Tsay}, \textcite[pp.\ 324-327]{Litvin&Fuentes},  \textcite{Litvin&Gonzalez-Perez&Yukishima&Fuentes&Hayasaka}, \cite[pp.\ 40-52]{litvin2014noncircular}), for example, can be applied for this purpose.
However, these only provide suitable rotational motions to a limited extent (see Fig.\ 172 in \cite[p.\ 235]{Wunderlich}).
It is better to specify the required motion (transmission function) and dimension the gear drive so that it realizes this motion.
\textcite{Tsay&Fong} use Fourier series approximation of centrodes and gear ratios in order to derive the tooth profiles of noncircular gears (see e.g. Example 3 on pp.\ 567-568).
The determination of the centrodes for given transmission functions is investigated in \cite{Litvin&Gonzalez-Perez&Fuentes&Hayasaka08}, especially in Section 4.
\textcite{Qiu&Deng} calculate the tooth profile for given transmission function when manufacturing with rack-cutter, but only for the left sides of the gear tooth spaces.
The calculations require case distinctions that are not discussed.

In this paper, we calculate the complete tooth profiles of the drive and the driven gear when manufacturing with a rack-cutter having fillets. All calculations are based on the transmission function, and
so all functions are consistently related to the rotation angle $\ph$ of the drive gear.
We use complex-valued functions of the real variable $\ph$ (see e.g.\ \textcite{Wunderlich}, \textcite{Mueller:Kinematik}, \textcite{Luck&Modler}, \textcite{Laczik&Zentay&Horvath}).
This allows a very simple and transparent derivation of the results.
Undercut conditions are obtained for both gears.
Equations for the base curves are derived.
A detailed algorithm summarizes all necessary calculation steps.


The paper is organized as follows: \mynobreakpar
\begin{itemize}[leftmargin=0.5cm]
\setlength{\itemsep}{-2pt}
\item In Section \ref{Sec:Preliminaries} we introduce some basic notation and foundations that we will use throughout the paper.
\item In Section \ref{Sec:Drive_gear} we derive a parametric equation for the flank curves of the drive gear in two different ways: 1) by obtaining the flank curves as envelopes of the rack-cutter flanks, applying an exterior product for complex numbers, 2) using the instantaneous centre of velocity of drive gear and rack-cutter.
Furthermore, we derive a parametric equation for the fillet curves.
\item Section \ref{Sec:Driven_gear} provides the calculations for the driven gear analogous to those of Section \ref{Sec:Drive_gear}. 
\item In Section \ref{Sec:Undercut}, undercut conditions are derived for the drive and the driven gear.
\item Parametric equations for the base curves are derived in Section \ref{Sec:The_base_curves}.
As a by-product the curvatures of the flank curves are obtained. 
\item Section \ref{Sec:Algorithm} is a detailed algorithm for the complete profiles of both gears.
\item Section \ref{Sec:Example} gives a practical example.
\item Calculation rules for the exterior product are to be found in Appendix \ref{App:Exterior_product}, a list of the used formula symbols in Appendix \ref{App:Formula_symbols}.
\end{itemize} 

\section{Preliminaries}
\label{Sec:Preliminaries}

We denote by $\ph$ the rotation angle of the drive gear, and by $\gamma$ the rotation angle of the driven gear.
Let
\beqn \label{Eq:psi}
  \psi \colon [0,2\pi] \;\rightarrow\; \R\,,\quad
  \ph \;\mapsto\; \psi(\ph)
\eeqn
be a strictly increasing continuous function whose derivative
\beqn \label{Eq:psi'}
  \psi'(\ph)
= \frac{\dd \psi(\ph)}{\dd \ph} > 0
\eeqn
is $2\pi/n$-periodic, $n \in \N \setminus \{0\}$.
Our aim is to determine drive and driven gear in such a way that the relationship between $\ph$ and $\gamma$ is given by
\beqn \label{Eq:gamma}
  \gamma = \psi(\ph)\,.
\eeqn
The function $\psi$ is called {\em transmission function}.

Let $\ph^*(t)$ denote the rotation angle of the drive gear as function of time $t$.
Then, the rotation angle of the driven gear as time function is given by
\beqn \label{Eq:psi^*}
  \psi^*(t) := \psi(\ph^*(t))\,.
\eeqn
With $\ph = \ph^*(t)$ we get
\beqn \label{Eq:dpsi^*/dt}
  \frac{\dd\psi^*(t)}{\dd t}
= \frac{\dd\psi(\ph^*(t))}{\dd t}
= \frac{\dd\psi}{\dd\ph}\,(\ph^*(t)) \cdot \frac{\dd\ph^*(t)}{\dd t}  
\eeqn
From \eqref{Eq:dpsi^*/dt} with \eqref{Eq:psi'} it follows that 
\beqn \label{Eq:psi'(phi(t))}
  \psi'(\ph^*(t))
= \frac{\dd\psi}{\dd\ph}\,(\ph^*(t))
= \frac{{\dot\psi}^*(t)}{{\dot\ph}^*(t)}\,,  
\eeqn
where the over-dot indicates time derivatives.
So one sees that $\psi'$ is the ratio of the angular velocities $\dot{\psi}^*$ and $\dot{\ph}^*$ of driven gear and drive gear, respectively.

Let $P$ be the instanteneous center of relative rotation of drive and driven gear (see Fig.\ \ref{Fig:Waelzkurven01}).
With the distance $a = \left|\overline{O_1 O_2}\right|$ between the pivot points $O_1$ and $O_2$ we get 
\beq
  r(\ph) + R(\ph) = a
  \qquad\mbox{and}\qquad
  \psi'(\ph)
= \frac{\left|\overline{O_1 P}\right|}{\left|\overline{O_2 P}\right|}
= \frac{r(\ph)}{R(\ph)}\,.  
\eeq
It follows that
\beqn \label{Eq:r_and_R}
  r(\ph)
= \frac{a \psi'(\ph)}{\ds 1 + \psi'(\ph)}
  \qquad\mbox{and}\qquad
  R(\ph)
= \frac{a}{\ds 1 + \psi'(\ph)}\,.  
\eeqn
The instantaneous center $P$ is in the rotating $x,y$-system given by $X_P(\ph) = r(\ph)\,\ee^{-\ii\ph}$, hence here the path of $P$ is the curve (centrode of the drive gear)
\beqn \label{Eq:X_P}
  X_P \colon [0,2\pi] \;\rightarrow\; \C\,,\quad
  \ph \;\mapsto\; X_P(\ph) = r(\ph)\,\ee^{-\ii\ph}\,.
\eeqn
In the rotating $\xi,\eta$-system, the path of $P$ is the curve (centrode of the driven gear)
\beqn \label{Eq:Xi_P}
  \varXi_P \colon [0,2\pi]  \;\rightarrow\; \C\,,\quad
  \ph \;\mapsto\; \varXi_P(\ph) = -R(\ph)\,\ee^{\ii \psi(\ph)}\,.
\eeqn
\begin{SCfigure}[][h]
  \includegraphics[scale=1]{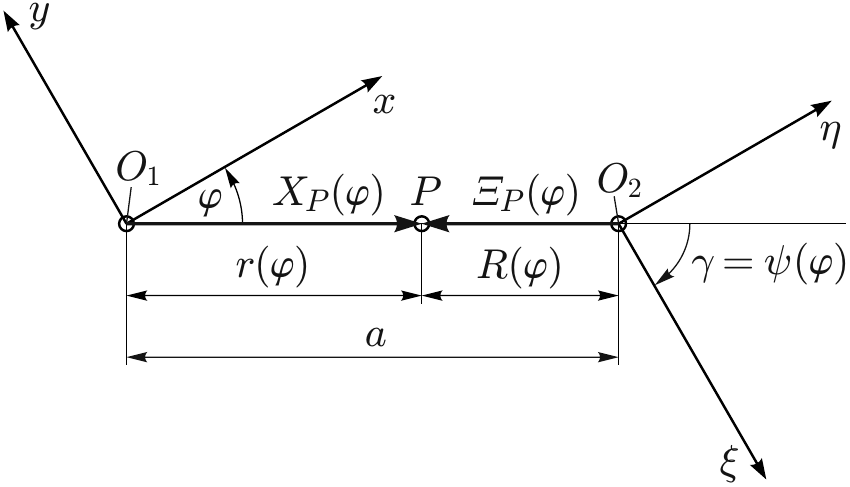}
  \caption{Generation of the centrodes $X_P$ and $\varXi_P$}
  \label{Fig:Waelzkurven01}
\end{SCfigure}

The arc length of $X_P$ between the points $X_P(\ph_0)$ and $X_P(\ph_1)$ is given by
\beqn \label{Eq:s}
  s(\ph_0,\ph_1)
= \int_{\ph_0}^{\ph_1} |X_P'(\ph)|\,\dd\ph\,,
\eeqn
where $X_P'(\ph) = \frac{\dd X_P(\ph)}{\dd\ph}$ is the tangent vector of $X_P$ at point $X_P(\ph)$ (momentary contact point of $X_P$ and $\varXi_P$).
From \eqref{Eq:X_P} one gets
\beq
  X_P'(\ph)
= r'(\ph)\ee^{-\ii\ph} - r(\ph)\ii\ee^{-\ii\ph}
= \left(r'(\ph) - \ii r(\ph)\right)\ee^{-\ii\ph}\,,  
\eeq
hence
\begin{align*}
  |X_P'(\ph)|
= {} & \left|\:\!\!\left(r'(\ph) - \ii r(\ph)\right)\ee^{-\ii\ph}\right|
= \left|r'(\ph) - \ii r(\ph)\right| \left|\ee^{-\ii\ph}\right|\\[0.1cm]
= {} & \left|r'(\ph) - \ii r(\ph)\right|
= \sqrt{r'^2(\ph) + r^2(\ph)}\,.
\end{align*}
From $r(\ph)$ in \eqref{Eq:r_and_R} we find
\beq
  r'(\ph)
= \frac{a \psi''(\ph)}{\ds (1 + \psi'(\ph))^2}\,.  
\eeq
It follows that
\beq 
  X_P'(\ph)
= \left(\frac{a \psi''(\ph)}{\ds (1 + \psi'(\ph))^2}
  - \frac{a \ii \psi'(\ph)}{\ds 1 + \psi'(\ph)}\right) \ee^{-\ii\ph}
= \frac{a \psi''(\ph) - a \ii \psi'(\ph)(1 + \psi'(\ph))}{\ds (1 + \psi'(\ph))^2}\,\ee^{-\ii\ph}   
\eeq
and
\beqn \label{Eq:|X_P'|-1}
  |X_P'(\ph)|
= \sqrt{\frac{a^2 \psi''^2(\ph)}{(1 + \psi'(\ph))^4} + \frac{a^2 \psi'^2(\ph)}{(1 + \psi'(\ph))^2}}
= \frac{a w(\ph)}{(1 + \psi'(\ph))^2}
\eeqn
with
\beqn \label{Eq:w}
  w(\ph)
:= \sqrt{\strut\ds \psi''^2(\ph) + \psi'^2(\ph)\left(1 + \psi'(\ph)\right)^2}\,.   
\eeqn
So we have found
\beq
  s(\ph_0,\ph_1)
= a I(\ph_0,\ph_1)\,,
\eeq
where
\beqn \label{Eq:I}
  I(\ph_0,\ph_1)
:= \int_{\ph_0}^{\ph_1} \frac{w(\ph)}{(1 + \psi'(\ph))^2}\,\dd\ph\,.  
\eeqn
The perimeter $u$ of $X_P$ ist given by $u = aI(0,2\pi)$.
We have
\beq
  u = aI(0,2\pi) = z_1 p = z_1 \pi m\,, 
\eeq
where $z_1$ is the number of teeth of the drive gear, $p$ the tooth pitch and $m$ the module (see Fig.\ \ref{Fig:Reference_profile01}), hence
\beqn \label{Eq:a}
  a
= \frac{z_1 \pi m}{I(0,2\pi)}\,.  
\eeqn

\begin{SCfigure}[][h]
  \includegraphics[scale=1]{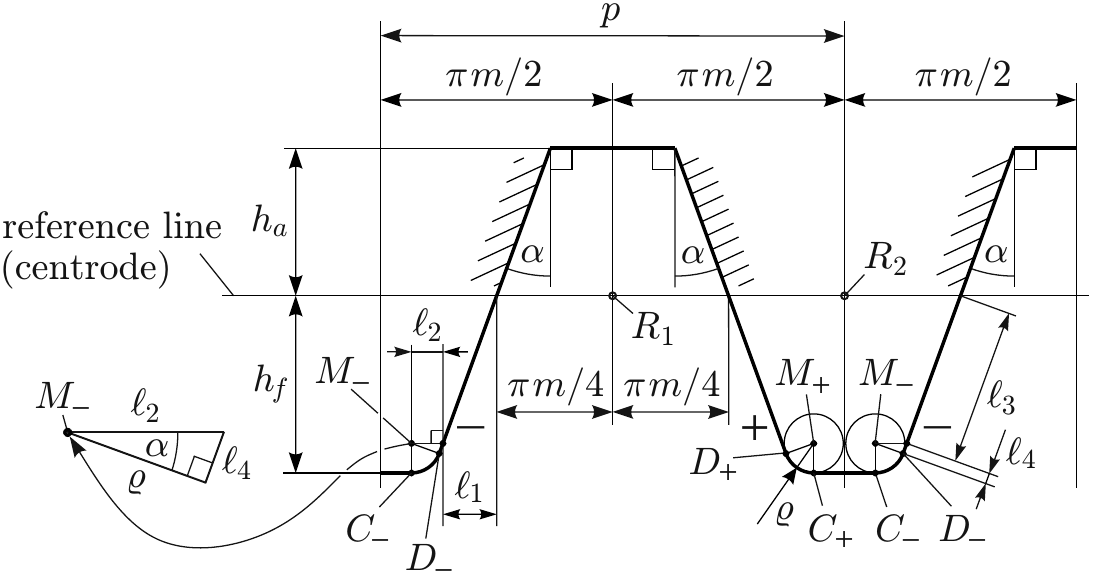}
  \caption{Reference profile of the rack-cutter}
  \label{Fig:Reference_profile01}
\end{SCfigure}

Fig.\ \ref{Fig:Definition_flanks01} is crucial for all further considerations and calculations.
The reference line (centrode) of the rack-cutter is always tangent to the centrodes $X_P$ and $\varXi_P$ at the intanteneous rotation center $P$.
During the rotations of $X_P$ and $\varXi_P$ about $O_1$ and $O_2$, respectively, the reference line is shifted between $X_P$ and $\varXi_P$, where it generally rotates around $P$. 
Rigidly connected to the reference line are the two flanks marked with ``$+$'' and ``$-$''.
(Note that $P$ in general is not in the middle between both flanks.)
The relative motion between one of the centrodes and the reference line is a pure rolling.
The flanks ``$+$'' and ``$-$'' produce the corresponding flank curves ``$+$'' and ``$-$'', respectively, of \underline{tooth $k$ of the drive gear} ($\XFkpm$ in Fig.\ \ref{Fig:Curves_for_tooth_k}) and \underline{tooth space $k$ of the driven gear}.

{\bf Convention:} In order to abbreviate the text, the following convention will be used throughout the paper.
In formula symbols and formulas that contain ``$\pm$'' and sometimes also ``$\mp$'', the upper sign is for the ``$+$''-flank and the lower sign for the ``$-$''-flank.
Example: The sentences ``The point $M_\pm$ in Fig.\ \ref{Fig:Reference_profile01} generates the curve $\XMkpm$ in Fig.\ \ref{Fig:Curves_for_tooth_k}.'' and ``The points $M_\pm$ in Fig.\ \ref{Fig:Reference_profile01} generate the curves $\XMkpm$ in Fig.\ \ref{Fig:Curves_for_tooth_k}.'' are abbreviations for the sentence ``The points $M_+$ and $M_-$ in Fig.\ \ref{Fig:Reference_profile01} generate the curves $\XMkp$ and $\XMkm$, respectively, in Fig.\ \ref{Fig:Curves_for_tooth_k}.'' (Since Fig.\ \ref{Fig:Curves_for_tooth_k} shows the curves for a tooth of the drive gear, the point $M_-$ is the left one in Fig.\ \ref{Fig:Reference_profile01}.)           

\begin{SCfigure}[][h]
  \includegraphics[scale=1]{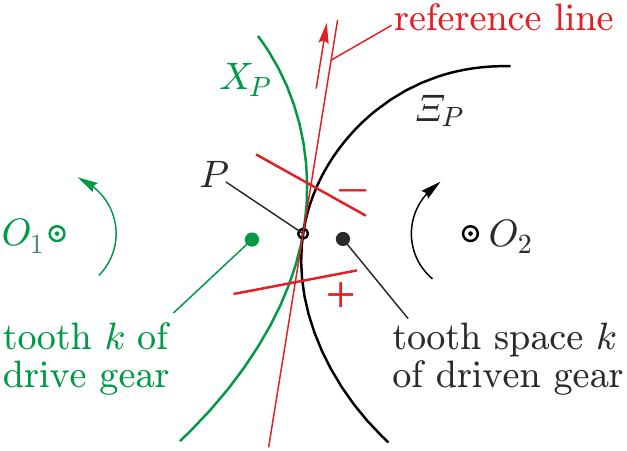}
  \caption{Definition of the flanks ``$+$'' and~``$-$''}
  \label{Fig:Definition_flanks01}
\end{SCfigure}

\section{Drive gear}
\label{Sec:Drive_gear}

The tangent unit vector $T$ at point $X_P(\ph)$ of $X_P$ (see Fig.\ \ref{Fig:Tooth_generation01a}) is given by
\beqn \label{Eq:T}
  T(\ph)
:= \frac{X_P'(\ph)}{|X_P'(\ph)|}
= \frac{\psi''(\ph) - \ii \psi'(\ph)(1 + \psi'(\ph))}{w(\ph)}\,\ee^{-\ii\ph}\,.
\eeqn
Splitting \eqref{Eq:T} into real and imaginary part gives
\beqn \label{Eq:tx_and_ty}
\left.
\begin{aligned}
  t_x(\ph) := \Rez T(\ph)   
= {} & {\phantom{-}\frac{\psi''(\ph)\cos{\ph} - \psi'(\ph)(1 + \psi'(\ph))\sin\ph}{w(\ph)}}\,,\\[0.1cm]
  t_y(\ph) := \Imz T(\ph)
= {} & {-\frac{\psi''(\ph)\sin{\ph} + \psi'(\ph)(1 + \psi'(\ph))\cos\ph}{w(\ph)}}\,.      
\end{aligned}
\;\right\}
\eeqn
We denote the value of $\ph$ in the middle of the tooth $k$ to be generated by $\chi(k)$, $k = 1,\ldots,z_1$.
One gets these values -- in general numerically -- from
\beqn \label{Eq:chi(k)}
  a I(0,\chi(k)) = (k-1) \pi m
\eeqn
with $I(\cdot\,,\cdot)$ according to \eqref{Eq:I}, and $a$ according to \eqref{Eq:a}.

\begin{SCfigure}[][h]
  \includegraphics[scale=1]{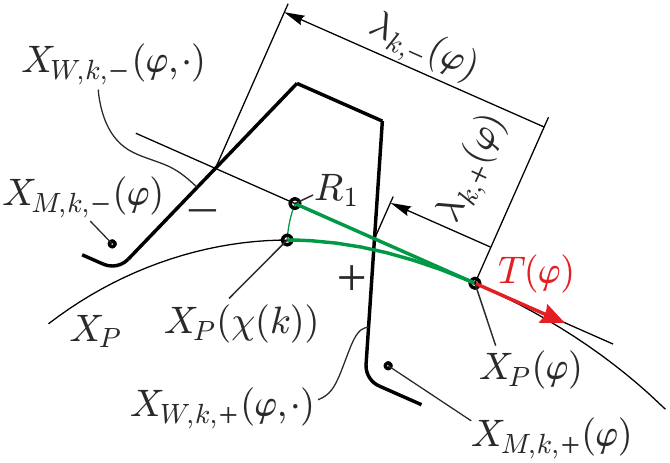}
  \caption{Motion of the rack-cutter during the generation of tooth $k$}
  \label{Fig:Tooth_generation01a}   
\end{SCfigure}

Now we determine a parametric equation for the rack-cutter flank line $\XWkpm$ (see Fig.\ \ref{Fig:Tooth_generation01a}) that generates tooth flank curve $\XFkpm$, $k = 1,2,\ldots,z_1$ (see Fig.\ \ref{Fig:Curves_for_tooth_k}). 
If $\ph = \chi(k)$, then the point $R_1$ of the rack-cutter coincides with the point $X_P(\chi(k))$ of the centrode $X_P$ (see Fig.\ \ref{Fig:Tooth_generation01a}).
The vectors $T(\ph)$ and $\vv{X_P(\ph)R_1}$ point into the same direction if $I(\chi(k),\ph) < 0$, which is not the case in Fig.\ \ref{Fig:Tooth_generation01a}.
The signed arc length between the points $X_P(\chi(k))$ and $X_P(\ph)$ ($=$ signed distance between $R_1$ and $X_P(\ph)$) is equal to $aI(\chi(k),\ph)$.
The signed distances $\la_{k,+}(\ph)$ and $\la_{k,-}(\ph)$ are given by 
\beqn \label{Eq:lambda_k}
  \lak(\ph)
:= \pm\frac{\pi m}{4} - a I(\chi(k),\ph)\,.   
\eeqn
$\lak(\ph) > 0$ indicates that $T(\ph)$ and the arrow of $\lak(\ph)$ have the same direction.   
So we have the following parametric equation for the rack-cutter flank line $\XWkpm$
\beqn \label{Eq:XWkpm}
\begin{aligned} 
  \XWkpm(\ph,\mu)
= {} & X_P(\ph) + \lak(\ph)\,T(\ph)
  + \mu\,T(\ph)\,\ee^{\ii\left(\frac{\pi}{2}\,\pm\,\alpha\right)}\\[0.05cm]
= {} & X_P(\ph) + \lak(\ph)\,T(\ph)
  + \mu\,T(\ph)\,\ii\ee^{\pm\ii\alpha}\\[0.05cm]
= {} & X_P(\ph) + \left(\lak(\ph)+\mu\ii\ee^{\pm\ii\alpha}\right) T(\ph)\,,
  \quad \mu \in \R\,.    
\end{aligned}
\eeqn
Splitting \eqref{Eq:XWkpm} into real and imaginary part gives
\beqn \label{Eq:xWk_and_yWk}
\left.
\begin{aligned}
  \xWkpm(\ph,\mu)
= {} & \hspace{0.26cm}r(\ph)\cos(\ph) + \lak(\ph)\,t_x(\ph)
  + \mu\,(\mp t_x(\ph)\sin\alpha - t_y(\ph)\cos\alpha)\,,\\[0.1cm]
  \yWkpm(\ph,\mu)
= {} & {-r(\ph)\sin(\ph)} + \lak(\ph)\,t_y(\ph)
  + \mu\,(\mp t_y(\ph)\sin\alpha + t_x(\ph)\cos\alpha)\,.   
\end{aligned}
\;\right\}
\eeqn
Now, we determine a parametric equation of the tooth flank curves $\XFkpm$.

\begin{SCfigure}[][h]
  \includegraphics[scale=1]{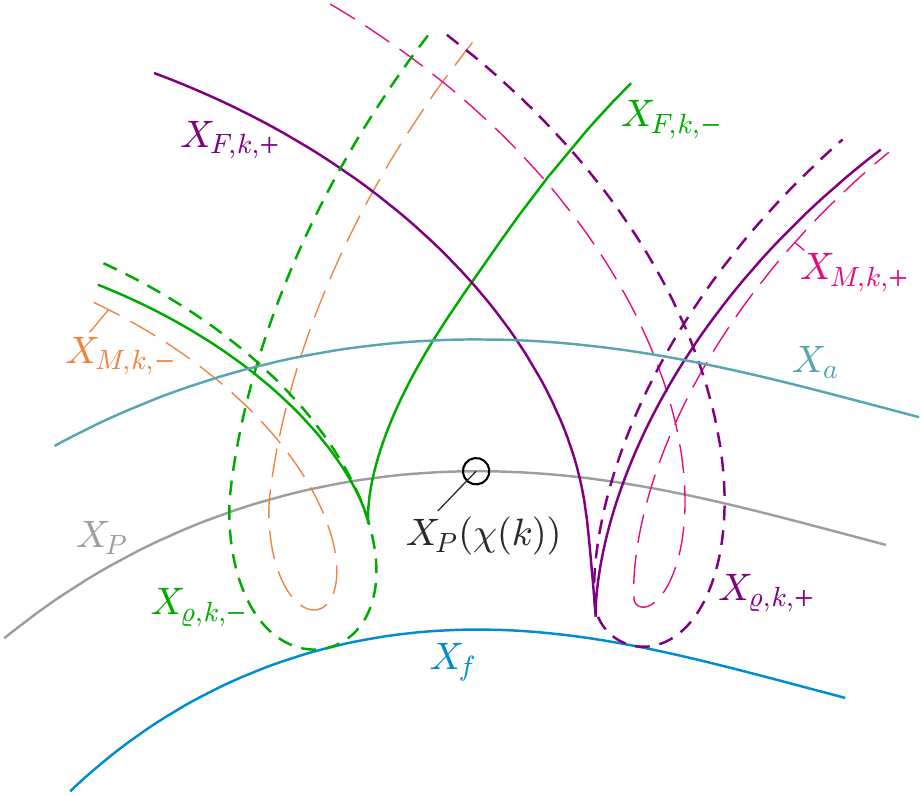}
  \caption{Curves for tooth $k$ (example):
    \vspace*{-0.3cm}
	\newline $\XFkpm$ flank curves,
	\newline $\XMkpm$ curves of $M_\pm$
	\newline\hspace*{1.15cm} (see Fig.\ \ref{Fig:Reference_profile01}),
	\newline $\Xrkpm$ fillet curves,
	\newline $X_P$ centrode,
	\newline $X_a$ addendum curve,
	\newline $X_f$ dedendum curve}
  \label{Fig:Curves_for_tooth_k}   
\end{SCfigure}

\begin{thm} \label{Thm:X_F}
A parametric equation of the flank curves $\XFkpm$ {\em(}see Fig.\ {\em \ref{Fig:Curves_for_tooth_k})} of tooth $k$, $k = 1,2,\ldots,z_1$, of the drive gear is given by
\beqn \label{Eq:XFkpm}
  \XFkpm(\ph)
= X_P(\ph) + \lak(\ph)\,T(\ph)\,\ee^{\pm\ii\alpha}\cos\alpha\,,
\eeqn
where
\begin{gather*}
  X_P(\ph)
= r(\ph)\,\ee^{-\ii\ph}\,,\qquad 
  T(\ph)
= \frac{\psi''(\ph) - \ii \psi'(\ph)(1 + \psi'(\ph))}{w(\ph)}\,\ee^{-\ii\ph}\,,\\[0.1cm]
  \lak(\ph)
= \pm\frac{\pi m}{4} - a \int_{\chi(k)}^\ph \frac{w(\phi)}{\ds (1 + \psi'(\phi))^2}\,\dd\phi\,.   
\end{gather*}
with
\beq
  r(\ph)
= \frac{a \psi'(\ph)}{\ds 1 + \psi'(\ph)}\,,\qquad
  w(\ph)
= \sqrt{\strut\ds \psi''^2(\ph) + \psi'^2(\ph)\left(1 + \psi'(\ph)\right)^2}\,,
\eeq
and $\chi(k)$ according to \eqref{Eq:chi(k)}.
\end{thm}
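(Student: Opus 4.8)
The plan is to obtain $\XFkpm$ as the \emph{envelope} of the one-parameter family of rack-cutter flank lines $\XWkpm(\ph,\mu)$ from \eqref{Eq:XWkpm}, treating $\ph$ as the family parameter and $\mu$ as the parameter running along each individual line. Writing the flank line as $\XWkpm(\ph,\mu) = X_P(\ph) + \bigl(\lak(\ph) + \mu\,\ii\ee^{\pm\ii\alpha}\bigr)T(\ph)$, the envelope is the locus where the two partial velocity vectors $\partial_\ph \XWkpm$ and $\partial_\mu \XWkpm$ become linearly dependent. Using the exterior product for complex numbers from Appendix~\ref{App:Exterior_product}, this is exactly the condition $\partial_\ph \XWkpm \wedge \partial_\mu \XWkpm = 0$.

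First I would compute the two partials. Since $\mu$ enters linearly, $\partial_\mu \XWkpm = \ii\ee^{\pm\ii\alpha}\,T(\ph)$. For the $\ph$-derivative I would exploit the pure-rolling relation already built into the construction: because $T = X_P'/|X_P'|$ we have $X_P'(\ph) = |X_P'(\ph)|\,T(\ph)$, while differentiating $\lak$ under the integral sign gives $\lak'(\ph) = -a\,w(\ph)/(1+\psi'(\ph))^2 = -|X_P'(\ph)|$ by \eqref{Eq:|X_P'|-1}. Hence the terms $X_P'(\ph) + \lak'(\ph)\,T(\ph)$ cancel identically, leaving $\partial_\ph \XWkpm = \bigl(\lak(\ph)+\mu\,\ii\ee^{\pm\ii\alpha}\bigr)T'(\ph)$. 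This cancellation is the step that makes everything work, and I expect it to be the main point to get right: it encodes that the reference line rolls on $X_P$ without slipping, so that the only surviving motion is the rotation of the tangent direction.

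Next I would use that $T$ has constant modulus $1$, so $T'(\ph)$ is a purely imaginary scalar multiple of $T(\ph)$, i.e.\ $T' \perp T$. Feeding the two partials into the exterior product and cancelling the common factor $T$ reduces the envelope condition to $\Rez\!\bigl(\ii\ee^{\pm\ii\alpha}\,\overline{(\lak+\mu\,\ii\ee^{\pm\ii\alpha})}\bigr)=0$, which is linear in $\mu$ and solves to $\mu = \pm\lak(\ph)\sin\alpha$. The only care needed here is tracking the $\pm$ signs consistently through the complex conjugation.

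Finally I would substitute this value of $\mu$ back into \eqref{Eq:XWkpm}. The scalar coefficient of $T(\ph)$ becomes $\lak + (\pm\lak\sin\alpha)\,\ii\ee^{\pm\ii\alpha} = \lak\bigl(1 \pm \ii\sin\alpha\,\ee^{\pm\ii\alpha}\bigr)$, and the elementary identity $1 \pm \ii\sin\alpha\,\ee^{\pm\ii\alpha} = \cos\alpha\,\ee^{\pm\ii\alpha}$ collapses this to $\lak(\ph)\,\ee^{\pm\ii\alpha}\cos\alpha$, which is precisely \eqref{Eq:XFkpm}. The accompanying expressions for $X_P$, $T$, $\lak$, $r$ and $w$ are then just the definitions \eqref{Eq:X_P}, \eqref{Eq:T}, \eqref{Eq:lambda_k}, \eqref{Eq:r_and_R} and \eqref{Eq:w} restated.
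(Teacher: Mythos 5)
Your proof is correct, and it takes a genuinely different route from the paper's official proof of Theorem \ref{Thm:X_F}, although it lands on the same final substitution as the paper's alternative ``short proof''. The paper computes the envelope in \emph{implicit} form: it writes each cutter flank line as $[A(\ph)-B(\ph),X]=[A(\ph),B(\ph)]$ (see \eqref{Eq:Family}), adjoins the $\ph$-derivative of that equation, and solves the resulting $2\times 2$ linear system for the envelope point $X$ by Cramer's rule \eqref{Eq:X-1}; this forces a long chain of exterior-product expansions of $[A,B]$ and $[A,B]'$ before everything collapses. You instead apply the parametric (Jacobian-degeneracy) criterion directly to the map $(\ph,\mu)\mapsto\XWkpm(\ph,\mu)$: the rolling cancellation $X_P'(\ph)+\lak'(\ph)\,T(\ph)=0$, which is exactly \eqref{Eq:lambda_k'=-|X_P'|}, reduces $\partial_\ph\XWkpm$ to a multiple of $T'$, and since $T'=\ii\,h\,T$ with $h$ real (\eqref{Eq:T'-2}, forced by $|T|\equiv 1$), the condition $[\partial_\ph\XWkpm,\partial_\mu\XWkpm]=0$ becomes linear in $\mu$ with solution $\mu=\pm\lak(\ph)\sin\alpha$ --- precisely the value \eqref{Eq:mu_plus_minus} that the paper's short proof obtains geometrically from the right triangle at the instantaneous centre of velocity. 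Your route buys a much shorter computation than the official envelope proof (no Cramer's rule, and $\mu$ is found explicitly, which the official proof never does), while remaining purely analytic, unlike the short proof, which imports the kinematic fact that the flank normal at the generated point passes through $X_P(\ph)$. One shared caveat: your argument divides by $h(\ph)=[T,T']$ when solving for $\mu$, hence tacitly assumes $h(\ph)\neq 0$; the paper's proof divides by the same quantity in \eqref{Eq:X-2} without comment, so this is no gap relative to the paper's own standard, but it would be worth a remark that the envelope formula requires nonvanishing curvature of the centrode at the point in question.
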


\begin{proof}
Choosing two points (complex numbers) $A = A(\ph)$, $B = B(\ph)$ of the rack-cutter flank line $\XWkp(\ph,\cdot)$ or $\XWkm(\ph,\cdot)$, the equation of this line can be written as
\beqn \label{Eq:Family}
  [A(\ph)-B(\ph),X] = [A(\ph),B(\ph)]
\eeqn
(cp.\ \eqref{Eq:EOL}).
A tooth flank curve is the envelope (see e.g.\ \textcite[pp.\ 94-98]{Baule1}) of the family \eqref{Eq:Family} of cutter flank lines with $\ph$ varying in a suitable interval.
The equation of the envelope can be obtained from the linear equation system 
\beq
\left.
\begin{array}{r@{\;=\;}l}
  [A-B,X] & [A,B]\,,\\[0.1cm]
  [(A-B)',X] & [A,B]'
\end{array}  
\right\}
\eeq
which, according to \eqref{Eq:[A,B]-Def}, can be written as 
\beqn \label{Eq:CES}
\left.
\begin{array}{l@{\,}l@{\:-\:}l@{\,}c@{\;=\;}l}
  \overline{(A-B)}  & X & (A-B)  & \overline{X}  & 2\ii[A,B]\:,\\[0.1cm] 
  \overline{(A-B)'} & X & (A-B)' & \overline{X} & 2\ii[A,B]'.
\end{array}
\right\}
\eeqn
The solution of \eqref{Eq:CES} is given by
\begin{align} \label{Eq:X-1}
  X 
= {} & 
  \frac
	{\left|\!\begin{array}{ll}
	  2\ii[A,B]  & -(A-B)\\
	  2\ii[A,B]' & -(A-B)' 	
	 \end{array}\!\right|}
	{\left|\!\begin{array}{ll}
	  \,\overline{(A-B)}  & -(A-B)\\
	  \,\overline{(A-B)'} & -(A-B)'
	 \end{array}\!\right|}
= 2\ii\,
  \frac
	{[A,B]\,(A-B)' - [A,B]'\,(A-B)}
	{\,\overline{(A-B)}\,(A-B)' - (A-B)\,\overline{(A-B)'}\,}\nonumber\\[0.2cm]
= {} &
  \frac
	{[A,B]\,(A-B)' - [A,B]'\,(A-B)}
	{[A-B,(A-B)']}\,.		 	   	    
\end{align}
Let us choose
\begin{align*}
  A = A(\ph)
:= {} & \XWkpm(\ph,1)
= X_P(\ph) + \lak(\ph)\,T(\ph) + \ii\,\ee^{\pm\ii\alpha}\,T(\ph)\,,\\[0.05cm] 
  B = B(\ph)
:= {} & \XWkpm(\ph,0)
= X_P(\ph) + \lak(\ph)\,T(\ph)\,,
\end{align*}
then we have
\beq
  A - B = \ii\,\ee^{\pm\ii\alpha}\,T(\ph)\,,\qquad
  (A - B)' = \ii\,\ee^{\pm\ii\alpha}\,T'(\ph)\,.
\eeq
From \eqref{Eq:Rule3}, it follows that
\begin{align*}
  [A-B,(A-B)']
= {} & [\ii\,\ee^{\pm\ii\alpha}\,T(\ph),\ii\,\ee^{\pm\ii\alpha}\,T'(\ph)]
= \ii\,\ee^{\pm\ii\alpha}\,\overline{\ii\,\ee^{\pm\ii\alpha}}\: [T(\ph),T'(\ph)]\\[0.05cm]
= {} & \ee^{\pm\ii\alpha}\,\ee^{\mp\ii\alpha}\: [T(\ph),T'(\ph)]
= [T(\ph),T'(\ph)]\,. 
\end{align*}
In the following we respectively write $X_P$, $T$, $\la$, $w$ and $\psi$ instead of $X_P(\ph)$, $T(\ph)$, $\lak(\ph)$, $w(\ph)$ and $\psi(\ph)$. 
Applying \eqref{Eq:Rules1}, \eqref{Eq:Rule1}, \eqref{Eq:Rule2} and \eqref{Eq:Rule3}, we obtain
\begin{align*} 
  [A,B]
= {} & [X_P + \la\,T + \ii\,\ee^{\pm\ii\alpha}\,T,\, X_P + \la\,T]\nonumber\\[0.05cm]  
= {} & [X_P + \la\,T,\, X_P + \la\,T] + [\ii\,\ee^{\pm\ii\alpha}\,T,\, X_P + \la\,T]\nonumber\\[0.05cm]
= {} & [\ii\,\ee^{\pm\ii\alpha}\,T,\, X_P] + \la\,[\ii\,\ee^{\pm\ii\alpha}\,T,\, T]
= [\ii\,\ee^{\pm\ii\alpha}\,T,\, X_P] + \la\,T\,\overline{T\,}[\ii\,\ee^{\pm\ii\alpha},\, 1]\nonumber\\[0.05cm]
= {} & [\ii\,\ee^{\pm\ii\alpha}\,T,\, X_P] - \la\,[1,\,\ii\,\ee^{\pm\ii\alpha}]
= [\ii\,\ee^{\pm\ii\alpha}\,T,\, X_P] - \la\Imz(\ii\,\ee^{\pm\ii\alpha})\nonumber\\[0.05cm]
= {} & [\ii\,\ee^{\pm\ii\alpha}\,T,\, X_P] - \la\cos\alpha\,,
\end{align*}
hence, with \eqref{Eq:T}, \eqref{Eq:Rules1}, \eqref{Eq:Rule1}, \eqref{Eq:Rule2}, \eqref{Eq:Rule3} and \eqref{Eq:[A,B]'},
\begin{align} \label{Eq:[A,B]'-1}
  [A,B]'
= {} & [\ii\,\ee^{\pm\ii\alpha}\,T',\, X_P]
  + [\ii\,\ee^{\pm\ii\alpha}\,T,\, X_P'] - \la'\cos\alpha\nonumber\displaybreak[0]\\[0.05cm]
= {} & [\ii\,\ee^{\pm\ii\alpha}\,T',\, X_P]
  + [\ii\,\ee^{\pm\ii\alpha}\,T,\, |X_P'|\,T] - \la'\cos\alpha\nonumber\displaybreak[0]\\[0.05cm]
= {} & [\ii\,\ee^{\pm\ii\alpha}\,T',\, X_P]
  + |X_P'|\,T\,\overline{T}\,[\ii\,\ee^{\pm\ii\alpha},\, 1] - \la'\cos\alpha\nonumber\\[0.05cm]    
= {} & [\ii\,\ee^{\pm\ii\alpha}\,T',\, X_P]
  - |X_P'|\,[1,\,\ii\,\ee^{\pm\ii\alpha}] - \la'\cos\alpha\nonumber\\[0.05cm]
= {} & [\ii\,\ee^{\pm\ii\alpha}\,T',\, X_P]
  - |X_P'|\cos\alpha - \la'\cos\alpha\,.
\end{align}
For the derivative of $\lak(\ph)$ from \eqref{Eq:lambda_k} we find 
\beqn \label{Eq:lambda_k'}
  \lak'(\ph)
= \frac{\dd}{\dd\ph}\left(\pm\frac{\pi m}{4}
  - a \int_{\chi(k)}^\ph \frac{w(\phi)}{(1 + \psi'(\phi))^2}\,\dd\phi
  \right)
= -\frac{aw(\ph)}{(1 + \psi'(\ph))^2}\,.    
\eeqn
Comparing \eqref{Eq:lambda_k'} with \eqref{Eq:|X_P'|-1} one sees that
\beqn \label{Eq:lambda_k'=-|X_P'|}
  \lak'(\ph)
= -|X_P'(\ph)|  
\eeqn
which simplifies \eqref{Eq:[A,B]'-1} to
\beq
  [A,B]'
= [\ii\,\ee^{\pm\ii\alpha}\,T',\, X_P]\,.
\eeq
Now, \eqref{Eq:X-1} may be written as
\beqn \label{Eq:X-2}
  X
= \frac
	{([\ii\,\ee^{\pm\ii\alpha}\,T,\, X_P] - \la\cos\alpha)\,T' - [\ii\,\ee^{\pm\ii\alpha}\,T',\, X_P]\,T}
	{\tn{$[T,T']$}}
  \,\ii\,\ee^{\pm\ii\alpha}\,. 
\eeqn
It is necessary to determine the derivative of $T(\ph)$.
We write \eqref{Eq:T} as
\beq
  T w \ee^{\ii\ph} = \psi'' - \ii \psi' - \ii \psi'^2\,,
\eeq
and get
\beq
  T' w \ee^{\ii\ph} + T w' \ee^{\ii\ph} + T w \ii \ee^{\ii\ph}
= \psi^{(3)} - \ii \psi'' - 2 \ii \psi' \psi''\,,  
\eeq
hence
\begin{align} \label{Eq:T'-1}
  T'
= {} & \frac{\left(\psi^{(3)} - \ii\psi'' - 2\ii\psi'\psi''\right)\ee^{-\ii\ph} - Tw' - \ii Tw}{w}
  \nonumber\displaybreak[0]\\[0.05cm]  
= {} & \frac{T}{w^2}\,
  \bigg(w\,\frac{\psi^{(3)} - \ii\psi'' - 2\ii\psi'\psi''}{T}\,\ee^{-\ii\ph} - ww' - \ii w^2\bigg)
  \nonumber\\[0.05cm]
= {} & \frac{T}{w^2}\,
  \bigg(w^2\,\frac{\psi^{(3)} - \ii\psi'' - 2\ii\psi'\psi''}{\psi'' - \ii\psi'(1 + \psi')}
  - ww' - \ii w^2\bigg)\nonumber\\[0.05cm]
= {} & \frac{T}{w^2}\,
  \bigg(w^2\,\frac{\psi^{(3)} - \ii\psi'' - 2\ii\psi'\psi''}{\psi'' - \ii\psi'(1 + \psi')}\,
  \frac{\psi'' + \ii\psi'(1 + \psi')}{\psi'' + \ii\psi'(1 + \psi')} - ww' - \ii w^2\bigg)
  \nonumber\\[0.05cm]
= {} & \frac{T}{w^2}
  \left[\left(\psi^{(3)} - \ii\psi'' - 2\ii\psi'\psi''\right)
  \left(\psi'' + \ii\psi' + \ii\psi'^2\right) - ww' - \ii w^2\right].
\end{align}
With
\beq
  w'
= \frac{\psi''\left(\psi'+3\psi'^2+2\psi'^3+\psi^{(3)}\right)}{w}\,,  
\eeq
\eqref{Eq:T'-1} becomes
\begin{align*}
  T'
= {} & T\,w^{-2}
  \left[\left(\psi'' + \ii\psi' + \ii\psi'^2\right)
  \left(\psi^{(3)} - \ii\psi'' - 2\ii\psi'\psi''\right)
  - \psi''\left(\psi' + 3\psi'^2 + 2\psi'^3 + \psi^{(3)}\right)\right.\\ 
& \qquad\quad\; - \ii\left(\psi''^2 + \psi'^2 + 2\psi'^3 + \psi'^4\right)\Big]\displaybreak[0]\\
= {} & T\,w^{-2}\,
  \Big(\psi'' \psi^{(3)} - \ii\psi''^2 - 2\ii\psi'\psi''^2
  + \ii\psi'\psi^{(3)} + \psi'\psi'' + 2\psi'^2\psi''
  + \ii\psi'^2\psi^{(3)} + \psi'^2\psi'' + 2\psi'^3\psi''\\ 
& \qquad\quad\; - \psi''\psi' - 3\psi''\psi'^2 - 2\psi''\psi'^3 - \psi''\psi^{(3)}
  - \ii\psi''^2 - \ii\psi'^2 - 2\ii\psi'^3 - \ii\psi'^4\Big)\displaybreak[0]\\
= {} & \ii\,T\,w^{-2}\,
  \Big(\psi'\psi^{(3)} + \psi'^2\psi^{(3)} - \psi'^2 - 2\psi'^3 - \psi'^4
  - 2\psi''^2 - 2\psi'\psi''^2\Big)\displaybreak[0]\\
= {} & \ii\,T\,w^{-2}
  \left[(1 + \psi')\psi'\psi^{(3)} - (1 + \psi')^2\psi'^2 
  - 2(1 + \psi')\psi''^2\right]\\
= {} & \ii\,T\,w^{-2} (1+\psi')
  \left[\psi'\left(\psi^{(3)} - \psi' - \psi'^2\right) - 2\psi''^2\right].
\end{align*}
So, with the function
\beqn \label{Eq:h}
\begin{aligned}
  h\, \colon [0,2\pi]
\;\rightarrow\; {} & \R\\ 
  \ph
\;\mapsto\; {} & h(\ph)
:= \frac{(1+\psi'(\ph))\left[\psi'(\ph)\left(\psi^{(3)}(\ph) - \psi'(\ph) - \psi'^2(\ph)\right) 
  - 2\psi''^2(\ph)\right]}
  {w^2(\ph)}\,,  
\end{aligned}
\eeqn
we have found
\beqn \label{Eq:T'-2}
  T'(\ph)
= \ii\,h(\ph)\,T(\ph)\,.  
\eeqn
Applying \eqref{Eq:T'-2}, we get
\beq
  \left[T,T'\right]
= [T,\, \ii h T]  
= h\,[T,\ii T]  
= h\,T\,\overline{T}\,[1,\ii]
= h
\eeq
and
\beq
  [\ii\,\ee^{\pm\ii\alpha}\,T',\, X_P]
= [\ii\,\ee^{\pm\ii\alpha}\,\ii h T,\, X_P]  
= [-\ee^{\pm\ii\alpha}\,h T,\, X_P]
= -h\,[\ee^{\pm\ii\alpha}\,T,\, X_P]\,,  
\eeq
hence \eqref{Eq:X-2} becomes
\begin{align*}
  X
= {} & \frac{([\ii\,\ee^{\pm\ii\alpha}\,T,\, X_P] - \la\cos\alpha)\,\ii h T
  + h\,[\ee^{\pm\ii\alpha}\,T,\, X_P]\,T}{h}\,
  \ii\,\ee^{\pm\ii\alpha}\displaybreak[0]\\[0.05cm]
= {} & \left\{[\ii\,\ee^{\pm\ii\alpha}\,T,\, X_P]\,\ii\,T - \ii\,\la\,T\cos\alpha
  + [\ee^{\pm\ii\alpha}\,T,\, X_P]\,T\right\}\ii\,\ee^{\pm\ii\alpha}\displaybreak[0]\\[0.05cm]
= {} & \left\{\!-\frac{1}{2}\left(\ii\,\ee^{\pm\ii\alpha}\,T\,\overline{X}_P 
  + \ii\,\ee^{\mp\ii\alpha}\,\overline{T}\,X_P\right)T - \ii\,\la\,T\cos\alpha
  + \frac{\ii}{2}\left(\ee^{\pm\ii\alpha}\,T\,\overline{X}_P
  - \ee^{\mp\ii\alpha}\,\overline{T}\,X_P\right)T\right\}\ii\,\ee^{\pm\ii\alpha}\\[0.05cm]    
= {} & \left\{\!-\frac{\ii}{2}\left(\ee^{\pm\ii\alpha}\,T^2\,\overline{X}_P 
  + \ee^{\mp\ii\alpha}\,T\,\overline{T}\,X_P\right) - \ii\,\la\,T\cos\alpha
  + \frac{\ii}{2}\left(\ee^{\pm\ii\alpha}\,T^2\,\overline{X}_P
  - \ee^{\mp\ii\alpha}\,T\,\overline{T}\,X_P\right)\right\}\ii\,\ee^{\pm\ii\alpha}\\[0.05cm]
= {} & \left\{-\ii\,\ee^{\mp\ii\alpha}\,X_P - \ii\,\la\,T\cos\alpha\right\}\ii\,\ee^{\pm\ii\alpha}\\[0.05cm]
= {} & X_P + \la\,T\,\ee^{\pm\ii\alpha}\cos\alpha\,,  
\end{align*}
We put $\XFkpm(\ph) := X$, and the proof of Theorem \ref{Thm:X_F} is complete.
\end{proof}

From Theorem \ref{Thm:X_F} with \eqref{Eq:tx_and_ty} one easily finds the result of Corollary \ref{Coro:xF_and_yF}.

\begin{coro} \label{Coro:xF_and_yF}
A parametric representation of the flank curves $\XFkpm$ of tooth $k$, $k = 1,2,\ldots,z_1$, of the drive gear is given by
\begin{align*}
  \xFkpm(\ph)
= {} & \phantom{-}r(\ph)\cos\ph - \lak(\ph)\cos\alpha\,
  \frac{\psi'(\ph)(1+\psi'(\ph))\sin(\ph\mp\alpha) - \psi''(\ph)\cos(\ph\mp\alpha)}
  	   {w(\ph)}\,,\\[0.1cm]
  \yFkpm(\ph)
= {} & {-r(\ph)\sin\ph} - \lak(\ph)\cos\alpha\,
  \frac{\psi'(\ph)(1+\psi'(\ph))\cos(\ph\mp\alpha) + \psi''(\ph)\sin(\ph\mp\alpha)}
  	   {w(\ph)}\,.
\end{align*}
\end{coro}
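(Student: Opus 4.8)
The plan is to take the closed complex expression for $\XFkpm(\ph)$ furnished by Theorem \ref{Thm:X_F} and simply separate it into real and imaginary parts, which by definition are $\xFkpm$ and $\yFkpm$. Since the expression reads $\XFkpm = X_P + \lak\cos\alpha\,(T\,\ee^{\pm\ii\alpha})$, and both $\lak$ and $\cos\alpha$ are real scalars, they factor out of the real/imaginary-part operation; the whole task reduces to splitting the two complex quantities $X_P$ and $T\,\ee^{\pm\ii\alpha}$. The first term is immediate: $X_P = r\,\ee^{-\ii\ph}$ contributes $r\cos\ph$ to the real part and $-r\sin\ph$ to the imaginary part, which are exactly the leading terms of the claimed formulas.

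For the second term the key step is to combine the two exponential factors. Inserting $T = \tfrac{\psi'' - \ii\psi'(1+\psi')}{w}\,\ee^{-\ii\ph}$ and using $\ee^{-\ii\ph}\,\ee^{\pm\ii\alpha} = \ee^{-\ii(\ph\mp\alpha)}$, I would write
\[
  T\,\ee^{\pm\ii\alpha}
= \frac{\psi'' - \ii\psi'(1+\psi')}{w}\,\bigl(\cos(\ph\mp\alpha) - \ii\sin(\ph\mp\alpha)\bigr).
\]
This single manipulation is what produces the shifted arguments $\ph\mp\alpha$ appearing in the statement. Multiplying out the product of the two factors and reading off the parts gives
\[
  \Rez\bigl(T\,\ee^{\pm\ii\alpha}\bigr)
= \frac{\psi''\cos(\ph\mp\alpha) - \psi'(1+\psi')\sin(\ph\mp\alpha)}{w}\,,
\]
\[
  \Imz\bigl(T\,\ee^{\pm\ii\alpha}\bigr)
= -\,\frac{\psi''\sin(\ph\mp\alpha) + \psi'(1+\psi')\cos(\ph\mp\alpha)}{w}\,.
\]
Scaling by $\lak\cos\alpha$ and adding the $X_P$ contributions then yields precisely the two displayed expressions, once one observes that $\psi''\cos(\ph\mp\alpha) - \psi'(1+\psi')\sin(\ph\mp\alpha) = -\bigl(\psi'(1+\psi')\sin(\ph\mp\alpha) - \psi''\cos(\ph\mp\alpha)\bigr)$, which accounts for the minus sign written in front of the fraction in the formula for $\xFkpm$.

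There is no genuine obstacle here; the corollary is a direct specialization of Theorem \ref{Thm:X_F}, and the text itself flags it as something one ``easily finds''. The only point requiring care is the sign bookkeeping enforced by the $\pm/\mp$ convention: one must confirm that the upper sign in $\ee^{\pm\ii\alpha}$ consistently produces the upper sign in $\ph\mp\alpha$, i.e.\ that $-\ph+\alpha = -(\ph-\alpha)$. As an alternative that matches the hint in the text, one may instead substitute the real forms $t_x,t_y$ from \eqref{Eq:tx_and_ty} into $T\,\ee^{\pm\ii\alpha} = (t_x + \ii t_y)(\cos\alpha \pm \ii\sin\alpha)$, expand, and recombine the resulting products of $\cos\ph,\sin\ph$ with $\cos\alpha,\sin\alpha$ using the angle-addition identities for $\cos(\ph\mp\alpha)$ and $\sin(\ph\mp\alpha)$; this route is slightly longer but arrives at the same expressions, so I would prefer the direct exponential computation above.
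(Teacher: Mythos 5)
Your proof is correct and is essentially the paper's own argument: the paper gives no detailed proof, stating only that Corollary \ref{Coro:xF_and_yF} follows from Theorem \ref{Thm:X_F} with \eqref{Eq:tx_and_ty}, i.e.\ by splitting the complex formula into real and imaginary parts, which is exactly what you do, with the sign bookkeeping $\ee^{-\ii\ph}\,\ee^{\pm\ii\alpha}=\ee^{-\ii(\ph\mp\alpha)}$ handled correctly. Whether one combines the exponentials first (your preferred route) or substitutes $t_x,t_y$ and recombines via the angle-addition identities (the route the paper hints at, and your stated alternative), it is the same elementary computation arriving at the same expressions.
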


\begin{remark}
The first Frenet formula for plane curves (see e.\,g.\ \textcite[p.\ 10]{Kuehnel}) is
\beq
  \frac{\dd T}{\dd s}
= \kappa \ii T\,,
\eeq
where $\ii T$ is the unit normal vector which one gets by a counter-clockwise rotation of $T$ around $\pi/2$, $s$ the arc length, and $\kappa$ the {\em oriented} curvature. 
In the case of the curve $X_P$ we have 
\beqn \label{Eq:dT/ds}
  \frac{\dd T}{\dd s}
= \frac{\dd T}{\dd\ph}\,\frac{\dd\ph}{\dd s}
= \kappa \ii T
  \quad\Longrightarrow\quad
  T'(\ph)
= s'(\ph)\,\kappa(\ph)\,\ii\,T(\ph)\,.
\eeqn
Comparing \eqref{Eq:dT/ds} to \eqref{Eq:T'-2}, one sees that
\beqn \label{Eq:h=s'*kappa}
  h(\ph) = s'(\ph)\,\kappa(\ph)\,.
\eeqn  
With
\beqn \label{Eq:s'}
  s'(\ph)
= \frac{\dd}{\dd\ph} \left(a\int_{\ph_0}^\ph \frac{w(\phi)}{(1+\psi'(\phi))^2}\,\dd\phi\right)  
= \frac{a w(\ph)}{(1+\psi'(\ph))^2}
= |X_P'(\ph)|
\eeqn
it follows that the curvature of $X_P$ at point $\ph$ is
\beqn \label{Eq:kappa(phi)}
\begin{aligned}
  \kappa(\ph)
= {} & \frac{h(\ph)}{s'(\ph)}
= \frac{\left(1+\psi'(\ph)\right)^2 h(\ph)}{a w(\ph)}\\[0.1cm]
= {} & \frac{\left(1+\psi'(\ph)\right)^3 \left[\psi'(\ph)\left(\psi^{(3)}(\ph) - \psi'(\ph) - \psi'^2(\ph)\right) 
  - 2\psi''^2(\ph)\right]}
  {a w^3(\ph)}\,.  
\end{aligned}
\eeqn
Since it can be assumed that $X_P$ is a regular curve, we have $s'(\ph) = |X_P'(\ph)| > 0$.
If $\kappa(\ph) > 0$, then $T'(\ph)$ and $\ii T(\ph)$ have equal direction, hence $X_P$ turns to the left.
If $\kappa(\ph) < 0$, then $T'(\ph)$ and $\ii T(\ph)$ have opposite direction, hence $X_P$ turns to the right. \hfill\tr
\end{remark}

The perpendicular to the flank of the rack-cutter at the currently generated point $\XFkpm(\ph)$ of the gear tooth flank $\XFkpm$ passes through the instantaneous centre of velocity, $X_P(\ph)$, of the rack-cutter motion (see \cite[pp.\ 26-27]{Wunderlich}, \cite[pp.\ 274-275, 341-342] {Litvin&Fuentes}).
Knowing this, it is possible to give a rather short alternative proof of Theorem \ref{Thm:X_F} without using the envelope of the family of rack-cutter flank positions.

\begin{SCfigure}[][h]
  \includegraphics[scale=1]{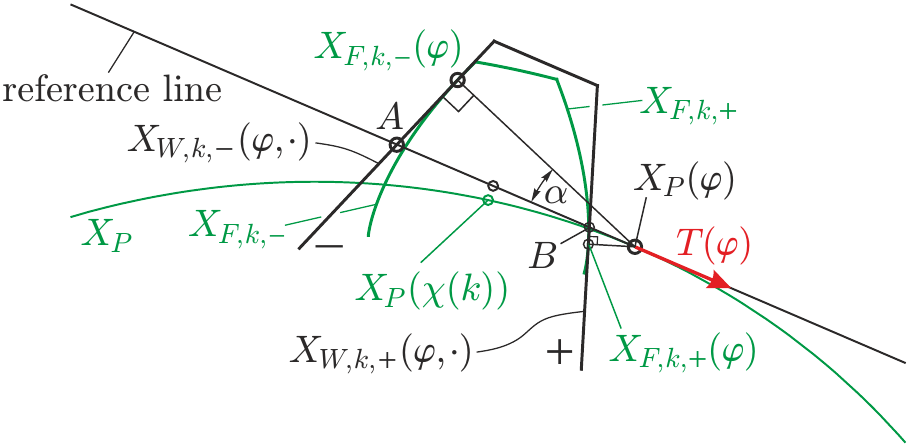}
  \caption{Sketch for the short proof the Theorem \ref{Thm:X_F}}
  \label{Fig:Triangle01}   
\end{SCfigure}

\begin{proof}[Short proof of Theorem \tn{\ref{Thm:X_F}}] 
We consider the situation in which point $\XFkm(\ph)$ of the flank curve $\XFkm$ is generated (see Fig.\ \ref{Fig:Triangle01}).
We denote by $A$ the intersection between the reference line and the line $\XWkm(\ph,\cdot)$, and by $\mu$ the signed distance between $A$ and $\XFkm(\ph) \equiv \XWkm(\ph,\mu)$ with $\mu > 0$ ($\mu < 0$) if $\XFkm(\ph)$ is on the left side (right side) of the reference line with respect to the direction of vector $T(\ph)$.
The signed distance between $X_P(\ph)$ and $A$ is given by $\la_{k,-}(\ph)$ (see \eqref{Eq:lambda_k} and Fig.\ \ref{Fig:Tooth_generation01a}).
The points $X_P(\ph)$, $A$ and $\XFkm(\ph)$ are the vertices of a rectangular triangle and thus easily follows 
\beqn \label{Eq:mu_minus}
  \mu
= -\la_{k,-}(\ph)\sin\alpha\,.  
\eeqn
Analogously one finds the different value 
\beqn \label{Eq:mu_plus}
  \mu
= \la_{k,+}(\ph)\sin\alpha  
\eeqn
for the point $\XFkp(\ph)$ of the flank curve $\XFkp$ by considering the rectangular triangle with vertices $X_P(\ph)$, $B$ and $\XFkp(\ph)$.
So we can write \eqref{Eq:mu_minus} and \eqref{Eq:mu_plus} together as  
\beqn \label{Eq:mu_plus_minus}
  \mu
= \pm\la_{k,\pm}(\ph)\sin\alpha\,.  
\eeqn
Inserting \eqref{Eq:mu_plus_minus} into \eqref{Eq:XWkpm}, we get
\begin{align*}
  \XFkpm(\ph)
= {} & \XWkpm(\ph,\pm\lak(\ph)\sin\alpha)
  \displaybreak[0]\\[0.05cm] 
= {} & X_P(\ph) + \lak(\ph)\left(1 \pm \ii\ee^{\pm\ii\alpha}\sin\alpha\right) T(\ph)
  \displaybreak[0]\\[0.05cm]
= {} & X_P(\ph) + \lak(\ph)\left[1 \pm \ii(\cos\alpha \pm \ii\sin\alpha)\sin\alpha\right] T(\ph)
  \displaybreak[0]\\[0.05cm]
= {} & X_P(\ph) + \lak(\ph)\left[1 \pm (\ii\cos\alpha\sin\alpha \mp \sin^2\alpha)\right] T(\ph)
  \displaybreak[0]\\[0.05cm]
= {} & X_P(\ph) + \lak(\ph)\left(1 - \sin^2\alpha \pm \ii\cos\alpha\sin\alpha\right) T(\ph)
  \displaybreak[0]\\[0.05cm]  
= {} & X_P(\ph) + \lak(\ph)\left(\cos^2\alpha \pm \ii\cos\alpha\sin\alpha\right) T(\ph)
  \displaybreak[0]\\[0.05cm]
= {} & X_P(\ph) + \lak(\ph)\cos\alpha\left(\cos\alpha \pm \ii\sin\alpha\right) T(\ph)
  \displaybreak[0]\\[0.05cm]
= {} & X_P(\ph) + \lak(\ph)\,T(\ph)\,\ee^{\pm\ii\alpha}\cos\alpha\,. \qedhere
\end{align*}
\end{proof}      

$X_P$ is a negatively oriented curve.
This means that an outer parallel curve is on its left side, and an inner parallel curve on its right side.
Hence, with $X_P(\ph)$ and $T(\ph)$ according to \eqref{Eq:X_P} and \eqref{Eq:T}, respectively, a parallel curve with distance $d$, $0 < d < \infty$, is given by
\begin{align} \label{Eq:Xpdpm}
  \Xpdpm(\ph)
= {} & X_P(\ph) + d\,T(\ph)\,\ee^{\pm\ii\pi/2}
= X_P(\ph) \pm d\,\ii\,T(\ph)\nonumber\\[0.1cm]
= {} & X_P(\ph) \pm d\,\ii\,
  \frac{\psi''(\ph) - \ii \psi'(\ph)(1+\psi'(\ph))}{w(\ph)}\,\ee^{-\ii\ph}\displaybreak[0]\nonumber\\[0.1cm]
= {} & X_P(\ph) \pm d\,\frac{\ii \psi''(\ph) + \psi'(\ph)(1+\psi'(\ph))}{w(\ph)}\,\ee^{-\ii\ph}\,,
\end{align}
where the upper and lower sign are for an outer and inner parallel curve, respectively; $w(\ph)$ see \eqref{Eq:w}.
Note that our convention concerning ``$\pm$'' does not apply to $\Xpdpm(\ph)$.
Splitting of \eqref{Eq:Xpdpm} into real and imaginary part yields 
\begin{align*}
  x_{p,d,\pm}(\ph)
= {} & \phantom{-}r(\ph)\cos\ph \pm d\,\frac{\psi''(\ph)\sin\ph + \psi'(\ph)(1+\psi'(\ph))\cos\ph}{w(\ph)}\,,\\[0.1cm]
  y_{p,d,\pm}(\ph)
= {} & {-r(\ph)\sin\ph} \pm d\,\frac{\psi''(\ph)\cos\ph - \psi'(\ph)(1+\psi'(\ph))\sin\ph}{w(\ph)}\,.   
\end{align*}

We denote by $\XMkpm(\ph)$ the parametric equation of the curve $\XMkpm$ (see Fig.\ \ref{Fig:Curves_for_tooth_k}) of the mid point $M_\pm$ (see Fig.\ \ref{Fig:Reference_profile01}).
Considering Fig.\ \ref{Fig:Reference_profile01}, we see that
\beqn \label{Eq:tan_alpha_and_cos_alpha}
  \tan\alpha = \frac{\ell_1}{\hf-\rh}
  \qquad\mbox{and}\qquad
  \cos\alpha = \frac{\rh}{\ell_2}\,. 
\eeqn
Now, using Fig.\ \ref{Fig:Tooth_generation01a} we find
\begin{align} \label{Eq:XMkpm}
  \XMkpm(\ph)
= {} & X_P(\ph) + \left[\pm\tfrac{\pi m}{4} \pm \ell_1 \pm \ell_2 
  - aI(\chi(k),\ph)\right] T(\ph) - \ii\,(\hf-\rh)\,T(\ph)\nonumber\\[0.05cm]  
= {} & X_P(\ph) + \left[\lak(\ph) \pm (\hf-\rh)\tan\alpha \pm \rh\sec\alpha\right] T(\ph)
  - \ii\,(\hf-\rh)\,T(\ph)\nonumber\\[0.05cm]
= {} & X_P(\ph) 
  + \left[\lak(\ph) \pm \rh\sec\alpha - (\hf-\rh)(\ii \mp \tan\alpha\right] T(\ph)\,.
\end{align}
The tangent unit vector at point $\XMkpm(\ph)$ of $\XMkpm$ is given by
\beq
  \TMkpm(\ph)
:= \frac{\XMkpm'(\ph)}{\,\big|\XMkpm'(\ph)\big|\,}\,.   
\eeq
From \eqref{Eq:XMkpm} with \eqref{Eq:T}, \eqref{Eq:T'-2} and \eqref{Eq:lambda_k'=-|X_P'|} we obtain
\begin{align*}
  \XMkpm'(\ph)
= {} & X_P'(\ph) + \lak'(\ph)\,T(\ph)
  + \left[\lak(\ph) \pm \rh\sec\alpha - (\hf-\rh)(\ii \mp \tan\alpha)\right] T'(\ph)\\[0.05cm]
= {} & \left\{|X_P'(\ph)| + \lak'(\ph)
  + \left[\lak(\ph) \pm \rh\sec\alpha - (\hf-\rh)(\ii \mp \tan\alpha)\right] \ii\,h(\ph)\right\} T(\ph)\\[0.05cm]
= {} & \left[\lak(\ph) \pm \rh\sec\alpha - (\hf-\rh)(\ii \mp \tan\alpha)\right] \ii\,h(\ph)\,T(\ph)\,.     
\end{align*}
The normal unit vector at point $\ph$ of the curve $\XMkpm$ pointing to the left is given by
\begin{align} \label{Eq:NMkpm}
  \NMkpm(\ph)
= {} & \frac{\XMkpm'(\ph)\,\ee^{\ii\pi/2}}{\big|\XMkpm'(\ph)\,\ee^{\ii\pi/2}\big|}
= \frac{\XMkpm'(\ph)\,\ii}{\big|\XMkpm'(\ph)\,\ii\big|}\nonumber\displaybreak[0]\\[0.1cm]
= {} & {-}\frac{\left[\lak(\ph) \pm \rh\sec\alpha 
  - (\hf-\rh)(\ii \mp \tan\alpha)\right] h(\ph)\,T(\ph)}
  {\big|\!\left[\lak(\ph) \pm \rh\sec\alpha
  - (\hf-\rh)(\ii \mp \tan\alpha)\right] h(\ph)\,T(\ph)\big|}\nonumber\\[0.1cm]
= {} & {-}\frac{\left[\lak(\ph) \pm \rh\sec\alpha 
  - (\hf-\rh)(\ii \mp \tan\alpha)\right] h(\ph)\,T(\ph)}
  {\left|\lak(\ph) \pm \rh\sec\alpha 
  - (\hf-\rh)(\ii \mp \tan\alpha)\right|\, |h(\ph)|}\nonumber\\[0.1cm]
= {} & {-}\sgn(h(\ph))\,\frac{\left[\lak(\ph) \pm \rh\sec\alpha 
  - (\hf-\rh)(\ii \mp \tan\alpha)\right] T(\ph)}
  {\left|\lak(\ph) \pm \rh\sec\alpha 
  - (\hf-\rh)(\ii \mp \tan\alpha)\right|}\nonumber\\[0.1cm]
= {} & {-}\sgn(h(\ph))\,\frac{\left[\lak(\ph) \pm \rh\sec\alpha 
  - (\hf-\rh)(\ii \mp \tan\alpha)\right] T(\ph)}
  {\:\sqrt{[\lak(\ph) \pm \rh\sec\alpha \pm (\hf-\rh)\tan\alpha]^2 + (\hf-\rh)^2}\:}\,.
\end{align}
Thus, a parametric equation of the parallel curve of interest (fillet curve), $\Xrkpm$ (see Fig.\ \ref{Fig:Curves_for_tooth_k}), with distance $\rh$ to the curve $\XMkpm$ is given by
\beqn \label{Eq:Xrkpm}
  \Xrkpm(\ph)
= \XMkpm(\ph) + \rh\NMkpm(\ph)\,.  
\eeqn
From \eqref{Eq:Xrkpm} with \eqref{Eq:XMkpm} and \eqref{Eq:NMkpm} one easily gets
\begin{align*}
  \xrkpm(\ph)
= {} & \phantom{-}r(\ph)\cos\ph + \left(1 - \frac{\rh\,\sgn(h(\ph))}
  {\:\sqrt{[\lak(\ph) \pm \rh\sec\alpha \pm (\hf-\rh)\tan\alpha]^2 + (\hf-\rh)^2}\:}\right)\\[0.1cm]
& \qquad\qquad\qquad \cdot\left\{[\lak(\ph) \pm \rh\sec\alpha \pm (\hf-\rh)\tan\alpha]\,t_x(\ph)
  + (\hf-\rh)\,t_y(\ph)\right\},\\[0.1cm]  
  \yrkpm(\ph)
= {} & {-}r(\ph)\sin\ph + \left(1 - \frac{\rh\,\sgn(h(\ph))}
  {\:\sqrt{[\lak(\ph) \pm \rh\sec\alpha \pm (\hf-\rh)\tan\alpha]^2 + (\hf-\rh)^2}\:}\right)\\[0.1cm]
& \qquad\qquad\qquad \cdot\left\{[\lak(\ph) \pm \rh\sec\alpha \pm (\hf-\rh)\tan\alpha]\,t_y(\ph)
  - (\hf-\rh)\,t_x(\ph)\right\}.
\end{align*}


{\bf Contact point of fillet curve $\boldsymbol{\Xrkpm}$ and dedendum curve $\boldsymbol{X_f}$.}
We determine a condition for the value of $\ph$ at the contact point of fillet curve $\Xrkm$ and dedendum curve $X_f := X_{p,\hf,-}$ (see Fig.\ \ref{Fig:Curves_for_tooth_k}; $\Xpdpm$ see \eqref{Eq:Xpdpm}).
This contact point is generated when the normal of the curve $X_P$ at point $X_P(\ph)$ passes through point $M_-$ and thus also through $C_-$ (see Fig.\ \ref{Fig:Contact_point01}).
\begin{SCfigure}[][h]
  \includegraphics[scale=1]{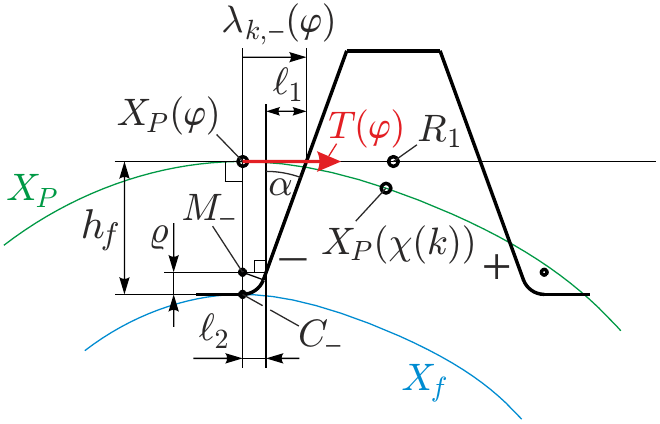}
  \caption{Sketch for the situation when the contact point of $\Xrkm$ (not shown) and $X_f$ is generated}
  \label{Fig:Contact_point01}
\end{SCfigure}
In this situation the point $C_-$ coincides with the contact point, and we have $\la_{k,-}(\ph) = \ell_1 + \ell_2$, hence (see Fig.\ \ref{Fig:Reference_profile01} and Eq.\ \eqref{Eq:tan_alpha_and_cos_alpha})
\beqn \label{Eq:-lambda_k}
  \la_{k,-}(\ph)
= (\hf-\rh)\tan\alpha + \rh\sec\alpha
\eeqn
follows as condition for the searched value of $\ph$.
Analogously, for the contact point of the fillet curve $\Xrkp$ and the dedendum curve $X_f$ the formula
\beqn \label{Eq:+lambda_k}
  \la_{k,+}(\ph)
= -(\hf-\rh)\tan\alpha - \rh\sec\alpha
\eeqn
is obtained.
We can combine \eqref{Eq:-lambda_k} and \eqref{Eq:+lambda_k} to
\beqn \label{Eq:-+lambda_k}
  {\mp}\lak(\ph)
= (\hf-\rh)\tan\alpha + \rh\sec\alpha\,.
\eeqn

\section{Driven gear}
\label{Sec:Driven_gear}

Since both flank curves of a tooth of the drive gear are meshing with both flank curves of a tooth space of the driven gear, here we consider the flanks of the tooth spaces (see Fig.\ \ref{Fig:Definition_flanks01}).
For the generation of a tooth of the drive gear we used the flanks of a tooth space of the rack-cutter.
Now, we use these flanks of the rack-cutter again, but as flanks of a tooth.
This approach has the advantage that we can determine corresponding points of the flanks of the drive and the driven gear; that means: points that are in contact for a value of the drive angle $\ph$.        

From \eqref{Eq:r_and_R} and \eqref{Eq:Xi_P} we get
\beq
  R'(\ph)
= - \frac{a\psi''(\ph)}{\ds (1+\psi'(\ph))^2}
  \quad\mbox{and}\quad
  \varXi_P'(\ph)
= -R'(\ph)\ee^{\ii \psi(\ph)} - R(\ph)\ii \psi'(\ph)\ee^{\ii \psi(\ph)}\,,    
\eeq
respectively, hence
\beq
  \varXi_P'(\ph)
= \frac{a\psi''(\ph)}{\ds (1+\psi'(\ph))^2}\,\ee^{\ii \psi(\ph)}
  - \frac{a\ii \psi'(\ph)}{\ds 1+\psi'(\ph)}\,\ee^{\ii \psi(\ph)}
= \frac{a\psi''(\ph)-a\ii \psi'(\ph)(1+\psi'(\ph))}{\ds (1+\psi'(\ph))^2}\,\ee^{\ii \psi(\ph)}   
\eeq
and
\begin{align} \label{Eq:|Xi_P'|}
  |\varXi_P'(\ph)|
= {} & \left|\frac{a\psi''(\ph)}{\ds (1+\psi'(\ph))^2} - \frac{a\ii \psi'(\ph)}{\ds 1+\psi'(\ph)}\right|
  \Big|\ee^{\ii \psi(\ph)}\Big|  
= \sqrt{\frac{a^2\psi''^2(\ph)}{(1+\psi'(\ph))^4} + \frac{a^2\psi'^2(\ph)}{(1+\psi'(\ph))^2}}
  \nonumber\\[0.1cm] 
= {} & \frac{a\,\sqrt{\psi''^2(\ph) + \psi'^2(\ph)(1+\psi'(\ph))^2}}{(1+\psi'(\ph))^2}
= \frac{aw(\ph)}{(1+\psi'(\ph))^2}\,,
\end{align}
$w(\ph)$ see \eqref{Eq:w}.
It follows that the tangent unit vector of $\varXi_P$ at point $\varXi_P(\ph)$ (see Fig.\ \ref{Fig:Tooth_generation01b}) is given by
\beqn \label{Eq:TXi}
  \TXi(\ph)
:= \frac{\varXi_P'(\ph)}{|\varXi_P'(\ph)|}
= \frac{\psi''(\ph) - \ii \psi'(\ph)(1+\psi'(\ph))}{w(\ph)}\,\ee^{\ii \psi(\ph)}\,.
\eeqn
Splitting \eqref{Eq:TXi} into real and imaginary part gives 
\beqn \label{Eq:txi_and_teta}
\left.
\begin{aligned}
  t_\xi(\ph) := \Rez \TXi(\ph)   
= {} & \frac{\psi''(\ph)\cos\psi(\ph) + \psi'(\ph)(1+\psi'(\ph))\sin\psi(\ph)}{w(\ph)}\,,\\[0.1cm]
  t_\eta(\ph) := \Imz \TXi(\ph)
= {} & \frac{\psi''(\ph)\sin\psi(\ph) - \psi'(\ph)(1+\psi'(\ph))\cos\psi(\ph)}{w(\ph)}\,.      
\end{aligned}
\;\right\}
\eeqn

Now we determine a parametric equation for the rack-cutter flank line $\XiWkpm$ that generates flank curve $\XiFkpm$ of tooth space $k$, $k = 1,2,\ldots,z_2$. 
If $\ph = \chi(k)$, then the point $R_2$ of the rack-cutter coincides with the point $\varXi_P(\chi(k))$ of the centrode $\varXi_P$ (see Fig.\ \ref{Fig:Tooth_generation01b}).
The vectors $\TXi(\ph)$ and $\vv{\varXi_P(\ph)R_2}$ point into the same direction if $I(\chi(k),\ph) < 0$.
The signed arc length between the points $\varXi_P(\chi(k))$ and $\varXi_P(\ph)$ ($=$ signed distance between $R_2$ and $\varXi_P(\ph)$) is equal to $aI(\chi(k),\ph)$.
The signed distances $\la_{k,+}(\ph)$ and $\la_{k,-}(\ph)$ are given by \eqref{Eq:lambda_k}.
$\lak(\ph) > 0$ indicates that $\TXi(\ph)$ and the arrow of $\lak(\ph)$ have the same direction.   
So we have the following parametric equation for the rack-cutter flank line $\XiWkpm$
\beqn
\begin{aligned} \label{Eq:XiWkpm}
  \XiWkpm(\ph,\mu)
= {} & \varXi_P(\ph) + \lak(\ph)\,\TXi(\ph)
  + \mu\,\TXi(\ph)\,\ee^{\ii\left(\frac{\pi}{2}\,\pm\,\alpha\right)}\\[0.05cm]
= {} & \varXi_P(\ph) + \lak(\ph)\,\TXi(\ph) + \mu\,\TXi(\ph)\,\ii\ee^{\pm\ii\alpha}\\[0.05cm] 
= {} & \varXi_P(\ph) + \left[\lak(\ph) + \mu\ii\ee^{\pm\ii\alpha}\right] \TXi(\ph),
  \quad \mu \in \R\,.    
\end{aligned}
\eeqn
Splitting of \eqref{Eq:XiWkpm} into real and imaginary part gives
\beqn \label{Eq:xiWk_and_etaWk}
\left.
\begin{aligned}
  \xiWkpm(\ph,\mu)
= {} & {-}R(\ph)\cos\psi(\ph) + \lak(\ph)\,t_\xi(\ph)
  + \mu\, [\mp t_\xi(\ph)\sin\alpha - t_\eta(\ph)\cos\alpha]\,,\\[0.1cm]
  \etaWkpm(\ph,\mu)
= {} & \hspace{0.025cm}{-}R(\ph)\sin\psi(\ph) + \lak(\ph)\,t_\eta(\ph)
  + \mu\, [\mp t_\eta(\ph)\sin\alpha + t_\xi(\ph)\cos\alpha]\,.   
\end{aligned}
\;\right\}
\eeqn

\begin{SCfigure}[][h]
  \includegraphics[scale=1]{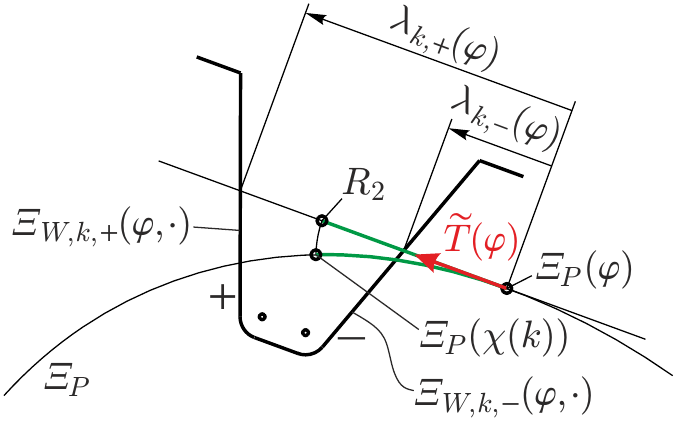}
  \caption{Motion of the rack-cutter during the generation of tooth space $k$}
  \label{Fig:Tooth_generation01b} 
\end{SCfigure}

\begin{thm} \label{Thm:Xi_F}
A parametric equation of the flank curves $\XiFkpm$ of tooth space $k$, $k = 1,2,\ldots,z_2$, of the driven gear is given by
\beqn \label{Eq:XiFkpm}
  \XiFkpm(\ph)
= \varXi_P(\ph) + \lak(\ph)\,\TXi(\ph)\,\ee^{\pm\ii\alpha}\cos\alpha\,,
\eeqn
where
\begin{gather*}
  \varXi_P(\ph)
= -R(\ph)\,\ee^{\ii \psi(\ph)}\,,\qquad
  \TXi(\ph)
= \frac{\psi''(\ph) - \ii \psi'(\ph)(1+\psi'(\ph))}{w(\ph)}\,\ee^{\ii \psi(\ph)}\,,\\[0.1cm]
  \lak(\ph)
= \pm\frac{\pi m}{4} - a \int_{\chi(k)}^\ph \frac{w(\phi)}{(1+\psi'(\phi))^2}\,\dd\phi  
\end{gather*}
with
\beq
  R(\ph)
= \frac{a}{1 + \psi'(\ph)}\,,\qquad
  w(\ph)
= \sqrt{\strut\ds \psi''^2(\ph) + \psi'^2(\ph)(1+\psi'(\ph))^2}\,,
\eeq
and $\chi(k)$ according to \eqref{Eq:chi(k)}.
\end{thm}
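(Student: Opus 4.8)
The plan is to reproduce the proof of Theorem~\ref{Thm:X_F} essentially verbatim, with the drive-gear data $(X_P,T)$ replaced throughout by the driven-gear data $(\varXi_P,\TXi)$. That proof reaches the envelope formula \eqref{Eq:X-1} and then evaluates it; the evaluation uses the centrode and its tangent only through two structural facts, so the first task is to confirm that both facts survive the substitution.

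The first fact is the arc-length identity $\lak'(\ph)=-|\varXi_P'(\ph)|$. Since $\lak$ is given by the same integral \eqref{Eq:lambda_k} in both sections, its derivative \eqref{Eq:lambda_k'} is unchanged, while \eqref{Eq:|Xi_P'|} shows $|\varXi_P'(\ph)|=aw(\ph)/(1+\psi'(\ph))^2$ -- exactly the expression \eqref{Eq:|X_P'|-1} found for $|X_P'(\ph)|$. Comparing the two yields the analogue of \eqref{Eq:lambda_k'=-|X_P'|} at once.

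The second fact is that $\TXi'(\ph)=\ii\,\hXi(\ph)\,\TXi(\ph)$ for some real-valued $\hXi$. The quickest route is the Frenet observation of the Remark: $\TXi$ is a unit vector, so $\TXi\,\overline{\TXi}=1$, and differentiating gives $\Rez(\TXi'\,\overline{\TXi})=0$; hence $\TXi'\,\overline{\TXi}$ is purely imaginary and $\TXi'=\ii\,\hXi\,\TXi$ with $\hXi$ real (geometrically $\hXi=s'\kappa$ for the centrode $\varXi_P$). If one prefers the explicit computation that produced \eqref{Eq:T'-2}, writing $\TXi\,w\,\ee^{-\ii\psi}=\psi''-\ii\psi'(1+\psi')$ and differentiating in $\ph$ reproduces that calculation line for line; the only change is that the factor $\ee^{-\ii\psi}$ contributes $-\ii\psi'$ where $\ee^{\ii\ph}$ formerly contributed $+\ii$, which merely shifts the real multiplier from $h$ to $\hXi=h+1+\psi'$. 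Its precise value plays no role below.

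With both facts in hand the evaluation is mechanical. Taking $A:=\XiWkpm(\ph,1)$ and $B:=\XiWkpm(\ph,0)$ from \eqref{Eq:XiWkpm} gives $A-B=\ii\,\ee^{\pm\ii\alpha}\TXi$ and $(A-B)'=\ii\,\ee^{\pm\ii\alpha}\TXi'$, so $[A-B,(A-B)']=[\TXi,\TXi']$; the exterior-product evaluations of $[A,B]$ and $[A,B]'$ then run word for word, the identity $\lak'=-|\varXi_P'|$ cancelling the same pair of terms as before, and $\TXi'=\ii\,\hXi\,\TXi$ letting $\hXi$ cancel between numerator and denominator of \eqref{Eq:X-1} exactly as $h$ did. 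The outcome is $\XiFkpm(\ph)=\varXi_P(\ph)+\lak(\ph)\,\TXi(\ph)\,\ee^{\pm\ii\alpha}\cos\alpha$, which is \eqref{Eq:XiFkpm}. I anticipate no genuine obstacle: the single point needing attention is that $\hXi$ be real-valued, and the unit-vector argument settles this immediately. An even shorter route simply copies the short proof of Theorem~\ref{Thm:X_F}: the rack-cutter flank normal at the generated point passes through the instantaneous centre $\varXi_P(\ph)$, so the same right-triangle relation $\mu=\pm\lak(\ph)\sin\alpha$ holds, and inserting it into \eqref{Eq:XiWkpm} gives \eqref{Eq:XiFkpm} by identical algebra.
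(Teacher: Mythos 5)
Your proposal is correct and takes essentially the same route as the paper: the paper's own proof of Theorem~\ref{Thm:Xi_F} simply reruns the envelope argument of Theorem~\ref{Thm:X_F} with $(\varXi_P,\TXi)$ in place of $(X_P,T)$, resting on exactly the two facts you isolate, namely $\lak'(\ph)=-|\varXi_P'(\ph)|$ and $\TXi'(\ph)=\ii\,\hXi(\ph)\,\TXi(\ph)$ with $\hXi$ real, and it also records the same short alternative proof via $\mu=\pm\lak(\ph)\sin\alpha$. The only (cosmetic) difference is that the paper establishes the second fact by the explicit computation and states the formula \eqref{Eq:hXi} for $\hXi$, while you derive the realness of $\hXi$ from $\TXi$ being a unit vector; your side remark $\hXi=h+1+\psi'$ is consistent with \eqref{Eq:h} and \eqref{Eq:hXi}, whose difference works out to $(1+\psi')\bigl[\psi'^2(1+\psi')^2+\psi''^2\bigr]/w^2=1+\psi'$.
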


\begin{proof}
The proof is completely analogous to that of Theorem \ref{Thm:X_F}.
We start with the linear equation system
\beq
\left.
\begin{array}{r@{\;=\;}l}
  [A(\ph)-B(\ph),\varXi] & [A(\ph),B(\ph)]\:,\\[0.1cm]
  [(A(\ph)-B(\ph))',\varXi] & [A(\ph),B(\ph)]',
\end{array}  
\right\}
\eeq
choose
\begin{align*}
  A = A(\ph)
:= {} & \XiWkpm(\ph,1)
= \varXi_P(\ph) + \lak(\ph)\,\TXi(\ph) + \ii\,\ee^{\pm\ii\alpha}\,\TXi(\ph)\,,\\[0.05cm] 
  B = B(\ph)
:= {} & \XiWkpm(\ph,0)
= \varXi_P(\ph) + \lak(\ph)\,\TXi(\ph)\,,
\end{align*}
and get
\begin{gather*}
  A - B
= \ii\,\ee^{\pm\ii\alpha}\,\TXi(\ph)\,,\quad
  (A - B)' = \ii\,\ee^{\pm\ii\alpha}\,\TXi'(\ph)\,,\quad
  [A-B,(A-B)'] = \big[\TXi(\ph),\TXi'(\ph)\big]\,,\\[0.1cm]
  [A,B]
= \big[\ii\,\ee^{\pm\ii\alpha}\,\TXi(\ph),\, \varXi_P(\ph)\big] - \lak(\ph)\cos\alpha\,,\\[0.1cm]
  [A,B]'
= \big[\ii\,\ee^{\pm\ii\alpha}\,\TXi'(\ph),\, \varXi_P(\ph)\big]
  - |\varXi_P'|\cos\alpha - \lak'(\ph)\cos\alpha\,.   
\end{gather*}
With
\beqn \label{Eq:lambda_k'=-|Xi_P'|}
  \lak'(\ph) = -|\varXi_P'(\ph)|
\eeqn
(see \eqref{Eq:lambda_k'} and \eqref{Eq:|Xi_P'|}) we have
\beq
  [A,B]'
= \big[\ii\,\ee^{\pm\ii\alpha}\,\TXi'(\ph),\, \varXi_P(\ph)\big]\,.
\eeq
It follows that (cp.\ \eqref{Eq:X-2})
\beqn \label{Eq:Xi}
  \varXi
= \frac
	{\big(\big[\ii\,\ee^{\pm\ii\alpha}\,\TXi(\ph),\, \varXi_P(\ph)\big]
		- \lak(\ph)\cos\alpha\big)\,\TXi'(\ph) 
		- [\ii\,\ee^{\pm\ii\alpha}\,\TXi'(\ph),\, \varXi_P(\ph)]\,\TXi(\ph)}
	{\rule[1.5ex]{0pt}{1ex}\tn{$\big[\TXi(\ph),\, \TXi'(\ph)\big]$}}
  \,\ii\,\ee^{\pm\ii\alpha}\,. 
\eeqn
From \eqref{Eq:TXi} one finds
\beqn \label{Eq:TXi'}
  \TXi'(\ph)
= \ii\,\tilde{h}(\ph)\,\TXi(\ph)  
\eeqn
with the function
\beqn \label{Eq:hXi}
\begin{aligned}
  \hXi\, \colon [0,2\pi] 
\;\rightarrow\; {} & \R\\ 
  \ph
\;\mapsto\; {} & \hXi(\ph)
:= \frac{(1+\psi'(\ph))\left[\psi'(\ph)\left(\psi^{(3)}(\ph) + \psi'^2(\ph) + \psi'^3(\ph)\right) 
  - \psi''^2(\ph)\right]}
  {w^2(\ph)}\,.  
\end{aligned}
\eeqn
Plugging \eqref{Eq:TXi'} into \eqref{Eq:Xi} finally gives
\beq
  \varXi
= \varXi_P(\ph) + \lak(\ph)\,\TXi(\ph)\,\ee^{\pm\ii\alpha}\cos\alpha\,.
\eeq
We put $\XiFkpm(\ph) := \varXi$ which completes the proof of Theorem \ref{Thm:Xi_F}.
\end{proof}

From Theorem \ref{Thm:Xi_F} with \eqref{Eq:txi_and_teta} one concludes the result of Corollary \ref{Coro:xiF_and_etaF}.

\begin{coro} \label{Coro:xiF_and_etaF}
A parametric representation of the flank curves $\XiFkpm$ of tooth space $k$, $k = 1,2,\ldots,z_2$, of the driven gear is given by
\begin{align*}
  \xiFkpm(\ph)
= {} & {-}R(\ph)\cos\psi(\ph)\\[0.1cm]
& + \lak(\ph)\cos\alpha\,
  \frac{\psi''(\ph)\cos(\psi(\ph)\pm\alpha) + \psi'(\ph)(1+\psi'(\ph))\sin(\psi(\ph)\pm\alpha)}
  	   {w(\ph)}\,,\\[0.3cm]
  \etaFkpm(\ph)
= {} & {-R(\ph)\sin\psi(\ph)}\\[0.1cm]
& + \lak(\ph)\cos\alpha\,
  \frac{\psi''(\ph)\sin(\psi(\ph)\pm\alpha) - \psi'(\ph)(1+\psi'(\ph))\cos(\psi(\ph)\pm\alpha)}
  	   {w(\ph)}\,. 
\end{align*}
\end{coro}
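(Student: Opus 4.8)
The plan is to read off the real and imaginary parts of the single complex equation \eqref{Eq:XiFkpm} of Theorem~\ref{Thm:Xi_F}, exactly as Corollary~\ref{Coro:xF_and_yF} is obtained from Theorem~\ref{Thm:X_F}. First I would write the flank curve as
\[
  \XiFkpm(\ph)
= \varXi_P(\ph) + \lak(\ph)\cos\alpha\,\bigl(\TXi(\ph)\,\ee^{\pm\ii\alpha}\bigr),
\]
and treat the two summands separately. Since $\varXi_P(\ph) = -R(\ph)\,\ee^{\ii\psi(\ph)}$, the first summand contributes $-R(\ph)\cos\psi(\ph)$ to $\xiFkpm = \Rez\XiFkpm$ and $-R(\ph)\sin\psi(\ph)$ to $\etaFkpm = \Imz\XiFkpm$, which are precisely the leading terms in the claimed formulas.

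The decisive step, and the reason the result can be stated in the compact form with arguments $\psi\pm\alpha$, is to merge the two exponential factors rather than first splitting $\TXi$ into $t_\xi,t_\eta$ via \eqref{Eq:txi_and_teta} and then multiplying by $\ee^{\pm\ii\alpha}=\cos\alpha\pm\ii\sin\alpha$. Using the closed form \eqref{Eq:TXi} of $\TXi$,
\[
  \TXi(\ph)\,\ee^{\pm\ii\alpha}
= \frac{\psi''(\ph) - \ii\psi'(\ph)(1+\psi'(\ph))}{w(\ph)}\,\ee^{\ii(\psi(\ph)\pm\alpha)},
\]
so the angle $\psi(\ph)$ is simply replaced by $\psi(\ph)\pm\alpha$ throughout. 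Expanding $\ee^{\ii(\psi\pm\alpha)}=\cos(\psi\pm\alpha)+\ii\sin(\psi\pm\alpha)$ and multiplying by $\psi''-\ii\psi'(1+\psi')$ then yields real part $[\psi''\cos(\psi\pm\alpha)+\psi'(1+\psi')\sin(\psi\pm\alpha)]/w$ and imaginary part $[\psi''\sin(\psi\pm\alpha)-\psi'(1+\psi')\cos(\psi\pm\alpha)]/w$.

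Finally I would multiply both of these by the real factor $\lak(\ph)\cos\alpha$ and add the contributions of $\varXi_P(\ph)$ computed above; the outcome matches the stated expressions for $\xiFkpm$ and $\etaFkpm$ term by term. There is no genuine analytic difficulty here: the corollary is a direct real/imaginary decomposition of \eqref{Eq:XiFkpm}. The only thing requiring care — the ``main obstacle'' such as it is — is bookkeeping of the $\pm$ convention, making sure the sign in $\ee^{\pm\ii\alpha}$ is propagated consistently so that the $+$-flank and $-$-flank formulas emerge with the correct arguments $\psi\pm\alpha$; one should resist routing the computation through $t_\xi,t_\eta$, which would give the equivalent but far less transparent form involving separate products of $\sin\alpha,\cos\alpha$ with $\sin\psi,\cos\psi$.
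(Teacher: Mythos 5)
Your proposal is correct and takes essentially the same approach as the paper, whose entire proof is the remark that the corollary follows from Theorem~\ref{Thm:Xi_F} by splitting \eqref{Eq:XiFkpm} into real and imaginary parts via \eqref{Eq:txi_and_teta}. Your variant of merging $\TXi(\ph)$ and $\ee^{\pm\ii\alpha}$ into the single exponential $\ee^{\ii(\psi(\ph)\pm\alpha)}$ before expanding is only a bookkeeping refinement (it trades the angle-addition identities implicit in the paper's route through $t_\xi$, $t_\eta$ for a single substitution $\psi \mapsto \psi\pm\alpha$), not a different argument.
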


\begin{remark}
The first Frenet formula in the case of the curve $\varXi_P$ is
\beq
  \frac{\dd \TXi}{\dd s}
= \kaXi\,\ii\,\TXi\,,
\eeq
where $s$ is the arc length and $\kaXi$ the {\em oriented} curvature.
We have
\beqn \label{Eq:dTXi/ds}
  \frac{\dd\TXi}{\dd s}
= \frac{\dd\TXi}{\dd\ph}\,\frac{\dd\ph}{\dd s}
= \kappa \ii \TXi
  \quad\Longrightarrow\quad
  \TXi'(\ph)
= s'(\ph)\,\kaXi(\ph)\,\ii\,\TXi(\ph)\,.
\eeqn
Comparing \eqref{Eq:dTXi/ds} to \eqref{Eq:TXi'}, one sees that
\beqn \label{Eq:hXi=s'*kaXi}
  \hXi(\ph) = s'(\ph)\,\kaXi(\ph)\,.
\eeqn
With
\beqn \label{Eq:s'(phi)=|Xi_P'(phi)|}
  s'(\ph) = |\varXi_P'(\ph)|
\eeqn
(see \eqref{Eq:s'} and \eqref{Eq:|Xi_P'|}), it follows that the curvature of $\varXi_P$ at point $\ph$ is
\beqn \label{Eq:kaXi(phi)}
\begin{aligned}
  \kaXi(\ph)
= {} & \frac{\hXi(\ph)}{s'(\ph)}
= \frac{\left(1+\psi'(\ph)\right)^2 \hXi(\ph)}{a w(\ph)}\\[0.1cm]
= {} & \frac{\left(1+\psi'(\ph)\right)^3 \left[\psi'(\ph)\left(\psi^{(3)}(\ph) + \psi'^2(\ph) + \psi'^3(\ph)\right) 
  - \psi''^2(\ph)\right]}
  {a w^3(\ph)}\,.  
\end{aligned}
\eeqn
Since it can be assumed that $\varXi_P$ is a regular curve, we have $s'(\ph) = |\varXi_P'(\ph)| > 0$.
If $\kaXi(\ph) > 0$, then $\TXi'(\ph)$ and $\ii \TXi(\ph)$ have equal direction, hence $\varXi_P$ turns to the left.
If $\kaXi(\ph) < 0$, then $\TXi'(\ph)$ and $\ii \TXi(\ph)$ have opposite direction, hence $\varXi_P$ turns to the right. \hfill\tr
\end{remark}

As for Theorem \ref{Thm:X_F}, we give a short proof of Theorem \ref{Thm:Xi_F} which uses the fact that the perpendicular to the flank of the rack-cutter at the currently generated point of the gear tooth flank passes through the instantaneous centre of velocity of the motion of the rack-cutter.

\begin{proof}[Short proof of Theorem {\em \ref{Thm:Xi_F}}]
The value of the parameter $\mu$ at the contact point $\XiWkpm(\ph,\mu) \equiv \XiFkpm(\ph)$ between rack-cutter flank line $\XiWkpm(\ph,\cdot)$ and tooth flank curve $\XiFkpm$ to be generated is given by \eqref{Eq:mu_plus_minus}.
Therefore, applying \eqref{Eq:XiWkpm}, we get
\begin{align*}
  \XiFkpm(\ph)
= {} & \XiWkpm(\ph,\pm\lak(\ph)\sin\alpha)\displaybreak[0]\\[0.05cm] 
= {} & \varXi_P(\ph) + \lak(\ph)\left(1 \pm \ii\ee^{\pm\ii\alpha}\sin\alpha\right) \TXi(\ph)
  \displaybreak[0]\\[0.05cm]
= {} & \varXi_P(\ph) + \lak(\ph)\,\TXi(\ph)\,\ee^{\pm\ii\alpha}\cos\alpha\,. \qedhere  
\end{align*}
\end{proof}

$\varXi_P$ is a positively oriented curve.
An outer parallel curve is on its right side, and an inner parallel curve on its left side.
Therefore, with $\varXi_P(\ph)$ and $\TXi(\ph)$ according to \eqref{Eq:Xi_P} and \eqref{Eq:TXi}, respectively, a parallel curve with distance $d$, $0 < d < \infty$, is given by
\begin{align} \label{Eq:Xipdpm}
  \Xipdpm(\ph)
= {} & \varXi_P(\ph) + d\,\TXi(\ph)\,\ee^{\mp\ii\pi/2}
= \varXi_P(\ph) \mp d\,\ii\,\TXi(\ph)\nonumber\displaybreak[0]\\[0.1cm]
= {} & \varXi_P(\ph) \mp 
  d\,\ii\,\frac{\psi''(\ph) - \ii \psi'(\ph)(1+\psi'(\ph))}{w(\ph)}\,\ee^{\ii \psi(\ph)}
  \displaybreak[0]\nonumber\\[0.1cm]
= {} & \varXi_P(\ph) \mp 
  d\,\frac{\psi'(\ph)(1+\psi'(\ph)) + \ii \psi''(\ph)}{w(\ph)}\,\ee^{\ii \psi(\ph)}\,,  
\end{align}
where the upper and lower sign are for the outer and inner parallel curve, respectively; $w(\ph)$ see \eqref{Eq:w}.
Note that our convention concerning ``$\pm$'' does not apply to $\Xipdpm(\ph)$. 
Splitting of \eqref{Eq:Xipdpm} into real and imaginary part yields 
\begin{align*}
  \xi_{p,d,\pm}(\ph)
= {} & {-R(\ph)\cos \psi(\ph)} \mp 
  d\,\frac{\psi'(\ph)(1+\psi'(\ph))\cos \psi(\ph) - \psi''(\ph)\sin \psi(\ph)}{w(\ph)}\,,\\[0.1cm]
  \eta_{p,d,\pm}(\ph)
= {} & {-R(\ph)\sin \psi(\ph)} \mp
  d\,\frac{\psi'(\ph)(1+\psi'(\ph))\sin \psi(\ph) + \psi''(\ph)\cos \psi(\ph)}{w(\ph)}\,.   
\end{align*}

We denote by $\XiMkpm(\ph)$ the parametric equation of the curve $\XiMkpm$ of the mid point $M_\pm$ in Fig.\ \ref{Fig:Reference_profile01} where $M_-$ is the right one.
We have
\begin{align} \label{Eq:XiMkpm}
  \XiMkpm(\ph)
= {} & \varXi_P(\ph) + \left[\pm\frac{\pi m}{4} \mp (\hf-\rh)\tan\alpha \mp \rh\sec\alpha
  - aI(\chi(k),\ph)\right] \TXi(\ph)\nonumber\\
& \hspace{1.085cm} + \ii\,(\hf-\rh)\,\TXi(\ph)\nonumber\\[0.05cm]
= {} & \varXi_P(\ph) 
  + \left[\lak(\ph) \mp \rh\sec\alpha + (\hf-\rh)(\ii \mp \tan\alpha\right] \TXi(\ph)\,.
\end{align}
The tangent unit vector at point $\XiMkpm(\ph)$ of $\XiMkpm$ is given by
\beq
  \TXiMkpm(\ph)
:= \frac{\XiMkpm'(\ph)}{\,\big|\XiMkpm'(\ph)\big|\,}\,.   
\eeq
From \eqref{Eq:XiMkpm} with \eqref{Eq:TXi}, \eqref{Eq:TXi'} and \eqref{Eq:lambda_k'=-|Xi_P'|} we obtain
\begin{align*}
  \XiMkpm'(\ph)
= {} & \varXi_P'(\ph) + \lak'(\ph)\,\TXi(\ph)
  + \left[\lak(\ph) \mp \rh\sec\alpha + (\hf-\rh)(\ii \mp \tan\alpha)\right] \TXi'(\ph)\\[0.05cm]
= {} & \big\{|\varXi_P'(\ph)| + \lak'(\ph)
  + \left[\lak(\ph) \mp \rh\sec\alpha + (\hf-\rh)(\ii \mp \tan\alpha)\right] \ii\,\hXi(\ph)\big\}\,
  \TXi(\ph)\\[0.05cm]
= {} & \left[\lak(\ph) \mp \rh\sec\alpha + (\hf-\rh)(\ii \mp \tan\alpha)\right] \ii\,\hXi(\ph)\,\TXi(\ph)\,.     
\end{align*}
The normal unit vector at point $\ph$ of the curve $\XiMkpm$ pointing to the right is given by
\begin{align} \label{Eq:NXi_M}
  \NXiMkpm(\ph)
= {} & \frac{\XiMkpm'(\ph)\,\ee^{-\ii\pi/2}}{\big|\XiMkpm'(\ph)\,\ee^{-\ii\pi/2}\big|}
= -\frac{\XiMkpm'(\ph)\,\ii}{\big|\XiMkpm'(\ph)\,\ii\big|}\nonumber\displaybreak[0]\\[0.1cm]
= {} & \frac{\left[\lak(\ph) \mp \rh\sec\alpha 
  + (\hf-\rh)(\ii \mp \tan\alpha)\right] \hXi(\ph)\,\TXi(\ph)}
  {\big|\!\left[\lak(\ph) \mp \rh\sec\alpha
  + (\hf-\rh)(\ii \mp \tan\alpha)\right] \hXi(\ph)\,\TXi(\ph)\big|}\displaybreak[0]\nonumber\\[0.1cm]
= {} & \frac{\left[\lak(\ph) \mp \rh\sec\alpha 
  + (\hf-\rh)(\ii \mp \tan\alpha)\right] \hXi(\ph)\,\TXi(\ph)}
  {\left|\lak(\ph) \mp \rh\sec\alpha 
  + (\hf-\rh)(\ii \mp \tan\alpha)\right|\, |\hXi(\ph)|}\displaybreak[0]\nonumber\\[0.1cm]
= {} & \sgn(\hXi(\ph))\,\frac{\left[\lak(\ph) \mp \rh\sec\alpha 
  + (\hf-\rh)(\ii \mp \tan\alpha)\right] \TXi(\ph)}
  {\left|\lak(\ph) \mp \rh\sec\alpha 
  + (\hf-\rh)(\ii \mp \tan\alpha)\right|}\nonumber\\[0.1cm]
= {} & \sgn(\hXi(\ph))\,\frac{\left[\lak(\ph) \mp \rh\sec\alpha 
  + (\hf-\rh)(\ii \mp \tan\alpha)\right] \TXi(\ph)}
  {\:\sqrt{[\lak(\ph) \mp \rh\sec\alpha \mp (\hf-\rh)\tan\alpha]^2 + (\hf-\rh)^2}\:}\,.
\end{align}
Thus,
\beqn \label{Eq:Xirkpm}
  \Xirkpm(\ph)
= \XiMkpm(\ph) + \rh\NXiMkpm(\ph)  
\eeqn
is a parametric equation of the parallel curve of interest (fillet curve), $\Xirkpm$, with distance $\rh$ to curve $\XiMkpm$.
From \eqref{Eq:Xirkpm} with \eqref{Eq:XiMkpm} and \eqref{Eq:NXi_M} one easily gets
\begin{align*}
  \xirkpm(\ph)
= {} & {-}R(\ph)\cos\psi(\ph) + \left(1 + \frac{\rh\,\sgn(\hXi(\ph))}
  {\:\sqrt{[\lak(\ph) \mp \rh\sec\alpha \mp (\hf-\rh)\tan\alpha]^2 + (\hf-\rh)^2}\:}\right)\\[0.1cm]
& \qquad\qquad\qquad\quad\;\;\, \cdot\left\{[\lak(\ph) \mp \rh\sec\alpha \mp (\hf-\rh)\tan\alpha]\,t_\xi(\ph)
  - (\hf-\rh)\,t_\eta(\ph)\right\},\\[0.1cm]  
  \etarkpm(\ph)
= {} & {-}R(\ph)\sin\psi(\ph) + \left(1 + \frac{\rh\,\sgn(\hXi(\ph))}
  {\:\sqrt{[\lak(\ph) \mp \rh\sec\alpha \mp (\hf-\rh)\tan\alpha]^2 + (\hf-\rh)^2}\:}\right)\\[0.1cm]
& \qquad\qquad\qquad\quad\;\;\, \cdot\left\{[\lak(\ph) \mp \rh\sec\alpha \mp (\hf-\rh)\tan\alpha]\,t_\eta(\ph)
  + (\hf-\rh)\,t_\xi(\ph)\right\}.
\end{align*}


{\bf Contact point of fillet curve $\boldsymbol{\Xirkpm}$ and dedendum curve $\boldsymbol{\varXi_f}$.}
One finds
\beqn \label{Eq:+-lambda_k}
  {\pm}\lak(\ph)
= (\hf-\rh)\tan\alpha + \rh\sec\alpha
\eeqn
as condition for the value of $\ph$ at the contact point of $\Xirkpm$ and $\varXi_f:=\varXi_{p,\hf,-}$ ($\Xipdpm$ see \eqref{Eq:Xipdpm}).


\section{Undercut}\label{Sec:Undercut}
\subsection{Drive gear (undercut)}

There are two cases for the transition between the fillet curve $\Xrkpm$ and the working part of the flank curve $\XFkpm$: \mynobreakpar
\begin{itemize}[leftmargin=0.6cm]
\setlength{\itemsep}{-2pt}
\item[a)] $\Xrkpm$ and the working part of $\XFkpm$ have a common tangent direction at the {\em transition point}.
It is said that the flank is free of {\em undercut}.
\item[b)] $\Xrkpm$ and the working part $\XFkpm$ have no common tangent direction at the {\em transition point}.
It is said that {\em undercut} occurs at the flank.
\end{itemize}

Normally, there exists always exactly one {\em contact point} of $\Xrkpm$ and $\XFkpm$ where both curves have a common tangent direction.
In case (a) the transition point is the contact point, in case (b) not.   

We consider the ``$-$''-side of the gear tooth.
Undercut occurs if the value $\phBkm$ of the parameter $\ph$ at the contact point of $\XFkm$ and $\Xrkm$ is smaller than then value $\phSkm$ of $\ph$ at the singular point (cusp) of $\XFkm$.
On the ``$+$''-side we have the reverse situation: Undercut occurs if the value $\phBkp$ of $\ph$ at the contact point of $\XFkp$ and $\Xrkp$ is greater than the value $\phSkp$ of $\ph$ at the singular point (cusp) of $\XFkp$.
One gets the values $\phSkpm$ by deriving $\XFkpm(\ph)$ (see \eqref{Eq:XFkpm}) with respect to $\ph$, and then setting the result equal to zero,
\beqn \label{Eq:XFkpm'(phi)=0}
  \XFkpm'(\phSkpm)
= \frac{\dd\XFkpm}{\dd\ph}\,(\ph_{S,\pm})
= 0\,.  
\eeqn
Applying \eqref{Eq:T}, \eqref{Eq:lambda_k'=-|X_P'|} and \eqref{Eq:T'-2}, we have
\begin{align*}
  \XFkpm'(\ph)
= {} & X_P'(\ph) + \left(\lak'(\ph)\,T(\ph) + \lak(\ph)\,T'(\ph)\right) \ee^{\pm\ii\alpha}\cos\alpha
  \displaybreak[0]\\[0.05cm]
= {} & |X_P'(\ph)|\,T(\ph) + \left(\lak'(\ph) + \lak(\ph)\,\ii h(\ph)\right)
  T(\ph)\,\ee^{\pm\ii\alpha}\cos\alpha\displaybreak[0]\\[0.05cm] 
= {} & \left[-\lak'(\ph) + \left(\lak'(\ph) + \lak(\ph)\,\ii h(\ph)\right)
  (\cos\alpha \pm \ii\sin\alpha)\cos\alpha\right] T(\ph)\\[0.05cm]
= {} & \left[-\lak'(\ph) + \lak'(\ph)\left(\cos^2\alpha \pm \ii\sin\alpha\cos\alpha\right)\right.\\
& \:\, + \left.\lak(\ph)\,h(\ph)\left(\ii\cos^2\alpha \mp \sin\alpha\cos\alpha\right)\right] T(\ph)
  \displaybreak[0]\\[0.05cm]
= {} & \left[-\lak'(\ph) + \lak'(\ph)\cos^2\alpha \mp \lak(\ph)\,h(\ph)\sin\alpha\cos\alpha\right.\\
& \:\, + \left.\ii\left(\lak(\ph)\,h(\ph)\cos^2\alpha \pm \lak'(\ph)\sin\alpha\cos\alpha\right)\right]
  T(\ph)\displaybreak[0]\\[0.05cm]
= {} & \left[-\lak'(\ph)\sin^2\alpha \mp \lak(\ph)\,h(\ph)\sin\alpha\cos\alpha\right.\\
& \:\, + \left.\ii\left(\lak(\ph)\,h(\ph)\cos^2\alpha \pm \lak'(\ph)\sin\alpha\cos\alpha\right)\right]
  T(\ph)\displaybreak[0]\\[0.05cm]
= {} & \left[-\!\left(\lak'(\ph)\sin\alpha \pm \lak(\ph)\,h(\ph)\cos\alpha\right)\sin\alpha\right.\\
& \:\, + \left.\ii\left(\lak(\ph)\,h(\ph)\cos\alpha \pm \lak'(\ph)\sin\alpha\right)\cos\alpha\right]
  T(\ph)\,.
\end{align*}
From the condition \eqref{Eq:XFkpm'(phi)=0} it follows that the term in the square brackets is equal to zero, and so real and imaginary part of this term are equal to zero which gives
\beq
  \lak(\ph)\,h(\ph)\cos\alpha \pm \lak'(\ph)\sin\alpha = 0\,, 
\eeq
hence
\beq
  \lak(\ph)\,h(\ph) = \mp \lak'(\ph)\tan\alpha\,.
\eeq
Using \eqref{Eq:h=s'*kappa} and $s'(\ph) = -\lak'(\ph)$ (cp.\ \eqref{Eq:lambda_k'} and \eqref{Eq:s'}), we get
\beq
  -\lak(\ph)\,\lak'(\ph)\,\kappa(\ph)
= \mp\lak'(\ph)\,\tan\alpha\,.
\eeq
So, we have
\beqn \label{Eq:lambda*kappa}
  \lak(\ph)\,\kappa(\ph) = \pm\tan\alpha
\eeqn
as condition for the value $\phSkpm$ of $\ph$ at the singular point of $\XFkpm$.

\begin{figure}[h]
  \begin{center}
	\includegraphics[scale=1]{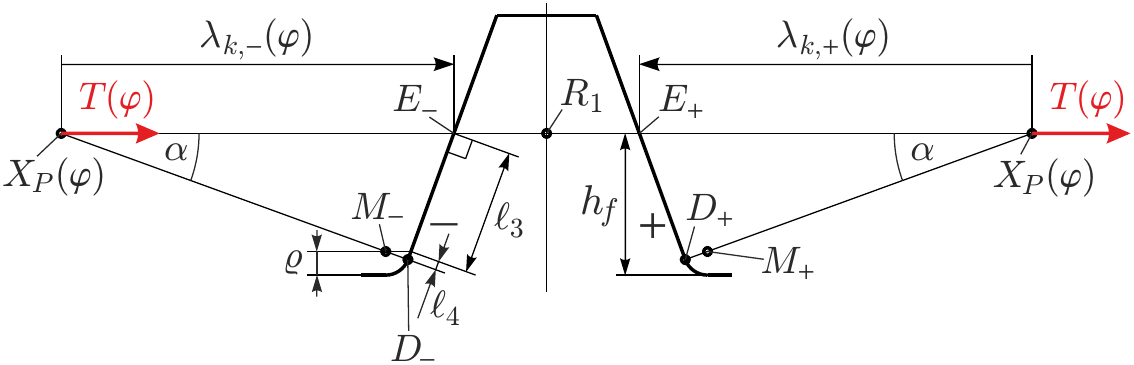}
	\caption{Sketch for the situation when the contact point of $\XFkpm$ and $\Xrkpm$ is generated}
 	\label{Fig:Contact_point02}   
  \end{center}
\end{figure}

Now, we determine a condition for the value of $\ph$ at the contact point of $\XFkpm$ and $\Xrkpm$ (see Fig.\ \ref{Fig:Curves_for_tooth_k}).
We denote by $D_\pm$ the point where the rack-cutter ``$\pm$''-flank is tangent to its rack-cutter fillet (see Fig.\ \ref{Fig:Contact_point02}). 
When the contact point is generated, the rack-cutter ``$\pm$''-flank at its point $D_\pm$ is tangent to $\XFkpm$ (not shown), and at the same time to $\Xrkpm$ (also not shown). 
So the rectangular triangle $D_\pm\,E_\pm\,X_P(\ph)$ gives
\beq
  \sin\alpha
= \frac{\ell_3+\ell_4}{\mp\lak(\ph)}\,,  
\eeq
where, for $\ell_4$ see also Fig.\ \ref{Fig:Reference_profile01}, 
\beq
  \ell_3 = (\hf-\rh)\sec\alpha\,,\qquad
  \ell_4 = \rh\tan\alpha\,,
\eeq
hence
\beq
  \mp\lak(\ph)
= \frac{\hf-\rh}{\cos\alpha \sin\alpha} + \frac{\rh}{\cos\alpha}\,,
\eeq
and therefore
\beqn \label{Eq:-+lambda*cos(alpha)}
  {\mp}\lak(\ph)\cos\alpha
= (\hf-\rh)\csc\alpha + \rh\,.    
\eeqn
This is the condition for the value $\phBkpm$ of the parameter $\ph$ at the contact point.
We write the condition \eqref{Eq:lambda*kappa} for the singular point as
\beqn \label{Eq:lambda*cos(alpha)_S}
  \lak(\ph)\cos\alpha
= \pm \frac{\sin\alpha}{\kappa(\ph)}\,.  
\eeqn
Since the functions $\lak(\ph)$ are strictly decreasing, 
and the non-undercutting conditions are
\beq
  \phSkm \le \phBkm
  \qquad\mbox{and}\qquad
  \phSkp \ge \phBkp\,, 
\eeq
we have
\beq
  \la_{k,-}(\phSkm) \ge \la_{k,-}(\phBkm)
  \qquad\mbox{and}\qquad
  \la_{k,+}(\phSkp) \le \la_{k,+}(\phBkp)\,.
\eeq
Therefore, from \eqref{Eq:-+lambda*cos(alpha)}, \eqref{Eq:lambda*cos(alpha)_S}, and $\cos\alpha > 0$, $0 \le \alpha < \pi/2$, one gets
\beq
  -\frac{\sin\alpha}{\kappa(\phSkm)} \ge (\hf-\rh)\csc\alpha + \rh
  \qquad\mbox{and}\qquad
  \frac{\sin\alpha}{\kappa(\phSkp)} \le -(\hf-\rh)\csc\alpha - \rh. 
\eeq
Putting together these inequalities, we have
\beq
  \frac{\sin\alpha}{-\kappa(\phSkpm)} \ge (\hf-\rh)\csc\alpha + \rh\,.
\eeq 
Since $\kappa(\ph) \le 0$, $0 \le \ph \le 2\pi$, we have $-\kappa(\phSkpm) \ge 0$ and
\beq
  -\kappa(\phSkpm)
\le \frac{\sin\alpha}{(\hf-\rh)\csc\alpha + \rh}
= \frac{\sin^2\alpha}{\hf - \rh + \rh\sin\alpha}\,.
\eeq
So we have found the  following theorem.

\begin{thm}
The \tn{``$\pm$''}-flank of tooth $k$, $k = 1,2,\ldots,z_1$, of the drive gear is free of undercut if  
\beqn \label{Eq:-kappa(ph_(S,pm))}
  {-}\kappa(\phSkpm)
\le \frac{\sin^2\alpha}{\hf - \rh(1-\sin\alpha)}\,,
\eeqn
where
\begin{itemize}[leftmargin=0.5cm]
\setlength{\itemsep}{-3pt}
\item $\kappa(\ph)$ is the curvature at point $X_P(\ph)$ of the centrode $X_P$ according to \eqref{Eq:kappa(phi)},
\item $\phSkpm$ the value of $\ph$ at the singular point (cusp) of $\XFkpm$ which is the solution of \eqref{Eq:lambda*kappa},
\item $\alpha$, $\hf$ and $\rh$ are the profile angle, the dedendum and the fillet radius, respectively {\em (}see Fig.\ {\em \ref{Fig:Reference_profile01})}.  
\end{itemize}
\end{thm}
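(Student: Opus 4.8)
The plan is to convert the geometric notion of ``undercut-free'' into an inequality on the oriented curvature $\kappa$ of the centrode $X_P$ by comparing, along each flank, two distinguished parameter values: the cusp value $\phSkpm$, at which $\XFkpm'(\phSkpm)=0$, and the contact value $\phBkpm$, at which the flank curve $\XFkpm$ meets the fillet curve $\Xrkpm$ tangentially. The governing observation is that on the ``$-$''-flank undercut is exactly the event $\phBkm<\phSkm$, while on the ``$+$''-flank it is the event $\phBkp>\phSkp$; hence the undercut-free conditions are $\phSkm\le\phBkm$ and $\phSkp\ge\phBkp$. The whole argument then reduces to expressing these two parameter orderings through $\lak$ and $\kappa$.

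First I would differentiate $\XFkpm(\ph)=X_P(\ph)+\lak(\ph)\,T(\ph)\,\ee^{\pm\ii\alpha}\cos\alpha$ from \eqref{Eq:XFkpm}, using $X_P'(\ph)=|X_P'(\ph)|\,T(\ph)$, $\lak'(\ph)=-|X_P'(\ph)|$ from \eqref{Eq:lambda_k'=-|X_P'|}, and $T'(\ph)=\ii h(\ph)T(\ph)$ from \eqref{Eq:T'-2}. After collecting all terms as a single complex multiple of $T(\ph)$ and splitting it into real and imaginary part, the condition $\XFkpm'(\phSkpm)=0$ forces both parts of the bracketed coefficient to vanish; after cancelling the nonzero factors $\sin\alpha$ and $\cos\alpha$, both reduce to the same scalar equation $\lak(\ph)\,h(\ph)\cos\alpha\pm\lak'(\ph)\sin\alpha=0$. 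Substituting $h=s'\kappa$ from \eqref{Eq:h=s'*kappa} together with $s'=-\lak'$ (from \eqref{Eq:lambda_k'} and \eqref{Eq:s'}) and dividing out $\lak'$ yields the cusp condition \eqref{Eq:lambda*kappa}, $\lak(\phSkpm)\,\kappa(\phSkpm)=\pm\tan\alpha$, which I rewrite as \eqref{Eq:lambda*cos(alpha)_S}, $\lak(\ph)\cos\alpha=\pm\sin\alpha/\kappa(\ph)$.

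Next I would read off the contact condition from elementary plane geometry of Fig.\ \ref{Fig:Contact_point02}: the right triangle $D_\pm E_\pm X_P(\ph)$ at the fillet tangency point $D_\pm$ gives $\sin\alpha=(\ell_3+\ell_4)/({\mp}\lak(\ph))$ with $\ell_3=(\hf-\rh)\sec\alpha$ and $\ell_4=\rh\tan\alpha$, which rearranges to \eqref{Eq:-+lambda*cos(alpha)}, namely ${\mp}\lak(\phBkpm)\cos\alpha=(\hf-\rh)\csc\alpha+\rh$. Since $\lak'(\ph)=-|X_P'(\ph)|<0$ each $\lak$ is strictly decreasing, so $\phSkm\le\phBkm$ and $\phSkp\ge\phBkp$ translate into $\la_{k,-}(\phSkm)\ge\la_{k,-}(\phBkm)$ and $\la_{k,+}(\phSkp)\le\la_{k,+}(\phBkp)$. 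Multiplying by $\cos\alpha>0$ and inserting \eqref{Eq:lambda*cos(alpha)_S} on the cusp sides and \eqref{Eq:-+lambda*cos(alpha)} on the contact sides gives $-\sin\alpha/\kappa(\phSkm)\ge(\hf-\rh)\csc\alpha+\rh$ and $\sin\alpha/\kappa(\phSkp)\le-(\hf-\rh)\csc\alpha-\rh$, which with $\kappa\le0$ (so that $-\kappa(\phSkpm)\ge0$) collapse into the single inequality ${-}\kappa(\phSkpm)\le\sin\alpha/((\hf-\rh)\csc\alpha+\rh)=\sin^2\alpha/(\hf-\rh(1-\sin\alpha))$, which is \eqref{Eq:-kappa(ph_(S,pm))}.

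I expect the main obstacle to be the first step, the differentiation and real/imaginary separation of $\XFkpm'$: it is purely mechanical once $T'=\ii hT$ and $\lak'=-|X_P'|$ are available, but one must carry the $\pm$ and $\mp$ signs carefully through the expansion of $(\cos\alpha\pm\ii\sin\alpha)\cos\alpha$, and use $1-\cos^2\alpha=\sin^2\alpha$, so that the real and imaginary parts collapse to the same scalar cusp condition. All remaining steps are sign bookkeeping controlled by the strict monotonicity of $\lak$ and the fixed sign $\kappa\le0$ of the centrode curvature.
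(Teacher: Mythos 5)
Your proposal is correct and follows essentially the same route as the paper's own proof: the same characterization of undercut via the ordering of $\phSkpm$ and $\phBkpm$, the same differentiation of $\XFkpm$ using $T'=\ii h T$ and $\lak'=-|X_P'|$ to obtain the cusp condition \eqref{Eq:lambda*kappa}, the same triangle $D_\pm E_\pm X_P(\ph)$ for the contact condition \eqref{Eq:-+lambda*cos(alpha)}, and the same monotonicity-of-$\lak$ argument combined with $\kappa\le 0$ to collapse both sign cases into \eqref{Eq:-kappa(ph_(S,pm))}. No gaps to report.
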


Note that, besides $\alpha$, $\hf$ and $\rh$, the condition \eqref{Eq:-kappa(ph_(S,pm))} {\em depends only on the curvature $\kappa(\ph)$ of the centrode $X_P$ at the only point $X_P(\phSkpm)$}.
So, in order to check if tooth $k$ is free of undercut, it is necessary to determine the singular points $\phSkpm$ from \eqref{Eq:lambda*kappa} and then use \eqref{Eq:-kappa(ph_(S,pm))}.

\begin{coro}
Every tooth of the drive gear is free of undercut if
\beqn \label{Eq:-kappa(ph)}
  {-}\kappa(\ph)
\le \frac{\sin^2\alpha}{\hf - \rh(1-\sin\alpha)}
  \quad\mbox{for}\quad 0 \le \ph \le 2\pi\,.
\eeqn
\end{coro}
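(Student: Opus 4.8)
The plan is to obtain this corollary as an immediate specialization of the preceding theorem, so essentially no new computation is required. The theorem already supplies a \emph{pointwise} undercut criterion: the ``$\pm$''-flank of tooth $k$ is free of undercut provided
\beq
  {-}\kappa(\phSkpm)
\le \frac{\sin^2\alpha}{\hf - \rh(1-\sin\alpha)}\,,
\eeq
where $\phSkpm$ is the parameter value locating the cusp of $\XFkpm$ (the solution of \eqref{Eq:lambda*kappa}). The corollary merely asks what happens if we demand the \emph{same} inequality not just at the isolated points $\phSkpm$ but uniformly for all $\ph \in [0,2\pi]$.

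First I would observe that every one of the finitely many singular-point parameters $\phSkpm$, ranging over $k = 1,2,\ldots,z_1$ and over both signs, lies in the interval $[0,2\pi]$; this is inherent in the setup, since $\ph$ is the drive-gear rotation angle ranging over $[0,2\pi]$ and each $\chi(k) \in [0,2\pi]$ by \eqref{Eq:chi(k)}. Consequently, if the hypothesis \eqref{Eq:-kappa(ph)} holds for \emph{all} $\ph \in [0,2\pi]$, then in particular it holds at each $\ph = \phSkpm$. That is precisely the hypothesis \eqref{Eq:-kappa(ph_(S,pm))} of the theorem for the flank in question.

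Applying the theorem flank-by-flank, I conclude that each ``$+$''- and ``$-$''-flank of each tooth $k = 1,\ldots,z_1$ is free of undercut, and hence every tooth of the drive gear is free of undercut. There is genuinely no obstacle here: the argument is pure universal instantiation, replacing the point-dependent condition at $\phSkpm$ by the stronger uniform condition over the whole domain. The only thing worth stating explicitly is that the right-hand side $\sin^2\alpha / (\hf - \rh(1-\sin\alpha))$ is a constant independent of $\ph$, so ``uniform in $\ph$'' is the natural and correct way to strengthen the per-tooth criterion into a single design condition covering all teeth simultaneously.
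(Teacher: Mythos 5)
Your overall route is the same as the paper's: the corollary is indeed an immediate instantiation of the per-flank criterion \eqref{Eq:-kappa(ph_(S,pm))}, applied to every flank of every tooth, and no new computation is required. However, the one substantive claim you make to justify the instantiation --- that every singular-point parameter $\phSkpm$ lies in $[0,2\pi]$ ``inherent in the setup'' --- is false, and the paper's own example refutes it: Table \ref{Tab:Undercut} lists $\ph_{S,\:\!1,\:\!-} = -0.662309 < 0$ and $\ph_{S,\:\!14,\:\!+} = 6.65339 > 2\pi$. The flanks of teeth centred near the ends of the parameter interval (note $\chi(1) = 0$) are generated by parameter values that spill over the ends of $[0,2\pi]$, so the solutions of \eqref{Eq:lambda*kappa} need not be confined to that interval, and as written your argument cannot reach those flanks.

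The gap closes in one line, but with a different justification: periodicity. Since $\psi'$ is $2\pi/n$-periodic, so are $\psi''$, $\psi^{(3)}$ and hence $w(\ph)$; by \eqref{Eq:kappa(phi)} the curvature $\kappa(\ph)$ is therefore $2\pi/n$-periodic, in particular $2\pi$-periodic. Consequently the hypothesis \eqref{Eq:-kappa(ph)}, assumed on $[0,2\pi]$, in fact holds for all $\ph \in \R$, and so it holds at every singular point $\phSkpm$ wherever that point lies. Applying the theorem flank-by-flank then yields the corollary exactly as you intended. Replace your containment claim by this periodicity remark and the proof is correct and coincides with the paper's (implicit) argument.
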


Introducing the dimensionless quantities
\beqn \label{Eq:hf^*_and_rho^*}
  \hf^* := \frac{\hf}{m}
  \qquad\mbox{and}\qquad
  \rh^* := \frac{\rh}{m}\,, 
\eeqn
from \eqref{Eq:-kappa(ph)} we get the condition 
\beqn \label{Eq:m <= ...}
  m
\le \frac{\sin^2\alpha}{\ds \left(\hf^*-\rh^*(1-\sin\alpha)\right)\max_{\,0\le\ph\le 2\pi}(-\kappa(\ph))}  
\eeqn
for the values of the module $m$ which provide that every tooth of the drive gear is free of undercut.
$\hf^*$ and $\rh^*$ in \eqref{Eq:m <= ...} can be freely chosen within reasonable limits, e.g.\ according to the rack-cutters available or to \textcite{DIN3972}.
Note that $\kappa(\ph)$ depends on the pivot distance $a = \overline{O_1 O_2}$.
A greater value of~$a$ yields a smaller value of $\max_{\,0\le\ph\le 2\pi}(-\kappa(\ph))$; hence greater values of $m$ are possible.
On the other hand, the chosen value of $m$ influences $a$.
From \eqref{Eq:m <= ...} with \eqref{Eq:kappa(phi)} one gets
\beq
  \frac{m}{a}
\le \frac
	{\sin^2\alpha}
	{\ds \big(\hf^*-\rh^*(1-\sin\alpha)\big)\,\max_{\,0\le\ph\le 2\pi}
		\left|\frac{(1+\psi'(\ph))^2\, h(\ph)}{w(\ph)}\right|}\,,  
\eeq
hence
\beqn \label{Eq:a/m >= ...}
  \frac{a}{m}
\ge \frac{\hf^*-\rh^*(1-\sin\alpha)}{\sin^2\alpha}\,
  \max_{\,0\le\ph\le 2\pi}\left|\frac{(1+\psi'(\ph))^2\, h(\ph)}{w(\ph)}\right|.  
\eeqn
Inequality \eqref{Eq:a/m >= ...} provides an useful criterion to determine pivot distance $a$ and module $m$ if undercut is not allowed.
The ratio $a/m$ can be considered as {\em dimensionless pivot distance} in a certain sense.
(Clearly, for $m = 1\,\tn{mm}$ it is the real pivot distance without the unit ``\tn{mm}''.)
With \eqref{Eq:a} we get
\beq
  z_1
\ge \frac{I(0,2\pi)\,\big(\hf^*-\rh^*(1-\sin\alpha)\big)}{\pi\sin^2\alpha}\,
  \max_{\,0\le\ph\le 2\pi}\left|\frac{(1+\psi'(\ph))^2\, h(\ph)}{w(\ph)}\right|  
\eeq
as criterion for the number $z_1$ of teeth necessary to ensure that each tooth is free of undercut.

\subsection{Driven gear (undercut)}

We determine the singular point $\phXiSkpm$ of $\XiFkpm$.
It follows from the condition
\beqn \label{Eq:XiFkpm'(phi)=0}
  \XiFkpm'(\phXiSkpm)=0\,.
\eeqn
Applying \eqref{Eq:TXi}, \eqref{Eq:TXi'} and \eqref{Eq:lambda_k'=-|Xi_P'|}, we have
\begin{align*}
  \XiFkpm'(\ph)
= {} & \varXi_P'(\ph) + \left(\lak'(\ph)\,\TXi(\ph) + \lak(\ph)\,\TXi'(\ph)\right)
  \ee^{\pm\ii\alpha}\cos\alpha\\[0.05cm]
= {} & |\varXi_P'(\ph)|\,\TXi(\ph) + \left(-|\varXi_P'(\ph)| + \lak(\ph)\,\ii \hXi(\ph)\right)
  \TXi(\ph)\,\ee^{\pm\ii\alpha}\cos\alpha\\[0.05cm] 
= {} & \left[|\varXi_P'(\ph)| + \left(-|\varXi_P'(\ph)| + \lak(\ph)\,\ii \hXi(\ph)\right)
  (\cos\alpha \pm \ii\sin\alpha)\cos\alpha\right] \TXi(\ph)\\[0.05cm]
= {} & \left[\left(|\varXi_P'(\ph)|\sin\alpha \mp \lak(\ph)\,\hXi(\ph)\cos\alpha\right)\sin\alpha\right.\\
& \:\, + \left.\ii\left(\lak(\ph)\,\hXi(\ph)\cos\alpha \mp |\varXi_P'(\ph)|\sin\alpha\right)\cos\alpha\right]
  \TXi(\ph)\,.
\end{align*}
From \eqref{Eq:XiFkpm'(phi)=0} it follows that the term in the square brackets is equal to zero, and so real and imaginary part of this term are equal to zero which gives
\beq
  \lak(\ph)\,\hXi(\ph)\cos\alpha \mp |\varXi_P'(\ph)|\sin\alpha = 0\,, 
\eeq
hence
\beq
  \lak(\ph)\,\hXi(\ph) = \pm |\varXi_P'(\ph)|\tan\alpha\,.
\eeq
Using \eqref{Eq:hXi=s'*kaXi} and \eqref{Eq:s'(phi)=|Xi_P'(phi)|}, we get
\beqn \label{Eq:lambda*kaXi}
  \lak(\ph)\,\kaXi(\ph)
= \pm\tan\alpha
\eeqn
as condition for the values $\phXiSkpm$ of $\ph$ at the singular points of $\XiFkpm$.
One finds
\beqn \label{Eq:+-lambda*cos(alpha)}
  {\pm}\lak(\ph)\cos\alpha
= (\hf-\rh)\csc\alpha + \rh    
\eeqn
as condition for the value $\phXiBkpm$ of $\ph$ at the contact point of $\XiFkpm$ and $\Xirkpm$.
Finally, one gets the following theorem.

\begin{thm}
The \tn{``$\pm$''}-flank of tooth space $k$, $k = 1,2,\ldots,z_2$, of the driven gear is free of undercut if  
\beqn \label{Eq:kaXi(ph_(S,pm))}
  \kaXi(\phXiSkpm)
\le \frac{\sin^2\alpha}{\hf - \rh(1-\sin\alpha)}\,,
\eeqn
where
\begin{itemize}[leftmargin=0.5cm]
\setlength{\itemsep}{-3pt}
\item $\kaXi(\ph)$ is the curvature at point $\varXi_P(\ph)$ of the centrode $\varXi_P$ according to \eqref{Eq:kaXi(phi)},
\item $\phXiSkpm$ the value of $\ph$ at the singular point (cusp) of $\XiFkpm$ which is the solution of \eqref{Eq:lambda*kaXi},
\item $\alpha$, $\hf$ and $\rh$ are the profile angle, the dedendum and the fillet radius, respectively {\em (}see Fig.\ {\em \ref{Fig:Reference_profile01})}.  
\end{itemize}
\end{thm}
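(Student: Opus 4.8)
The plan is to reproduce, mutatis mutandis, the argument used for the undercut theorem of the drive gear, since the two ingredients are already in hand: the singular-point condition \eqref{Eq:lambda*kaXi}, $\lak(\ph)\,\kaXi(\ph) = \pm\tan\alpha$, and the contact-point condition \eqref{Eq:+-lambda*cos(alpha)}, $\pm\lak(\ph)\cos\alpha = (\hf-\rh)\csc\alpha + \rh$. What remains is to encode the geometric meaning of ``free of undercut'' as an ordering of the parameter values $\phXiSkpm$ (singular point of $\XiFkpm$) and $\phXiBkpm$ (contact point of $\XiFkpm$ with the fillet curve $\Xirkpm$), and then to eliminate $\lak$ between the two conditions.

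First I would fix the direction of the criterion. As in the drive-gear case, undercut at a flank means the cusp of $\XiFkpm$ is passed before the contact point with the fillet, so that part of the working flank is cut away. Because $\varXi_P$ is \emph{positively} oriented, whereas $X_P$ was negatively oriented, the roles of the two sides are reversed relative to the drive gear: the ``$+$''-flank of the tooth space is free of undercut when $\widetilde{\ph}_{S,k,+} \le \phXiBkp$, and the ``$-$''-flank when $\widetilde{\ph}_{S,k,-} \ge \phXiBkm$. Since each $\lak(\ph)$ is strictly decreasing (by $\lak'(\ph) = -|\varXi_P'(\ph)| < 0$, cf.\ \eqref{Eq:lambda_k'=-|Xi_P'|}), these orderings turn into $\la_{k,+}(\widetilde{\ph}_{S,k,+}) \ge \la_{k,+}(\phXiBkp)$ and $\la_{k,-}(\widetilde{\ph}_{S,k,-}) \le \la_{k,-}(\phXiBkm)$, which the sign convention packages as one statement.

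Next I would substitute. Writing \eqref{Eq:lambda*kaXi} in the form $\lak(\ph)\cos\alpha = \pm\sin\alpha/\kaXi(\ph)$ and reading $\lak(\phXiBkpm)\cos\alpha$ off \eqref{Eq:+-lambda*cos(alpha)}, the two signed inequalities collapse, after multiplying by $\cos\alpha > 0$ ($0 \le \alpha < \pi/2$) and using the appropriate sign, into the single inequality $\sin\alpha/\kaXi(\phXiSkpm) \ge (\hf-\rh)\csc\alpha + \rh$. Invoking $\kaXi \ge 0$ for the positively oriented centrode (the analogue of $\kappa \le 0$ used for the drive gear), this rearranges to $\kaXi(\phXiSkpm) \le \sin\alpha/[(\hf-\rh)\csc\alpha + \rh]$, and simplifying the right-hand side through $\csc\alpha = 1/\sin\alpha$ yields exactly $\sin^2\alpha/(\hf - \rh(1-\sin\alpha))$, which is \eqref{Eq:kaXi(ph_(S,pm))}.

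The main obstacle is purely the bookkeeping of signs and orientation: one must determine correctly, for each of the two flanks separately, whether ``undercut'' corresponds to the singular point preceding or following the contact point, and keep track of the fact that here $\kaXi$ is nonnegative whereas $\kappa$ was nonpositive for the drive gear. Once the positive orientation of $\varXi_P$ is used to fix these signs, the algebraic elimination is word-for-word the drive-gear computation, and the two signed cases merge into the single stated inequality.
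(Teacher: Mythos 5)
Your proof is correct and follows essentially the same route the paper intends: after establishing the singular-point condition \eqref{Eq:lambda*kaXi} and the contact-point condition \eqref{Eq:+-lambda*cos(alpha)}, the paper simply says ``one gets the following theorem,'' relying on exactly the drive-gear elimination of $\lak$ that you carry out. You also correctly supply the one detail left implicit -- that the non-undercut orderings of $\phXiSkpm$ and $\phXiBkpm$ are reversed relative to the drive gear because of the reversed orientation of $\varXi_P$ (this reversal is in fact forced: with the drive-gear orderings the final inequality would come out in the wrong direction) -- after which both signed cases collapse to the stated bound just as in your computation.
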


\begin{coro}
Every tooth of the driven gear is free of undercut if
\beqn \label{Eq:kaXi(ph)}
  \kaXi(\ph)
\le \frac{\sin^2\alpha}{\hf - \rh(1-\sin\alpha)}
  \quad\mbox{for}\quad 0 \le \ph \le 2\pi\,.
\eeqn
\end{coro}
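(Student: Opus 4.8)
The plan is to obtain this corollary as an immediate specialization of the preceding theorem, exactly in parallel to the way the drive-gear corollary follows from the drive-gear undercut theorem. The preceding theorem asserts that, for each fixed tooth space $k \in \{1,2,\ldots,z_2\}$ and each fixed choice of sign, the corresponding ``$\pm$''-flank is free of undercut provided the curvature $\kaXi$ of the driven centrode $\varXi_P$, evaluated at the singular parameter $\phXiSkpm$, satisfies
\beq
  \kaXi(\phXiSkpm)
\le \frac{\sin^2\alpha}{\hf - \rh(1-\sin\alpha)}\,.
\eeq
Thus the content of the corollary is only to promote this pointwise, tooth-space-by-tooth-space statement to a single uniform criterion.

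First I would note that every singular parameter $\phXiSkpm$ is, by construction, a value of the drive angle $\ph$ and therefore lies in the domain $[0,2\pi]$ on which the hypothesis of the corollary is imposed. Consequently, if the inequality
\beq
  \kaXi(\ph)
\le \frac{\sin^2\alpha}{\hf - \rh(1-\sin\alpha)}
\eeq
holds for \emph{all} $\ph \in [0,2\pi]$, then in particular it holds at each of the finitely many points $\phXiSkpm$, for every $k \in \{1,2,\ldots,z_2\}$ and for both signs. For each such $k$ and each sign the hypothesis of the theorem is therefore met, so the theorem guarantees that the associated flank is free of undercut.

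Finally I would let $k$ range over all tooth spaces and both signs over the two flanks: since every flank of every tooth space is then free of undercut, and since the flanks of the tooth spaces are precisely the working flanks bounding the teeth of the driven gear, every tooth of the driven gear is free of undercut, which is the assertion. I do not expect any genuine obstacle here: the only point requiring care is the elementary observation that the universal quantifier ``for $0 \le \ph \le 2\pi$'' subsumes the particular singular parameters $\phXiSkpm$, so that the uniform bound automatically supplies the hypothesis of the theorem at each relevant point. This is the exact analogue, for $\varXi_P$ and $\kaXi$, of the drive-gear corollary for $X_P$ and $-\kappa$.
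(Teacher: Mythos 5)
Your proposal is correct and coincides with the paper's (implicit) argument: the paper states this corollary without any proof, precisely because it follows from the preceding theorem by specializing the uniform bound to the finitely many singular parameters $\phXiSkpm$, exactly as you do. One minor caveat: your claim that every $\phXiSkpm$ lies in $[0,2\pi]$ "by construction" is not literally guaranteed (in the paper's drive-gear example some singular parameters fall slightly outside this interval), but since $\kaXi$ is $2\pi$-periodic (as $\psi'$ and its derivatives are $2\pi/n$-periodic), the bound assumed on $[0,2\pi]$ extends to all real $\ph$ and your conclusion stands unchanged.
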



\section{The base curves}
\label{Sec:The_base_curves}

Unidirectional flanks of different teeth of a gear, i.e.\ all ``$+$''-flanks or all ``$-$''-flanks, can be generated as involutes of a so-called {\em base curve} \cite[p.\ 241]{Wunderlich}.
The respective base curve is the evolute of all unidirectional flanks.

\subsection{Base curve of the drive gear}

The following theorem establishes a parametric equation for the base curves $\XBpm$ of the drive gear.

\begin{lem} \label{Lem:X_B}
A parametric equation of the base curves $\XBpm$ of the drive gear is given by
\beqn \label{Eq:X_B}
  \XBpm(\ph) = X_P(\ph) \pm \frac{\sin\alpha}{\kappa(\ph)}\,T(\ph)\,\ee^{\pm\ii\alpha}\,,\quad
  0 \le \ph \le 2\pi\,, 
\eeqn
where $\kappa(\ph)$ and $T(\ph)$ are the curvature and the tangent unit vector, respectively, of the centrode $X_P$ at point $\ph$. 
\end{lem}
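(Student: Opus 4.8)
The base curve $\XBpm$ is defined as the evolute of the unidirectional flank curves $\XFkpm$, that is, the locus of their centres of curvature. The plan is therefore to compute that locus directly from the parametric equation \eqref{Eq:XFkpm} and to check that the result no longer involves the tooth index $k$, so that one and the same curve is the common evolute of all ``$\pm$''-flanks; this common curve will turn out to be \eqref{Eq:X_B}.

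First I would differentiate \eqref{Eq:XFkpm}. Using $X_P' = |X_P'|\,T$, the relation $\lak' = -|X_P'|$ from \eqref{Eq:lambda_k'=-|X_P'|}, the Frenet-type identity $T' = \ii h\,T$ from \eqref{Eq:T'-2}, and $h = s'\kappa$ from \eqref{Eq:h=s'*kappa}, the first derivative simplifies (this is essentially the computation leading to \eqref{Eq:lambda*kappa}) to the compact form
\beq
  \XFkpm'(\ph) = \ii\,\ee^{\pm\ii\alpha}\,s'(\ph)\,g_\pm(\ph)\,T(\ph),\qquad
  g_\pm(\ph) := \lak(\ph)\,\kappa(\ph)\cos\alpha \mp \sin\alpha.
\eeq
Since $s'(\ph) > 0$ and $g_\pm(\ph)$ is real, the unit tangent of the flank is $T_F = \sgn(g_\pm)\,\ii\,\ee^{\pm\ii\alpha}\,T$, so the unit normal (rotation of $T_F$ by $\pi/2$) is $\ii T_F = -\sgn(g_\pm)\,\ee^{\pm\ii\alpha}\,T$.

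Next I would differentiate once more, again using $T' = \ii h T$, and compute the oriented curvature via the exterior product, $\kappa_F = \Imz(\overline{\XFkpm'}\,\XFkpm'')/|\XFkpm'|^3$. Writing $\XFkpm' = \ii\,\ee^{\pm\ii\alpha}\,P\,T$ with $P := s'g_\pm$, the unit-modulus factors $\ii\,\ee^{\pm\ii\alpha}$ and $T$ cancel against their conjugates, leaving $\Imz(\overline{\XFkpm'}\,\XFkpm'') = P^2 h$ and $|\XFkpm'| = |P|$, hence $\kappa_F = h/|P| = \kappa/|g_\pm|$ and $\kappa_F^{-1} = |g_\pm|/\kappa$. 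Assembling the centre of curvature $\XBpm = \XFkpm + \kappa_F^{-1}\,\ii T_F$, the factor $|g_\pm|\,\sgn(g_\pm) = g_\pm$ reappears, the $\cos\alpha$-terms carrying $\lak$ cancel, and one is left exactly with $\XBpm = X_P \pm (\sin\alpha/\kappa)\,T\,\ee^{\pm\ii\alpha}$, which is \eqref{Eq:X_B}.

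The delicate point is bookkeeping the sign of $g_\pm$: it enters once as $\sgn(g_\pm)$ through the normal and once as $|g_\pm|$ through the radius of curvature, and these must recombine into $g_\pm$ for the $k$-dependent term to drop out. Moreover $g_\pm$ vanishes precisely at the cusps of $\XFkpm$ (condition \eqref{Eq:lambda*kappa}), where $\XFkpm' = 0$ and the curvature formula does not apply directly; but there the correction $\kappa_F^{-1}\,\ii T_F = -(g_\pm/\kappa)\,\ee^{\pm\ii\alpha}\,T$ vanishes, giving $\XBpm(\ph) = \XFkpm(\ph)$ --- the classical fact that an involute meets its evolute at its cusp. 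Since the final expression \eqref{Eq:X_B} is manifestly regular in $\ph$ and free of $k$, it extends by continuity to all of $[0,2\pi]$ and is the common base curve of all unidirectional flanks.
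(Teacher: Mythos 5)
Your proof is correct, and it takes a genuinely different route from the paper's. The paper never touches the flank curves: it invokes Wunderlich's characterization of the base curve as the envelope of all straight lines that cut the centrode at the fixed angle $\alpha$, parametrizes that family as $X = X_P(\ph) + \mu\,T(\ph)\,\ee^{\pm\ii\alpha}$, and evaluates the envelope with the same exterior-product formula \eqref{Eq:X-1} already used for Theorem \ref{Thm:X_F}, so only first derivatives (via $T' = \ii h T$, \eqref{Eq:T'-2}) and the identity $|X_P'| = h/\kappa$ are needed. You instead take literally the definition stated at the head of Section \ref{Sec:The_base_curves} --- the base curve is the evolute of the unidirectional flanks --- and compute the locus of centres of curvature of $\XFkpm$ from \eqref{Eq:XFkpm}. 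Your intermediate steps check out against the paper: $\XFkpm'(\ph) = \ii\,\ee^{\pm\ii\alpha}\,s'(\ph)\,g_\pm(\ph)\,T(\ph)$ with $g_\pm = \lak\kappa\cos\alpha \mp \sin\alpha$ is exactly the expression derived in Section \ref{Sec:Undercut} (and $g_\pm = 0$ reproduces the cusp condition \eqref{Eq:lambda*kappa}); the curvature $\kappa_F = \kappa/|g_\pm|$ and the normal $-\sgn(g_\pm)\,\ee^{\pm\ii\alpha}\,T$ recombine so that the centre of curvature is $\XFkpm - (g_\pm/\kappa)\,\ee^{\pm\ii\alpha}\,T$, in which the $k$-dependent terms $\lak\cos\alpha$ cancel, leaving \eqref{Eq:X_B}. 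What the paper's route buys is brevity: no second derivatives, no sign bookkeeping, and no degenerate case at the cusps (which you correctly dispose of by continuity). What your route buys is more structural content: it does not rely on the quoted envelope-of-lines characterization; it proves explicitly that all ``$\pm$''-flanks of all teeth share one and the same evolute, a fact the paper only recovers afterwards through the corollary identifying $\XFkpm$ with an involute of $\XBpm$; and it yields as a by-product the radius of curvature $|g_\pm|/|\kappa|$ of the flank, which, after substituting $\lak(\ph) = \lak(\phSkpm) - aI(\phSkpm,\ph)$ and the singular-point relation \eqref{Eq:lambda*cos(alpha)_S}, is precisely the expression of Corollary \ref{Coro:Curvature_radius}.
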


\begin{proof}
The base curve is obtained as the envelope of all straight lines that intersect the centrode at the same angle $\alpha$ \cite[p.\ 241]{Wunderlich}.
The two straight lines that intersect the centrode $X_P$ at point~$\ph$ with the angles $-\alpha$ and $+\alpha$, respectively, are given by
\beqn \label{Eq:LOA}
  X = X_P(\ph) + \mu\,T(\ph)\,\ee^{\pm\ii\alpha}\,,\quad
  \mu \in \R\,. 
\eeqn 
Now we use \eqref{Eq:X-1}, and choose, with $\mu = 1$ and $ \mu = 0$, respectively, in \eqref{Eq:LOA},
\begin{align*}
  A = A(\ph)
:= {} & X_P(\ph) + T(\ph)\,\ee^{\pm\ii\alpha}\,,\\[0.05cm] 
  B = B(\ph)
:= {} & X_P(\ph)\,.
\end{align*}
We have
\beq
  A(\ph) - B(\ph)
= T(\ph)\,\ee^{\pm\ii\alpha}  
\eeq
and, see \eqref{Eq:T'-2},
\beq
  (A(\ph) - B(\ph))'
= T'(\ph)\,\ee^{\pm\ii\alpha}  
= \ii\,h(\ph)\,T(\ph)\,\ee^{\pm\ii\alpha}\,.
\eeq
With this one finds
\begin{align*}
  [A-B,(A-B)']
= {} & [T\,\ee^{\pm\ii\alpha},\ii\,h\,T\,\ee^{\pm\ii\alpha}]  
= h\,[T\,\ee^{\pm\ii\alpha},\ii\,T\,\ee^{\pm\ii\alpha}]
= h\,T\,\ee^{\pm\ii\alpha}\,\overline{T\,\ee^{\pm\ii\alpha}}\,[1,\ii]\\[0.05cm]
= {} & h\,T\,\overline{T}\,\ee^{\pm\ii\alpha}\,\ee^{\mp\ii\alpha}\,[1,\ii]
= h\,.
\end{align*}
One also gets
\beq
  [A,B]
= [X_P+T\,\ee^{\pm\ii\alpha},X_P]  
= [X_P,X_P] + [T\,\ee^{\pm\ii\alpha},X_P]
= [T\,\ee^{\pm\ii\alpha},X_P]
\eeq
and
\begin{align*}
  [A,B]'
= {} & [T\,\ee^{\pm\ii\alpha},X_P]'
= [T'\,\ee^{\pm\ii\alpha},X_P] + [T\,\ee^{\pm\ii\alpha},X_P']
= [\ii\,h\,T\,\ee^{\pm\ii\alpha},X_P] + [T\,\ee^{\pm\ii\alpha},|X_P'|\,T]\\[0.05cm]
= {} & h\,[\ii\,T\,\ee^{\pm\ii\alpha},X_P] + |X_P'|\,[T\,\ee^{\pm\ii\alpha},T]  
= h\,[\ii\,T\,\ee^{\pm\ii\alpha},X_P] + |X_P'|\,T\,\overline{T}\,[\ee^{\pm\ii\alpha},1]\\[0.05cm]
= {} & h\,[\ii\,T\,\ee^{\pm\ii\alpha},X_P] - |X_P'|\,[1,\ee^{\pm\ii\alpha}]
= h\,[\ii\,T\,\ee^{\pm\ii\alpha},X_P] - |X_P'|\,\Imz\ee^{\pm\ii\alpha}\\[0.05cm]
= {} & h\,[\ii\,T\,\ee^{\pm\ii\alpha},X_P] \mp |X_P'|\sin\alpha\,.
\end{align*}
Thus we obtain by means of \eqref{Eq:X-1}
\begin{align*}
  X
= {} & \frac
	{[T\,\ee^{\pm\ii\alpha},X_P]\,\ii\,h\,T\,\ee^{\pm\ii\alpha}
	- \left(h\,[\ii\,T\,\ee^{\pm\ii\alpha},X_P] \mp |X_P'|\sin\alpha\right)T\,\ee^{\pm\ii\alpha}}
	{h}\\[0.05cm]  
= {} & \frac
	{\left([T\,\ee^{\pm\ii\alpha},X_P]\,\ii - [\ii\,T\,\ee^{\pm\ii\alpha},X_P]\right)h \pm |X_P'|\sin\alpha}
	{h}\,T\,\ee^{\pm\ii\alpha}\\[0.05cm]
= {} & \left([T\,\ee^{\pm\ii\alpha},X_P]\,\ii - [\ii\,T\,\ee^{\pm\ii\alpha},X_P]\right)T\,\ee^{\pm\ii\alpha}
  \pm \frac{1}{h}\,|X_P'|\,T\,\ee^{\pm\ii\alpha}\sin\alpha\,.	
\end{align*}
Now we get
\begin{align*}
& T\,\ee^{\pm\ii\alpha},X_P]\,\ii - [\ii\,T\,\ee^{\pm\ii\alpha},X_P]\\[0.05cm]
& = {} {-}\frac{1}{2}\left(T\,\ee^{\pm\ii\alpha}\,\overline{X}_P
		- \overline{T}\,\ee^{\mp\ii\alpha}\,X_P\right)
  - \frac{\ii}{2}\left(\ii\,T\,\ee^{\pm\ii\alpha}\,\overline{X}_P
		+ \ii\,\overline{T}\,\ee^{\mp\ii\alpha}\,X_P\right)\\[0.05cm]
& = {} {-}\frac{1}{2}\left(T\,\ee^{\pm\ii\alpha}\,\overline{X}_P
		- \overline{T}\,\ee^{\mp\ii\alpha}\,X_P\right)
  + \frac{1}{2}\left(\,T\,\ee^{\pm\ii\alpha}\,\overline{X}_P 
		+ \overline{T}\,\ee^{\mp\ii\alpha}\,X_P\right)\\[0.05cm]
& = {} \overline{T}\,\ee^{\mp\ii\alpha}\,X_P\,, 				    
\end{align*}
hence
\beq
  X
= X_P \pm \frac{1}{h}\,|X_P'|\,T\,\ee^{\pm\ii\alpha}\sin\alpha\,.  
\eeq
With \eqref{Eq:s'} and \eqref{Eq:kappa(phi)} we have
\beqn \label{Eq:|X_P'|-2}
  |X_P'(\ph)| = \frac{h(\ph)}{\kappa(\ph)}\,,
\eeqn
and so finally follows
\beq
  X = X_P(\ph) \pm \frac{\sin\alpha}{\kappa(\ph)}\,T(\ph)\,\ee^{\pm\ii\alpha}\,. \qedhere
\eeq
\end{proof}

Our next aim is to determine the equation of the tooth flanks as involutes of the base curves.
We start with the determination of the arc length element of the respective base curve $\XBpm$.
Differentiation of \eqref{Eq:X_B} with respect to $\ph$ under consideration of \eqref{Eq:T'-2}, \eqref{Eq:|X_P'|-2} and \eqref{Eq:T} yields
\begin{align*}
  \XBpm'(\ph)
= {} & X_P'(\ph) \pm \ee^{\pm\ii\alpha}\sin\alpha
  \left(\frac{T'(\ph)}{\kappa(\ph)} - \frac{\kappa'(\ph)\,T(\ph)}{\kappa^2(\ph)}\right)\\[0.05cm]
= {} & X_P'(\ph) \pm \ee^{\pm\ii\alpha}\sin\alpha
  \left(\frac{\ii\,h(\ph)\,T(\ph)}{\kappa(\ph)} - \frac{\kappa'(\ph)\,T(\ph)}{\kappa^2(\ph)}\right)\\[0.05cm]
= {} & \left[|X_P'(\ph)| \pm \ee^{\pm\ii\alpha}\sin\alpha
  \left(\ii\,|X_P'(\ph)| - \frac{\kappa'(\ph)}{\kappa^2(\ph)}\right)\right] T(\ph)
  \displaybreak[0]\\[0.05cm]
= {} & \left[|X_P'(\ph)| \pm (\cos\alpha \pm \ii\sin\alpha)\sin\alpha
  \left(\ii\,|X_P'(\ph)| - \frac{\kappa'(\ph)}{\kappa^2(\ph)}\right)\right] T(\ph)
  \displaybreak[0]\\[0.05cm]
= {} & \left[|X_P'(\ph)| \mp (\cos\alpha \pm \ii\sin\alpha)
  \left(\frac{\kappa'(\ph)}{\kappa^2(\ph)} - \ii\,|X_P'(\ph)|\right)\sin\alpha\right] T(\ph)
  \displaybreak[0]\\[0.05cm]
= {} & \left[|X_P'(\ph)|
  \mp \left(\frac{\kappa'(\ph)}{\kappa^2(\ph)}\,\cos\alpha \pm |X_P'(\ph)|\sin\alpha\right)\sin\alpha\right.\\[0.05cm]
& \hspace{1.4cm} \pm \left.\ii\left(|X_P'(\ph)|\cos\alpha \mp \frac{\kappa'(\ph)}{\kappa^2(\ph)}\,\sin\alpha\right)\sin\alpha\right] T(\ph)\\[0.05cm]
= {} & \left[|X_P'(\ph)|
  \mp \frac{\kappa'(\ph)}{\kappa^2(\ph)}\cos\alpha\sin\alpha - |X_P'(\ph)|\sin^2\alpha\right.\\[0.05cm]
& \hspace{1.4cm} \pm \left.\ii\left(|X_P'(\ph)|\cos\alpha \mp \frac{\kappa'(\ph)}{\kappa^2(\ph)}\,\sin\alpha\right)\sin\alpha\right] T(\ph)
  \displaybreak[0]\\[0.05cm]
= {} & \left[|X_P'(\ph)|\cos^2\alpha
  \mp \frac{\kappa'(\ph)}{\kappa^2(\ph)}\cos\alpha\sin\alpha\right.\\[0.05cm]
& \hspace{1.4cm} \pm \left.\ii\left(|X_P'(\ph)|\cos\alpha \mp \frac{\kappa'(\ph)}{\kappa^2(\ph)}\,\sin\alpha\right)\sin\alpha\right] T(\ph)
  \displaybreak[0]\\[0.05cm]
= {} & \left[\left(|X_P'(\ph)|\cos\alpha \mp \frac{\kappa'(\ph)}{\kappa^2(\ph)}\sin\alpha\right)\cos\alpha\right.\\[0.05cm]
& \hspace{1.4cm}  \pm \left.\ii\left(|X_P'(\ph)|\cos\alpha \mp \frac{\kappa'(\ph)}{\kappa^2(\ph)}\,\sin\alpha\right)\sin\alpha\right] T(\ph)
  \displaybreak[0]\\[0.05cm]
= {} & \left(|X_P'(\ph)|\cos\alpha \mp \frac{\kappa'(\ph)}{\kappa^2(\ph)}\,\sin\alpha\right)\ee^{\pm\ii\alpha}\,T(\ph)\,,
\end{align*}
hence
\beq
  |\XBpm'(\ph)|
= \left||X_P'(\ph)|\cos\alpha \mp \frac{\kappa'(\ph)}{\kappa^2(\ph)}\,\sin\alpha\right|.  
\eeq
Since we do not want to calculate the length of the curve $\XBpm$, but the length of its developments, it is convenient to use
\beq
  \left(|X_P'(\ph)|\cos\alpha \mp \frac{\kappa'(\ph)}{\kappa^2(\ph)}\,\sin\alpha\right)\dd\ph
\eeq
as length element.
This will simplify the calculations.
In addition, one avoids the case distinctions that become necessary if $\XBpm$ has cusps (see Fig.\ \ref{Fig:Evolute_and_involutes01}).
We denote by $\XFp^*$ the involute of the base curve $\XBp$ that we obtain when the singular point (cusp) $\phSp$ of $\XFp^*$ is freely chosen in the interval $0 \le \ph \le 2\pi$.
Using \eqref{Eq:|X_P'|-1} and \eqref{Eq:I}, one gets
\begin{align} \label{Eq:XFp^*}
  \XFp^*(\ph)
= {} & \XBp(\ph) + \left[
  \int_\ph^{\phSp}\left(|X_P'(\ph)|\cos\alpha - \frac{\kappa'(\ph)}{\kappa^2(\ph)}\,\sin\alpha\right)\dd\ph
  \right] T(\ph)\,\ee^{\ii\alpha}\nonumber\displaybreak[0]\\[0.05cm]
= {} & \XBp(\ph) + \left[
  a\cos\alpha\int_\ph^{\phSp}\frac{w(\ph)}{(1+\psi'(\ph))^2}\,\dd\ph
  + \sin\alpha\int_\ph^{\phSp}\left(-\frac{\kappa'(\ph)}{\kappa^2(\ph)}\right)\dd\ph
  \right] T(\ph)\,\ee^{\ii\alpha}\nonumber\displaybreak[0]\\[0.05cm]
= {} & \XBp(\ph) + \left[
  a\cos\alpha\,I(\ph,\phSp)
  + \sin\alpha\left(\frac{1}{\kappa(\phSp)}-\frac{1}{\kappa(\ph)}\right)
  \right] T(\ph)\,\ee^{\ii\alpha}\nonumber\\[0.05cm]
= {} & \XBp(\ph) - \left[
  a\cos\alpha\,I(\phSp,\ph)
  - \sin\alpha\left(\frac{1}{\kappa(\phSp)}-\frac{1}{\kappa(\ph)}\right)
  \right] T(\ph)\,\ee^{\ii\alpha}\,.  
\end{align}
For the involutes $\XFm^*$ of the base curve $\XBm$ we have
\begin{align} \label{Eq:XFm^*}
  \XFm^*(\ph)
= {} & \XBm(\ph) - \left[
  \int_{\phSm}^\ph\left(|X_P'(\ph)|\cos\alpha + \frac{\kappa'(\ph)}{\kappa^2(\ph)}\,\sin\alpha\right)\dd\ph
  \right] T(\ph)\,\ee^{-\ii\alpha}\displaybreak[0]\nonumber\\[0.05cm]
= {} & \XBm(\ph) - \left[
  a\cos\alpha\,I(\phSm,\ph) + \sin\alpha\left(\frac{1}{\kappa(\phSm)}-\frac{1}{\kappa(\ph)}\right)
  \right] T(\ph)\,\ee^{-\ii\alpha}\,,      
\end{align}
where $\phSm$ is the singular point (cusp) of $\XFm^*$.
So writing \eqref{Eq:XFp^*} and \eqref{Eq:XFm^*} together, we have
\beqn \label{Eq:XFpm^*}
  \XFpm^*(\ph)
= \XBpm(\ph) - \left[
  a\cos\alpha\,I(\phSpm,\ph) \mp \sin\alpha\left(\frac{1}{\kappa(\phSpm)}-\frac{1}{\kappa(\ph)}\right)
  \right] T(\ph)\,\ee^{\pm\ii\alpha}\,.  
\eeqn
Applying Lemma \ref{Lem:X_B} yields
\begin{align*}
  \XFpm^*(\ph)
= {} & X_P(\ph) \pm \frac{\sin\alpha}{\kappa(\ph)}\,T(\ph)\,\ee^{\pm\ii\alpha}\nonumber\\
& - \bigg[a\cos\alpha\,I(\phSpm,\ph)
  \mp \left.\sin\alpha\left(\frac{1}{\kappa(\phSpm)}-\frac{1}{\kappa(\ph)}\right)
  \right] T(\ph)\,\ee^{\pm\ii\alpha}\nonumber\\[0.05cm]
= {} & X_P(\ph) - \left(a\cos\alpha\,I(\phSpm,\ph)
  \mp \frac{\sin\alpha}{\kappa(\phSpm)}\right) T(\ph)\,\ee^{\pm\ii\alpha}\,.  
\end{align*}
We formulate our result as theorem.

\begin{thm} \label{Thm:XFpm^*}
A parametric equation of the involutes $\XFpm^*$ of the base curve $\XBpm$ is given by
\beq
  \XFpm^*(\ph)
= X_P(\ph) - \left(a\cos\alpha\,I(\phSpm,\ph)
  \mp \frac{\sin\alpha}{\kappa(\phSpm)}\right) T(\ph)\,\ee^{\pm\ii\alpha}\,,  
\eeq
where $\phSpm$ is the value of $\ph$ at the singular point of $\XFpm^*$, $I$ is the integral \eqref{Eq:I}, and $\kappa$ and $T$ are the curvature \eqref{Eq:kappa(phi)} and the tangent unit vector \eqref{Eq:T}, respectively, of the centrode $X_P$.  
\end{thm}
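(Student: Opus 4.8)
The plan is to realize $\XFpm^*$ as a classical involute of the base curve $\XBpm$ from Lemma \ref{Lem:X_B}, that is, as the locus traced by the free end of a taut string unwound from $\XBpm$. For a regular base curve parametrized by $\ph$, with unit tangent vector $\XBpm'/|\XBpm'|$, the involute having its cusp at $\ph=\phSpm$ is obtained by subtracting from $\XBpm(\ph)$ the signed arc length measured from $\phSpm$, multiplied by that unit tangent. So the first task is to differentiate \eqref{Eq:X_B} and to read off both the direction and the length element of $\XBpm$.

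First I would compute $\XBpm'(\ph)$ directly from \eqref{Eq:X_B}, inserting $T'=\ii h\,T$ from \eqref{Eq:T'-2} and the identity $|X_P'|=h/\kappa$ from \eqref{Eq:|X_P'|-2}. The decisive outcome of this calculation is that the real and imaginary contributions recombine so that $\XBpm'$ is a \emph{real} scalar multiple of $\ee^{\pm\ii\alpha}T(\ph)$, namely
\beq
  \XBpm'(\ph)
= \left(|X_P'(\ph)|\cos\alpha \mp \frac{\kappa'(\ph)}{\kappa^2(\ph)}\sin\alpha\right)\ee^{\pm\ii\alpha}\,T(\ph).
\eeq
This identifies $\ee^{\pm\ii\alpha}T(\ph)$ as the tangent direction of the base curve (equivalently, the normal direction of the generated flank), which is precisely the direction along which the string unwinds.

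With the direction secured, I would use the \emph{signed} length element $\bigl(|X_P'|\cos\alpha \mp (\kappa'/\kappa^2)\sin\alpha\bigr)\dd\ph$ rather than $|\XBpm'|\,\dd\ph$; retaining the sign avoids the case distinctions forced by the cusps of $\XBpm$. Integrating from $\phSpm$ to $\ph$ splits into two pieces: the term $|X_P'|\cos\alpha$ yields $a\cos\alpha\,I(\phSpm,\ph)$ by \eqref{Eq:|X_P'|-1} and the definition \eqref{Eq:I} of $I$, while $\mp(\kappa'/\kappa^2)\sin\alpha$ integrates to $\pm\sin\alpha\bigl(1/\kappa(\ph)-1/\kappa(\phSpm)\bigr)$ since $-\kappa'/\kappa^2=(1/\kappa)'$. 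Subtracting this signed arc length along $\ee^{\pm\ii\alpha}T(\ph)$ from $\XBpm(\ph)$, and then substituting the explicit form of $\XBpm(\ph)$ from Lemma \ref{Lem:X_B}, causes the two $\pm\sin\alpha/\kappa(\ph)$ terms to cancel, leaving exactly
\beq
  \XFpm^*(\ph)
= X_P(\ph) - \left(a\cos\alpha\,I(\phSpm,\ph) \mp \frac{\sin\alpha}{\kappa(\phSpm)}\right) T(\ph)\,\ee^{\pm\ii\alpha}.
\eeq

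The hard part will be the differentiation step: the computation of $\XBpm'(\ph)$ is lengthy and the sign bookkeeping for the paired $\pm/\mp$ symbols is delicate, so the genuine effort lies in verifying the collapse to a single real multiple of $\ee^{\pm\ii\alpha}T(\ph)$. Once that simplification is in hand, the subsequent integration and the cancellation against Lemma \ref{Lem:X_B} are routine.
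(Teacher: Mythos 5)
Your proposal is correct and follows essentially the same route as the paper: differentiating \eqref{Eq:X_B} with \eqref{Eq:T'-2} and \eqref{Eq:|X_P'|-2} to obtain $\XBpm'(\ph) = \bigl(|X_P'(\ph)|\cos\alpha \mp \tfrac{\kappa'(\ph)}{\kappa^2(\ph)}\sin\alpha\bigr)\ee^{\pm\ii\alpha}T(\ph)$, then unwinding with the \emph{signed} length element (precisely to avoid the cusp case distinctions, as the paper also notes), integrating via \eqref{Eq:|X_P'|-1}, \eqref{Eq:I} and $-\kappa'/\kappa^2=(1/\kappa)'$, and cancelling the $\pm\sin\alpha/\kappa(\ph)$ terms against Lemma \ref{Lem:X_B}. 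The only cosmetic difference is that you treat both signs in one uniform computation, whereas the paper writes the ``$+$'' and ``$-$'' involutes separately before combining them.
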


It should be emphasized that $\kappa(\phSpm)$ in Theorem \ref{Thm:XFpm^*} is the curvature of the {\bf centrode} $\boldsymbol{X_P}$ at the singular point $\phSpm$ of the {\bf involute} $\boldsymbol{\XFpm^*}$.   

It remains to be shown that the involutes obtained in this way are indeed the tooth flanks according to Theorem~\ref{Thm:X_F} (cf.\ the quote at the beginning of this section).
This will be proved in the following corollary. 

\begin{coro}
The flank curve $\XFkpm$, $k \in \{1,\ldots,z_1\}$, generated with the rack-cutter is identical to the involute $\XFkpm^*$ of the base curve $\XBpm$ whose parametric equation is
\beq
  \XFkpm^*(\ph)
= X_P(\ph) - \left(a\cos\alpha\,I(\phSkpm,\ph)
  \mp \frac{\sin\alpha}{\kappa(\phSkpm)}\right) T(\ph)\,\ee^{\pm\ii\alpha}\,.  
\eeq
\end{coro}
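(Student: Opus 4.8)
The plan is to prove the corollary by showing that the two parametric equations describe the same curve coefficient-by-coefficient. Both the rack-cutter flank of Theorem~\ref{Thm:X_F} and the involute $\XFkpm^*$ of Theorem~\ref{Thm:XFpm^*} (taken with its cusp fixed at $\phSpm=\phSkpm$) have the common structure $X_P(\ph)+c(\ph)\,T(\ph)\,\ee^{\pm\ii\alpha}$ with the \emph{same} base point $X_P(\ph)$ and the same direction factor $T(\ph)\,\ee^{\pm\ii\alpha}$. Hence it suffices to match the scalar coefficients. From \eqref{Eq:XFkpm} the rack-cutter coefficient is $\lak(\ph)\cos\alpha$, while Theorem~\ref{Thm:XFpm^*} supplies $-\big(a\cos\alpha\,I(\phSkpm,\ph)\mp\sin\alpha/\kappa(\phSkpm)\big)$, so the whole corollary reduces to the single identity
\beq
  \lak(\ph)\cos\alpha
= -a\cos\alpha\,I(\phSkpm,\ph) \pm \frac{\sin\alpha}{\kappa(\phSkpm)}\,.
\eeq

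First I would exploit the additivity of the integral \eqref{Eq:I}, writing $I(\phSkpm,\ph)=I(\phSkpm,\chi(k))+I(\chi(k),\ph)$, in order to separate the only genuinely $\ph$-dependent contribution. Since by definition $\lak(\ph)=\pm\tfrac{\pi m}{4}-a\,I(\chi(k),\ph)$, the two terms proportional to $I(\chi(k),\ph)$ cancel on the two sides. Using $I(\phSkpm,\chi(k))=-I(\chi(k),\phSkpm)$, the claim collapses to the $\ph$-independent relation
\beq
  \pm\frac{\pi m}{4}\cos\alpha
= a\cos\alpha\,I(\chi(k),\phSkpm) \pm \frac{\sin\alpha}{\kappa(\phSkpm)}\,,
\eeq
which is simply the identity evaluated at the cusp.

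The decisive step is to invoke the singular-point condition \eqref{Eq:lambda*kappa}, namely $\lak(\phSkpm)\,\kappa(\phSkpm)=\pm\tan\alpha$, which rearranges to $\pm\sin\alpha/\kappa(\phSkpm)=\lak(\phSkpm)\cos\alpha$. Substituting $\lak(\phSkpm)=\pm\tfrac{\pi m}{4}-a\,I(\chi(k),\phSkpm)$ into this expression produces a term $-a\cos\alpha\,I(\chi(k),\phSkpm)$ that cancels the remaining integral, leaving exactly $\pm\tfrac{\pi m}{4}\cos\alpha$ and thereby closing the identity.

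The main obstacle here is conceptual rather than computational: one must recognize that the cusp $\phSpm$, which is \emph{freely chosen} in Theorem~\ref{Thm:XFpm^*}, has to be pinned to the particular value $\phSkpm$ fixed by \eqref{Eq:lambda*kappa} — the singular point of the rack-generated flank $\XFkpm$ itself. Only under this identification does $\kappa(\phSkpm)$ satisfy the relation that triggers the cancellation; for a generic cusp the two curves would differ by a rigid translation along $T(\ph)\,\ee^{\pm\ii\alpha}$. The remaining difficulty is the careful bookkeeping of the $\pm/\mp$ conventions, which I would verify to hold simultaneously for the ``$+$'' and ``$-$'' flanks.
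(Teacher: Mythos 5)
Your proposal is correct and takes essentially the same route as the paper's own proof: both arguments rest on exactly the same three ingredients --- the singular-point condition \eqref{Eq:lambda*kappa} (in the form \eqref{Eq:lambda*cos(alpha)_S}, $\lak(\phSkpm)\cos\alpha = \pm\sin\alpha/\kappa(\phSkpm)$), the definition \eqref{Eq:lambda_k} of $\lak$, and additivity of the integral $I$ --- with the involute's cusp pinned at $\phSkpm$; the paper merely organizes the computation as one chain transforming the involute formula into $X_P(\ph)+\lak(\ph)\,T(\ph)\,\ee^{\pm\ii\alpha}\cos\alpha$, whereas you match scalar coefficients, cancel the $\ph$-dependent terms first, and verify the remaining constant identity at the cusp. (One immaterial quibble: for a generic cusp choice the two curves would not differ by a rigid translation, but would be parallel involutes of the same base curve, since the offset is a constant multiple of the varying direction $T(\ph)\,\ee^{\pm\ii\alpha}$.)
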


\begin{proof}
The midpoint of tooth $k$ on the centrode $X_P$ is given by $X_P(\chi(k))$ with $\chi(k)$ according to \eqref{Eq:chi(k)}.
Condition \eqref{Eq:lambda*cos(alpha)_S} for the value $\phSkpm$ of $\ph$ at the singular point of the flank $\XFkpm$ states that
\beq
  \lak(\phSkpm)\cos\alpha
= \pm \frac{\sin\alpha}{\kappa(\phSkpm)}\,.  
\eeq
Using \eqref{Eq:lambda_k}, it follows that
\begin{align*}
  \XFkpm^*(\ph)
= {} & X_P(\ph) - \left(a\cos\alpha\,I(\phSkpm,\ph)
  \mp \frac{\sin\alpha}{\kappa(\phSkpm)}\right) T(\ph)\,\ee^{\pm\ii\alpha}\\[0.05cm] 
= {} & X_P(\ph) - \big[a\cos\alpha\,I(\phSkpm,\ph)
  \mp \big({\pm}\lak(\phSkpm)\cos\alpha\big)\big]\, T(\ph)\,\ee^{\pm\ii\alpha}\\[0.05cm]  
= {} & X_P(\ph) - \big[aI(\phSkpm,\ph)
  - \lak(\phSkpm)\big]\, T(\ph)\,\ee^{\pm\ii\alpha}\cos\alpha\\[0.05cm]
= {} & X_P(\ph) - \left[aI(\phSkpm,\ph)
  \mp \frac{\pi m}{4} + aI(\chi(k),\phSkpm)\right] T(\ph)\,\ee^{\pm\ii\alpha}\cos\alpha\\[0.05cm]
= {} & X_P(\ph) - \left[\mp \frac{\pi m}{4} + a\,\big(I(\chi(k),\phSkpm) + I(\phSkpm,\ph)\big)\right]
  T(\ph)\,\ee^{\pm\ii\alpha}\cos\alpha\\[0.05cm]
= {} & X_P(\ph) + \left(\pm \frac{\pi m}{4} - aI(\chi(k),\ph)\right)
  T(\ph)\,\ee^{\pm\ii\alpha}\cos\alpha\\[0.05cm]
= {} & X_P(\ph) + \lak(\ph)\,T(\ph)\,\ee^{\pm\ii\alpha}\cos\alpha\,.  
\end{align*}
One sees that the last line is identical to the parametric equation $\XFkpm(\ph)$ of $\XFkpm$ in Theorem~\ref{Thm:X_F}.
\end{proof}

The following corollary is important for the calculation, or at least estimation, of the tooth flank pressure by means of the Hertzian theory.

\begin{coro} \label{Coro:Curvature_radius}
The radius of curvature at point $\ph$ of the involute $\XFkpm^*\equiv\XFkpm$ is given by
\beq
  \left|a\cos\alpha\,I(\phSkpm) \mp \sin\alpha 
  \left(\frac{1}{\kappa(\phSkpm,\ph)} - \frac{1}{\kappa(\ph)}\right)
  \right|. 
\eeq
\end{coro}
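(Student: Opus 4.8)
The plan is to read the radius of curvature off a single differentiation, exploiting the involute structure already present in the formula for $\XFkpm^*$. Abbreviate the scalar coefficient as
\beq
  L_\pm(\ph)
:= a\cos\alpha\,I(\phSkpm,\ph)
   \mp \sin\alpha\left(\frac{1}{\kappa(\phSkpm)} - \frac{1}{\kappa(\ph)}\right),
\eeq
so that the asserted radius is $|L_\pm(\ph)|$ (this also fixes the two evident misprints in the statement, $I(\phSkpm)\to I(\phSkpm,\ph)$ and $\kappa(\phSkpm,\ph)\to\kappa(\phSkpm)$). By \eqref{Eq:XFpm^*} we have $\XFkpm^*(\ph) = \XBpm(\ph) - L_\pm(\ph)\,T(\ph)\,\ee^{\pm\ii\alpha}$, and the differentiation of $\XBpm$ carried out just before Theorem \ref{Thm:XFpm^*} shows that $\XBpm'(\ph) = L_\pm'(\ph)\,T(\ph)\,\ee^{\pm\ii\alpha}$, since $L_\pm'(\ph)$ is exactly the arc-length density $|X_P'(\ph)|\cos\alpha \mp \frac{\kappa'(\ph)}{\kappa^2(\ph)}\sin\alpha$ of the base curve (use \eqref{Eq:|X_P'|-1} for the first term and differentiate $1/\kappa$ for the second).

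First I would differentiate. Writing $U(\ph) := T(\ph)\,\ee^{\pm\ii\alpha}$ for the base-curve tangent direction, so that $|U|=1$ and $U'(\ph) = \ii\,h(\ph)\,U(\ph)$ by \eqref{Eq:T'-2}, the $\XBpm'$ term cancels against $L_\pm'\,U$, leaving the purely normal velocity
\beq
  \big(\XFkpm^*\big)'(\ph)
= -L_\pm(\ph)\,U'(\ph)
= -\ii\,h(\ph)\,L_\pm(\ph)\,U(\ph).
\eeq
This is the familiar involute feature that the generated point moves perpendicular to the base-curve tangent.

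Next I would extract the oriented curvature of the involute from the complex expression $\kappa_{\mathrm{inv}} = \Imz\big(\overline{Z'}\,Z''\big)/|Z'|^3$ with $Z := \XFkpm^*$. A second differentiation, again using $U'=\ii h U$, gives $Z''(\ph) = \big[h^2 L_\pm - \ii(h' L_\pm + h L_\pm')\big]U(\ph)$; since $h,L_\pm,L_\pm'$ are real and $|U|^2=1$, one finds $\overline{Z'}\,Z'' = \ii\,h^3 L_\pm^2 + h L_\pm(h' L_\pm + h L_\pm')$, hence $\Imz(\overline{Z'}\,Z'') = h^3 L_\pm^2$ and $|Z'|^3 = |h|^3|L_\pm|^3$. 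Therefore $\kappa_{\mathrm{inv}}(\ph) = \sgn(h(\ph))/|L_\pm(\ph)|$, and the radius of curvature is $1/|\kappa_{\mathrm{inv}}(\ph)| = |L_\pm(\ph)|$, as claimed.

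The computation itself is short; the only care needed is bookkeeping — keeping the real scalars $h,L_\pm,L_\pm'$ apart from the unit factor $U$ in forming $\overline{Z'}Z''$, and noting $h^3/|h|^3=\sgn(h)$ and $L_\pm^2/|L_\pm|^3 = 1/|L_\pm|$ so that every stray sign is absorbed into the absolute value. As a cross-check one may bypass the curvature formula altogether: the present section builds $\XFkpm^*$ as the involute obtained by unwinding a taut string along $\XBpm$ from the cusp at $\phSkpm$, the unwound length from $\phSkpm$ to $\ph$ being precisely $L_\pm(\ph)$; by the classical evolute--involute relationship the centre of curvature of the involute sits at the point of tangency on $\XBpm$, so the radius of curvature equals this length $|L_\pm(\ph)|$.
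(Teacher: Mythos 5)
Your proof is correct, and it reaches the result by a genuinely different route than the paper. The paper's proof is essentially one sentence: since $\XBpm$ is the evolute of the involute $\XFkpm^*$, the bracketed coefficient in \eqref{Eq:XFpm^*} --- your $L_\pm(\ph)$ --- is, up to sign, the signed radius of curvature of $\XFkpm^*$ at $\ph$, and the claim follows by taking absolute values; this is exactly the classical taut-string argument that you offer only as a final ``cross-check''. Your main argument instead \emph{proves} that classical fact in this concrete setting by direct computation: the key observation $\XBpm'(\ph)=L_\pm'(\ph)\,T(\ph)\,\ee^{\pm\ii\alpha}$ (the arc-length identity established just before Theorem \ref{Thm:XFpm^*}) makes the tangential terms cancel, giving $(\XFkpm^*)'(\ph)=-\ii\,h(\ph)\,L_\pm(\ph)\,U(\ph)$ with $U=T\,\ee^{\pm\ii\alpha}$, after which one more differentiation and the curvature formula $\Imz\big(\overline{Z'}\,Z''\big)/|Z'|^3$ yield the oriented curvature $\sgn(h(\ph))/|L_\pm(\ph)|$ and hence the radius $|L_\pm(\ph)|$. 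I checked the algebra; it is sound. What your route buys is self-containedness (no appeal to evolute--involute theory) plus a sharper by-product, namely the \emph{signed} curvature of the flank; what it costs is length and an explicit regularity caveat, $Z'\ne 0$, i.e.\ $\ph\ne\phSkpm$ and $h(\ph)\ne 0$ --- a restriction the paper needs implicitly anyway, since at the cusp the radius degenerates to $0$ and where $h$ vanishes the base curve runs off to infinity. You were also right to repair the two misprints in the statement ($I(\phSkpm)$ for $I(\phSkpm,\ph)$, and $\kappa(\phSkpm,\ph)$ for $\kappa(\phSkpm)$); for what it is worth, the paper's own proof contains a third, an inner $\mp$ where \eqref{Eq:XFpm^*} has $-$.
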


\begin{proof}
Since $\XBpm$ is the evolute of the involutes $\XFkpm^*$, from \eqref{Eq:XFpm^*} it is known that
\beq
  -\left[a\cos\alpha\,I(\phSkpm,\ph)
  \mp \sin\alpha\left(\frac{1}{\kappa(\phSkpm)} \mp \frac{1}{\kappa(\ph)}\right)\right]
\eeq
is the signed radius of curvature at point $\XFkpm^*(\ph)$ of $\XFkpm^*$.
The proposition of Corollary \ref{Coro:Curvature_radius} follows immediately. 
\end{proof}

\subsection{Base curve of the driven gear}

\begin{lem} \label{Lem:Xi_B}
A parametric equation of the base curves $\XiBpm$ of the driven gear is given by
\beqn \label{Eq:Xi_B}
  \XiBpm(\ph) = \varXi_P(\ph) \pm \frac{\sin\alpha}{\kaXi(\ph)}\,\TXi(\ph)\,\ee^{\pm\ii\alpha}\,,\quad
  0 \le \ph \le 2\pi\,, 
\eeqn
where $\kaXi(\ph)$ and $\TXi(\ph)$ are the curvature and the tangent unit vector, respectively, of the centrode $\varXi_P$ at point $\ph$. 
\end{lem}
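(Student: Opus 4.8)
The plan is to reproduce the proof of Lemma~\ref{Lem:X_B} line for line, simply transporting every object from the drive-gear centrode to the driven-gear centrode under the dictionary $X_P\mapsto\varXi_P$, $T\mapsto\TXi$, $h\mapsto\hXi$, $\kappa\mapsto\kaXi$. First I would invoke the same geometric characterization of the base curve \cite[p.~241]{Wunderlich}: $\XiBpm$ is the envelope of the one-parameter family of straight lines that meet the centrode $\varXi_P$ everywhere at the fixed angle $\alpha$. At the point $\varXi_P(\ph)$ the two such lines, corresponding to the angles $+\alpha$ and $-\alpha$, are
\beq
  \varXi = \varXi_P(\ph) + \mu\,\TXi(\ph)\,\ee^{\pm\ii\alpha}\,,\qquad \mu\in\R\,.
\eeq
This is the exact analogue of \eqref{Eq:LOA}, with $\TXi$ in place of $T$.

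Next I would feed this family into the envelope formula \eqref{Eq:X-1}, choosing $A(\ph):=\varXi_P(\ph)+\TXi(\ph)\,\ee^{\pm\ii\alpha}$ and $B(\ph):=\varXi_P(\ph)$, so that $A-B=\TXi\,\ee^{\pm\ii\alpha}$ and, by \eqref{Eq:TXi'}, $(A-B)'=\ii\,\hXi\,\TXi\,\ee^{\pm\ii\alpha}$. The three exterior products are then computed with the same calculation rules of Appendix~\ref{App:Exterior_product} and with $\TXi'=\ii\,\hXi\,\TXi$, giving $[A-B,(A-B)']=\hXi$, $[A,B]=[\TXi\,\ee^{\pm\ii\alpha},\varXi_P]$, and (using $\varXi_P'=|\varXi_P'|\,\TXi$ and $\TXi\,\overline{\TXi}=1$)
\beq
  [A,B]' = \hXi\,[\ii\,\TXi\,\ee^{\pm\ii\alpha},\varXi_P]\;\mp\;|\varXi_P'|\sin\alpha\,.
\eeq
Substituting these into \eqref{Eq:X-1} and simplifying the $\ee^{\pm\ii\alpha}$, $\TXi\,\overline{\TXi}$ and $\TXi^2\,\overline{\varXi}_P$ terms exactly as in Lemma~\ref{Lem:X_B} collapses the numerator to $\pm\tfrac{1}{\hXi}\,|\varXi_P'|\,\TXi\,\ee^{\pm\ii\alpha}\sin\alpha$. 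The final step is to replace $|\varXi_P'|$ by $\hXi/\kaXi$: this identity follows from $\hXi=s'\kaXi$ in \eqref{Eq:hXi=s'*kaXi} together with $s'=|\varXi_P'|$ in \eqref{Eq:s'(phi)=|Xi_P'(phi)|}, precisely as \eqref{Eq:|X_P'|-2} was obtained for the drive gear, yielding $\varXi=\varXi_P(\ph)\pm\frac{\sin\alpha}{\kaXi(\ph)}\,\TXi(\ph)\,\ee^{\pm\ii\alpha}$, which is \eqref{Eq:Xi_B}.

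Since the algebra is identical in structure to the drive-gear case, there is no genuinely hard step; the only point demanding care is bookkeeping of signs. The centrode $\varXi_P$ is \emph{positively} oriented, whereas $X_P$ was negatively oriented, so I would double-check that the orientation conventions are consistently absorbed into the sign of $\kaXi$ (via \eqref{Eq:hXi=s'*kaXi}) and do not force an extra sign change in the $\pm$ branches of the final formula. Because the envelope computation manipulates only the unit tangent $\TXi$ and the products $[\,\cdot\,,\cdot\,]$, which are insensitive to the global orientation, I expect the signs to fall out exactly as written in \eqref{Eq:Xi_B}; confirming this is the sole place where the proof is not a pure copy of Lemma~\ref{Lem:X_B}.
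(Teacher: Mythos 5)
Your proposal is correct and is essentially identical to the paper's own proof: the paper simply writes down the family of lines $\varXi = \varXi_P(\ph) + \mu\,\TXi(\ph)\,\ee^{\pm\ii\alpha}$ and declares the rest ``analogous to Lemma \ref{Lem:X_B}'', which is precisely the analogy you have carried out in detail (with the correct exterior-product computations, the identity $\TXi' = \ii\,\hXi\,\TXi$, and the replacement $|\varXi_P'| = \hXi/\kaXi$ via \eqref{Eq:hXi=s'*kaXi} and \eqref{Eq:s'(phi)=|Xi_P'(phi)|}). Your closing observation on orientation is also sound: since $\kaXi$ is the \emph{oriented} curvature, the orientation difference between $X_P$ and $\varXi_P$ is absorbed into its sign and no extra sign change appears in \eqref{Eq:Xi_B}.
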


\begin{proof}
The two straight lines that intersect the centrode $\varXi_P$ at point~$\ph$ with the angles $-\alpha$ and $+\alpha$, respectively, are given by
\beq
  \varXi = \varXi_P(\ph) + \mu\,\TXi(\ph)\,\ee^{\pm\ii\alpha}\,,\quad
  \mu \in \R\,. 
\eeq
The proof is now analogous to that of Lemma \ref{Lem:X_B}.
\end{proof}

\begin{thm} \label{Thm:XiFpm^*}
A parametric equation of the involutes $\XiFpm^*$ of the base curve $\XiBpm$ is given by
\beq
  \XiFpm^*(\ph)
= \varXi_P(\ph) - \left(a\cos\alpha\,I(\phSpm,\ph)
  \mp \frac{\sin\alpha}{\kaXi(\phSpm)}\right) \TXi(\ph)\,\ee^{\pm\ii\alpha}\,,  
\eeq
where $\phSpm$ is value of $\ph$ at the singular point of $\XiFpm^*$, $I$ is the integral \eqref{Eq:I}, and $\kaXi$ and $\TXi$ are the curvature \eqref{Eq:kaXi(phi)} and the tangent unit vector \eqref{Eq:TXi}, respectively, of the centrode $\varXi_P$.
\end{thm}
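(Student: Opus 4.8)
The plan is to mirror the proof of Theorem~\ref{Thm:XFpm^*} essentially verbatim, replacing the drive-gear objects $X_P$, $T$, $\kappa$, $h$ by their driven-gear counterparts $\varXi_P$, $\TXi$, $\kaXi$, $\hXi$. The starting point is the base-curve equation from Lemma~\ref{Lem:Xi_B}, namely $\XiBpm(\ph) = \varXi_P(\ph) \pm \frac{\sin\alpha}{\kaXi(\ph)}\,\TXi(\ph)\,\ee^{\pm\ii\alpha}$, and the involute $\XiFpm^*$ is then produced by the string (unwinding) construction. So the first thing I need is the arc-length element of $\XiBpm$.

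First I would differentiate the Lemma~\ref{Lem:Xi_B} formula with respect to $\ph$, using $\TXi'(\ph) = \ii\,\hXi(\ph)\,\TXi(\ph)$ from \eqref{Eq:TXi'} together with the analogue $|\varXi_P'(\ph)| = \hXi(\ph)/\kaXi(\ph)$ of \eqref{Eq:|X_P'|-2} (which follows from \eqref{Eq:hXi=s'*kaXi} and \eqref{Eq:s'(phi)=|Xi_P'(phi)|}). Exactly as in the drive-gear computation, the real and imaginary parts collapse into a single factor, yielding
\beq
  \XiBpm'(\ph)
= \left(|\varXi_P'(\ph)|\cos\alpha \mp \frac{\kaXi'(\ph)}{\kaXi^2(\ph)}\,\sin\alpha\right)\ee^{\pm\ii\alpha}\,\TXi(\ph)\,.
\eeq
This identifies the scalar length element $\left(|\varXi_P'(\ph)|\cos\alpha \mp \frac{\kaXi'(\ph)}{\kaXi^2(\ph)}\,\sin\alpha\right)\dd\ph$, which I would keep \emph{signed} rather than taking its absolute value, so as to avoid case distinctions at the cusps of $\XiBpm$.

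Next I would assemble the involute by subtracting, along the tangent direction $\TXi(\ph)\,\ee^{\pm\ii\alpha}$, the developed length measured from the freely chosen cusp $\phSpm$. Integrating the two pieces using $|\varXi_P'(\ph)| = aw(\ph)/(1+\psi'(\ph))^2$ from \eqref{Eq:|Xi_P'|} together with \eqref{Eq:I} gives $\int |\varXi_P'|\cos\alpha\,\dd\ph = a\cos\alpha\,I(\phSpm,\ph)$, while $\int -\kaXi'/\kaXi^2\,\dd\ph = 1/\kaXi$ evaluated between the limits produces $\sin\alpha\,(1/\kaXi(\phSpm)-1/\kaXi(\ph))$. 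This yields the intermediate form
\beq
  \XiFpm^*(\ph)
= \XiBpm(\ph) - \left[a\cos\alpha\,I(\phSpm,\ph) \mp \sin\alpha\left(\frac{1}{\kaXi(\phSpm)}-\frac{1}{\kaXi(\ph)}\right)\right] \TXi(\ph)\,\ee^{\pm\ii\alpha}\,,
\eeq
the exact analogue of \eqref{Eq:XFpm^*}. Finally, substituting $\XiBpm(\ph)$ from Lemma~\ref{Lem:Xi_B} cancels the $\frac{\sin\alpha}{\kaXi(\ph)}\,\TXi(\ph)\,\ee^{\pm\ii\alpha}$ term against the $+\sin\alpha/\kaXi(\ph)$ piece inside the bracket, leaving precisely the claimed expression.

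The main obstacle is not conceptual but bookkeeping: the differentiation step carries the coupled $\pm$/$\mp$ signs through the same chain of trigonometric regroupings as in Theorem~\ref{Thm:XFpm^*}, and one must verify that the orientation of $\varXi_P$ --- which, unlike $X_P$, is positively oriented --- does not flip any sign in the string construction. Because the construction is written purely in terms of $\kaXi$ and $\TXi$ and relies only on the structural identities $\TXi'=\ii\hXi\TXi$ and $|\varXi_P'|=\hXi/\kaXi$, both of which hold irrespective of the sign of $\kaXi$, the formal computation transfers unchanged; the orientation difference is already absorbed into the sign of $\kaXi$ itself.
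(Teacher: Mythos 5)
Your proposal is correct and is precisely the argument the paper intends: its proof of Theorem \ref{Thm:XiFpm^*} consists of the single remark that it is ``analogous to that of Theorem \ref{Thm:XFpm^*}'', and you have carried out exactly that analogy --- differentiating the Lemma \ref{Lem:Xi_B} base curve via $\TXi'=\ii\hXi\TXi$ and $|\varXi_P'|=\hXi/\kaXi$, keeping the signed length element, unwinding from the cusp $\phSpm$, and cancelling against the base-curve term. Your closing observation that the orientation difference of $\varXi_P$ is absorbed into the sign of $\kaXi$, so no sign flips occur, is also correct and addresses the only point where the analogy could conceivably break.
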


\begin{proof}
The proof is analogous to that of Theorem \ref{Thm:XFpm^*}.
\end{proof}

\begin{coro}
The flank curve $\XiFkpm$, $k \in \{1,\ldots,z_2\}$, generated with the rack-cutter is identical to the involute $\XiFkpm^*$ of the base curve $\XiBpm$ whose parametric equation is
\beq
  \XiFkpm^*(\ph)
= \varXi_P(\ph) - \left(a\cos\alpha\,I(\phSkpm,\ph)
  \mp \frac{\sin\alpha}{\kaXi(\phXiSkpm)}\right) \TXi(\ph)\,\ee^{\pm\ii\alpha}\,,  
\eeq
where $\phXiSkpm$ is the value of $\ph$ at the singular point of $\XiFkpm$.
\end{coro}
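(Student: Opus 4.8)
The plan is to repeat, essentially verbatim but with $\varXi_P$, $\TXi$ and $\kaXi$ in place of $X_P$, $T$ and $\kappa$, the argument that established the analogous corollary for the drive gear. First I would specialize Theorem~\ref{Thm:XiFpm^*}, taking the free cusp parameter $\phSpm$ appearing there to be the actual singular point $\phXiSkpm$ of the flank of tooth space $k$, so that
\beq
  \XiFkpm^*(\ph)
= \varXi_P(\ph) - \left(a\cos\alpha\,I(\phXiSkpm,\ph)
  \mp \frac{\sin\alpha}{\kaXi(\phXiSkpm)}\right) \TXi(\ph)\,\ee^{\pm\ii\alpha}\,.
\eeq

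Next I would bring in the singular-point condition \eqref{Eq:lambda*kaXi}, which for tooth space $k$ reads $\lak(\phXiSkpm)\,\kaXi(\phXiSkpm) = \pm\tan\alpha$; multiplying both sides by $\cos\alpha/\kaXi(\phXiSkpm)$ yields the key identity
\beq
  \lak(\phXiSkpm)\cos\alpha = \pm\frac{\sin\alpha}{\kaXi(\phXiSkpm)}\,.
\eeq
Substituting this for the curvature term converts $\mp\sin\alpha/\kaXi(\phXiSkpm)$ into $-\lak(\phXiSkpm)\cos\alpha$ (because $\mp(\pm\,\cdot\,) = -\,\cdot\,$), so that $\cos\alpha$ can be factored out of the whole bracket, leaving $a\,I(\phXiSkpm,\ph) - \lak(\phXiSkpm)$.

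Then I would expand $\lak(\phXiSkpm)$ through its definition \eqref{Eq:lambda_k}, namely $\lak(\phXiSkpm) = \pm\frac{\pi m}{4} - a\,I(\chi(k),\phXiSkpm)$, and use additivity of the arc-length integral, $I(\chi(k),\phXiSkpm) + I(\phXiSkpm,\ph) = I(\chi(k),\ph)$, to merge the two $I$-terms. What survives, $\pm\frac{\pi m}{4} - a\,I(\chi(k),\ph)$, is precisely $\lak(\ph)$ again by \eqref{Eq:lambda_k}, whence
\beq
  \XiFkpm^*(\ph)
= \varXi_P(\ph) + \lak(\ph)\,\TXi(\ph)\,\ee^{\pm\ii\alpha}\cos\alpha\,,
\eeq
which is exactly the parametric equation of $\XiFkpm$ supplied by Theorem~\ref{Thm:Xi_F}. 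This identifies the rack-cut flank curve with the involute of the base curve $\XiBpm$, as claimed.

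Since the computation is formally identical to the drive-gear case (cf.\ Theorem~\ref{Thm:XFpm^*} and its corollary), there is no genuine conceptual obstacle; the only point demanding care is the consistent bookkeeping of the $\pm/\mp$ sign convention when the singular-point condition is inserted, where a single misread of the upper/lower sign would destroy the cancellation of $\cos\alpha$ and hence the final collapse to $\lak(\ph)$.
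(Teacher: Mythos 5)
Your proposal is correct and follows essentially the same route as the paper's own proof: specialize Theorem~\ref{Thm:XiFpm^*} to $\phXiSkpm$, rewrite the curvature term via the singular-point condition \eqref{Eq:lambda*kaXi} in the form $\lak(\phXiSkpm)\cos\alpha = \pm\sin\alpha/\kaXi(\phXiSkpm)$, factor out $\cos\alpha$, expand $\lak(\phXiSkpm)$ by \eqref{Eq:lambda_k}, and use additivity of $I$ to recover $\lak(\ph)$ and hence the expression of Theorem~\ref{Thm:Xi_F}. The sign bookkeeping you flag ($\mp(\pm\,\cdot\,)=-\,\cdot\,$) is handled exactly as in the paper, so nothing is missing.
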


\begin{proof}
The midpoint of tooth $k$ on the centrode $\varXi_P$ is given by $\varXi_P(\chi(k))$ with $\chi(k)$ according to \eqref{Eq:chi(k)}.
Condition \eqref{Eq:lambda*kaXi} for the value $\phXiSkpm$ of $\ph$ at the singular point of the flank $\XiFkpm$ states that
\beq
  \lak(\phXiSkpm)\cos\alpha
= \pm \frac{\sin\alpha}{\kaXi(\phXiSkpm)}\,.  
\eeq
Using \eqref{Eq:lambda_k}, it follows that
\begin{align*}
  \XiFkpm^*(\ph)
= {} & \varXi_P(\ph) - \left(a\cos\alpha\,I(\phXiSkpm,\ph)
  \mp \frac{\sin\alpha}{\kaXi(\phXiSkpm)}\right) \TXi(\ph)\,\ee^{\pm\ii\alpha}\\[0.05cm] 
= {} & \varXi_P(\ph) - \big[a\cos\alpha\,I(\phXiSkpm,\ph)
  \mp \big({\pm}\lak(\phXiSkpm)\cos\alpha\big)\big]\, \TXi(\ph)\,\ee^{\pm\ii\alpha}\\[0.05cm]  
= {} & \varXi_P(\ph) - \big[aI(\phXiSkpm,\ph)
  - \lak(\phXiSkpm)\big]\, \TXi(\ph)\,\ee^{\pm\ii\alpha}\cos\alpha\\[0.05cm]
= {} & \varXi_P(\ph) - \left[aI(\phXiSkpm,\ph)
  \mp \frac{\pi m}{4} + aI(\chi(k),\phXiSkpm)\right] \TXi(\ph)\,\ee^{\pm\ii\alpha}\cos\alpha\\[0.05cm]
= {} & \varXi_P(\ph) - \left[\mp \frac{\pi m}{4} + a\,\big(I(\chi(k),\phXiSkpm) + I(\phXiSkpm,\ph)\big)\right]
  \TXi(\ph)\,\ee^{\pm\ii\alpha}\cos\alpha\\[0.05cm]
= {} & \varXi_P(\ph) + \left(\pm \frac{\pi m}{4} - aI(\chi(k),\ph)\right)
  \TXi(\ph)\,\ee^{\pm\ii\alpha}\cos\alpha\\[0.05cm]
= {} & \varXi_P(\ph) + \lak(\ph)\,\TXi(\ph)\,\ee^{\pm\ii\alpha}\cos\alpha\,.  
\end{align*}
One sees that the last line is identical to the parametric equation $\XiFkpm(\ph)$ of $\XiFkpm$ in Theorem~\ref{Thm:Xi_F}.
\end{proof}


\section{Algorithm}
\label{Sec:Algorithm}

{\bf For both gears:}
\begin{enumerate}[leftmargin=0.75cm]
\setlength{\itemsep}{-2pt}
\item Assume that the center distance $a$ is equal to 1, and check that the centrode $c_1$ of the drive gear and the centrode $c_2$ of the driven gear are both convex curves.
\item Evaluate the integral $I(0,2\pi)$ (see \eqref{Eq:I}); if necessary, numerically.
\item Choose the module $m$ and the number $z_1$ of teeth of the drive gear, and calculate the center distance with \eqref{Eq:a}.
\item Choose angle $\alpha$, addendum $h_a$, dedendum $\hf$ and fillet radius $\rh$ (see Fig.\ \ref{Fig:Reference_profile01}).
\end{enumerate}

{\bf For the drive gear:} \mynobreakpar
\begin{enumerate}[leftmargin=0.75cm]
\setlength{\itemsep}{-2pt}
\setcounter{enumi}{4}
\item \label{Item:chi(k)}
Determine the values $\chi(k)$, $k = 1,2,\ldots,z_1$, of the parameter (drive angle) $\ph$ from \eqref{Eq:chi(k)}.
(These values divide $X_P$ into arcs of equal length $p = \pi m$. $\chi(k)$ is the value for the mid of tooth $k$.)
\item Determine the values $\phSkpm$, $k=1,2,\ldots,z_1$, of $\ph$ at the singular points of the involutes $\XFkpm \equiv \XFkpm^*$ with \eqref{Eq:lambda*kappa}.
\item Check each tooth flank for the occurrence of undercut with \eqref{Eq:-kappa(ph_(S,pm))}.
Alternatively, the parameters of the gear can be specified using inequality \eqref{Eq:-kappa(ph)} so that no undercut occurs at all.
\item \label{Step:--flank}
Perform the calculations of this step for each ``$-$''-flank.
 
If the flank has undercut, then determine the values $\phFkmE$ and $\phAkmE$ of the angle $\ph$ as solution of 
\beqn \label{Eq:XFkm=Xrkm}
  \XFkm(\phFkmE)
= \Xrkm(\phAkmE)
\eeqn
where $\XFkm(\ph)$ and $\Xrkm(\ph)$ are given by \eqref{Eq:XFkpm} and \eqref{Eq:Xrkpm}, respectively.
If the flank is free of undercut, then $\phFkmE = \phAkmE = \phBkm$ and the value $\phBkm$ of $\ph$ is obtained as solution of \eqref{Eq:-+lambda*cos(alpha)}.
 
Determine the value $\phAkmZ$ of the angle $\ph$ as solution of \eqref{Eq:-lambda_k}.
Determine the values $\phFkmZ$ and $\phpkpE$ of the angle $\ph$ as solution of 
\beqn \label{Eq:XFkm=Xha}
  \XFkm(\phFkmZ)
= X_a(\phpkpE)
\eeqn
with $X_a(\ph) := X_{p,h_a,+}(\ph)$ ($\Xpdpm(\ph)$ see \eqref{Eq:Xpdpm}).
The ``$-$''-flank of tooth $k$ is then given by
\beqn \label{Eq:--flank}
  X
= \left\{\!\!
  \begin{array}{lc}
	\Xrkm(\ph), & 
 		\phAkmE \le \ph \le \phAkmZ\,,\\[0.2cm]
 	\XFkm(\ph)\,, &
 		\phFkmE \le \ph \le \phFkmZ\,.	 
  \end{array}
  \right.
\eeqn
\item \label{Step:+-flank}
Perform the calculations of this step for each ``$+$''-flank.

If the flank has undercut, then determine the values $\phFkpZ$ and $\phAkpZ$ of the angle $\ph$ as solution of 
\beqn \label{Eq:XFkp=Xrkp}
  \XFkp(\phFkpZ)
= \Xrkp(\phAkpZ)
\eeqn
where $\XFkp(\ph)$ and $\Xrkp(\ph)$ are given by \eqref{Eq:XFkpm} and \eqref{Eq:Xrkpm}, respectively.
If the flank is free of undercut, then $\phFkpZ = \phAkpZ = \phBkp$ and the value $\phBkp$ of $\ph$ is obtained as solution of \eqref{Eq:-+lambda*cos(alpha)}.

Determine the values $\phFkpE$ and $\phpkpZ$ of the angle $\ph$ as solution of 
\beqn \label{Eq:XFkp=Xha}
  \XFkp(\phFkpE)
= X_a(\phpkpZ)\,.
\eeqn
Determine the value $\phAkpE$ of the angle $\ph$ as solution of \eqref{Eq:+lambda_k}.
The ``$+$''-flank of tooth $k$ is then given by
\beqn \label{Eq:+-flank}
  X
= \left\{\!\!
  \begin{array}{lc}
	\XFkp(\ph)\,, &
 		\phFkpE	\le \ph \le \phFkpZ\,,\\[0.2cm]
	\Xrkp(\ph), &
 		\phAkpE \le \ph \le \phAkpZ\,.	 
  \end{array}
  \right.
\eeqn
\item The arc of the addendum curve $X_a$ for tooth $k$, $k = 1,\ldots,z_1$, is given by
\beqn \label{Eq:arc_addendum_curve}
  X
= X_a(\ph)\,,\quad
  \phpkpE \le \ph \le \phpkpZ\,,    
\eeqn
with $\phpkpE$ from Step \ref{Step:--flank}.
\item The arc of the dedendum curve $X_f$ between tooth $k$, $k = 1,\ldots,z_1-1$, and tooth $k+1$ is given by
\beqn \label{Eq:arc_dedendum_curve_a}
  X
= X_f(\ph)\,,\quad
  \phAkpE \le \ph \le \ph_{A,\:\!k+1,\:\!-,\:\!2}\,.    
\eeqn
The arc of the dedendum curve $X_f$ between tooth $z_1$ and tooth $1$ is given by
\beqn \label{Eq:arc_dedendum_curve_b}
  X
= X_f(\ph)\,,\quad
  \ph_{A,\:\!z_1,\:\!+,\:\!1} \le \ph \le 2\pi + \ph_{A,\:\!1,\:\!-,\:\!2}\,.   
\eeqn
\end{enumerate}

{\bf For the driven gear:} \mynobreakpar
\begin{enumerate}[leftmargin=0.75cm]
\setlength{\itemsep}{-2pt}
\setcounter{enumi}{11}
\item Determine the values $\chi(k)$, $k = 1,2,\ldots,z_2$, of $\ph$ from \eqref{Eq:chi(k)}.
Since $\chi(1),\ldots,\chi(z_1)$ are already known from Step \ref{Item:chi(k)}, performing this step is only necessary to determine $\chi(z_1+1),\ldots,\chi(z_2)$ if $z_2 > z_1$.
\item Determine the values $\phXiSkpm$, $k=1,2,\ldots,z_2$, of $\ph$ at the singular points of the involutes $\XiFkpm \equiv \XiFkpm^*$ with \eqref{Eq:lambda*kaXi}.
\item Check each tooth flank for the occurrence of undercut with \eqref{Eq:kaXi(ph_(S,pm))}.
Alternatively, the parameters of the gear can be specified using inequality \eqref{Eq:kaXi(ph)} so that no undercut occurs at all.
\item \label{Step:--flankXi}
Perform the calculations of this step for each ``$-$''-flank.
 
If the flank has undercut, then determine the values $\phXiFkmZ$ and $\phXiAkmZ$ and of the angle $\ph$ as solution of 
\beqn \label{Eq:XiFkm=Xirkm}
  \XiFkm(\phXiFkmZ)
= \Xirkm(\phXiAkmZ)
\eeqn
where $\XiFkm(\ph)$ and $\Xirkm(\ph)$ are given by \eqref{Eq:XiFkpm} and \eqref{Eq:Xirkpm}, respectively.
If the flank is free of undercut, then $\phXiFkmZ = \phXiAkmZ = \phXiBkm$ and the value $\phXiBkm$ of $\ph$ is obtained as solution of \eqref{Eq:+-lambda*cos(alpha)}.
 
Determine the value $\phXiAkmE$ of the angle $\ph$ as solution of \eqref{Eq:+-lambda_k}.
Determine the values $\phXiFkmE$ and $\phXipkpE$ of the angle $\ph$ as solution of 
\beqn \label{Eq:XiFkm=Xiha}
  \XiFkm(\phXiFkmE)
= \varXi_a(\phXipkpE)
\eeqn
with $\varXi_a(\ph) \equiv \varXi_{p,h_a,+}(\ph)$ ($\Xipdpm(\ph)$ see \eqref{Eq:Xipdpm}).
The ``$-$''-flank of tooth space $k$ is then given by
\beqn \label{Eq:--flankXi}
  \varXi
= \left\{\!\!
  \begin{array}{lc}
	\XiFkm(\ph), & 
 		\phXiFkmE \le \ph \le \phXiFkmZ\,,\\[0.2cm]
 	\Xirkm(\ph)\,, &
 		\phXiAkmE \le \ph \le \phXiAkmZ\,.	 
  \end{array}
  \right.
\eeqn
\item \label{Step:+-flankXi}
Perform the calculations of this step for each ``$+$''-flank.

If the flank has undercut, then determine the values $\phXiFkpE$ and $\phXiAkpE$ of the angle $\ph$ as solution of 
\beqn \label{Eq:XiFkp=Xirkp}
  \XiFkp(\phXiFkpE)
= \Xirkp(\phXiAkpE)
\eeqn
where $\XiFkp(\ph)$ and $\Xirkp(\ph)$ are given by \eqref{Eq:XiFkpm} and \eqref{Eq:Xirkpm}, respectively.
If the flank is free of undercut, then $\phXiFkpE = \phXiAkpE = \phXiBkp$ and the value $\phXiBkp$ of $\ph$ is obtained as solution of \eqref{Eq:+-lambda*cos(alpha)}.

Determine the value $\phXiAkpZ$ of the angle $\ph$ as solution of \eqref{Eq:+-lambda_k}.
Determine the values $\phXiFkpZ$ and $\phXipkpZ$ of the angle $\ph$ as solution of 
\beqn \label{Eq:XiFkp=Xiha}
  \XiFkp(\phXiFkpZ)
= \varXi_a(\phXipkpZ)\,.
\eeqn
The ``$+$''-flank of tooth space $k$ is then given by
\beqn \label{Eq:+-flankXi}
  \varXi
= \left\{\!\!
  \begin{array}{lc}
	\XiFkp(\ph), & 
 		\phXiFkpE \le \ph \le \phXiFkpZ\,,\\[0.2cm]
 	\Xirkp(\ph)\,, &
 		\phXiAkpE \le \ph \le \phXiAkpZ\,.	 
  \end{array}
  \right.
\eeqn
\item The arc of the dedendum curve $\varXi_f$ of tooth space $k$, $k = 1,\ldots,z_2$, is given by
\beqn \label{Eq:arc_dedendum_curve_Xi} 
  \varXi
= \varXi_f(\ph)\,,\quad
  \phXiAkmE \le \ph \le \phXiAkpZ\,.    
\eeqn
\item The arc of the addendum curve $\varXi_a$ between tooth space $k$, $k = 1,\ldots,z_2-1$, and tooth space $k+1$ is given by
\beqn \label{Eq:arc_addendum_curve_a_Xi}
  \varXi
= \varXi_a(\ph)\,,\quad
  \phXipkpZ \le \ph \le \widetilde{\ph}_{p,\:\!k+1,\:\!+,\:\!1}\,.    
\eeqn
The arc of the addendum curve $\varXi_a$ between tooth space $z_2$ and tooth space $1$ is given by
\beqn \label{Eq:arc_addendum_curve_b_Xi}
  \varXi
= \varXi_a(\ph)\,,\quad
  \widetilde{\ph}_{p,\:\!z_2,\:\!+,\:\!2} \le \ph \le 2\pi + \widetilde{\ph}_{p,\:\!1,\:\!+,\:\!1}\,.   
\eeqn
\end{enumerate}

For the fast numerical solution of the occurring equations, e.g.\ \eqref{Eq:chi(k)} and \eqref{Eq:XFkm=Xrkm}, one can use the one-dimensional or two-dimensional Newton's method, respectively.  

\section{Example}
\label{Sec:Example}

Let the transmission funktion given by
\beq
  \psi(\ph)
= \ph - b\sin\ph\,,\quad 0 \le \ph \le 2\pi\,,\quad b = \tn{const} \in \R\,.  
\eeq
Note that $\psi(2\pi) = 2\pi$.

We consider the centrode $X_P$ of the drive gear.
From \eqref{Eq:r_and_R} it follows that $X_P$ has the polar equation    
\beq 
  r(\ph)
= \frac{1-b\cos\ph}{2-b\cos\ph}\,a\,.  
\eeq
We assume $a = 1$.
Considering Fig.\ \ref{Fig:Centrodes01}, one sees that not every value of $b$ results in a practically usable centrode~$X_P$.
Since the teeth are to be generated with a rack-cutter, only centrodes with curvature $\kappa(\ph) \le 0$  for $0 \le \ph \le 2\pi$ come into question.
In order to avoid gears with exotic shapes, only centrodes without self-intersections should be used.
A small investigation shows that $2 - \sqrt{2} \approx 0.585786$ is the biggest value of $b$ with $\kappa(\ph) \le 0$ for $0 \le \ph \le 2\pi$, and $\max_{\,0 \le \ph \le 2\pi} \kappa(\ph) = 0 = \kappa(0) = \kappa(2\pi)$.
For the centrode $\varXi_P$ of the driven gear one finds $\tilde\kappa(\ph) > 0$ for $b = 2 -\sqrt{2}$ and $0 \le \ph \le 2\pi$.
So in the following we use $b = 2 - \sqrt{2}$.
The graph of $\psi(\ph)$ is shown in Fig.\ \ref{Fig:Transmission_function01}.
Our transmission function is -- up to a phase difference of $\pi$ -- equal to the transmission function shown in \cite[Fig.\ 18\,b]{Litvin&Gonzalez-Perez&Fuentes&Hayasaka08}.

A numerical evaluation yields $I(0,2\pi) \approx 3.09315$ ($I(\ph_0,\ph_1)$ see \eqref{Eq:I}).
Choosing $m = 2$ and $z_1 = 14$, from \eqref{Eq:a} we get $a \approx 28.4385$.   
The centrodes $X_P$ and $\varXi_P$ with this value of $a$ are to be seen in Fig.\ \ref{Fig:Unrundraeder01} (cp.\ the centrodes in \cite[Fig.\ 18\,a]{Litvin&Gonzalez-Perez&Fuentes&Hayasaka08}).

\begin{figure}[h]
\begin{minipage}[c]{0.48\textwidth}
  \centering
  \includegraphics[scale=0.75]{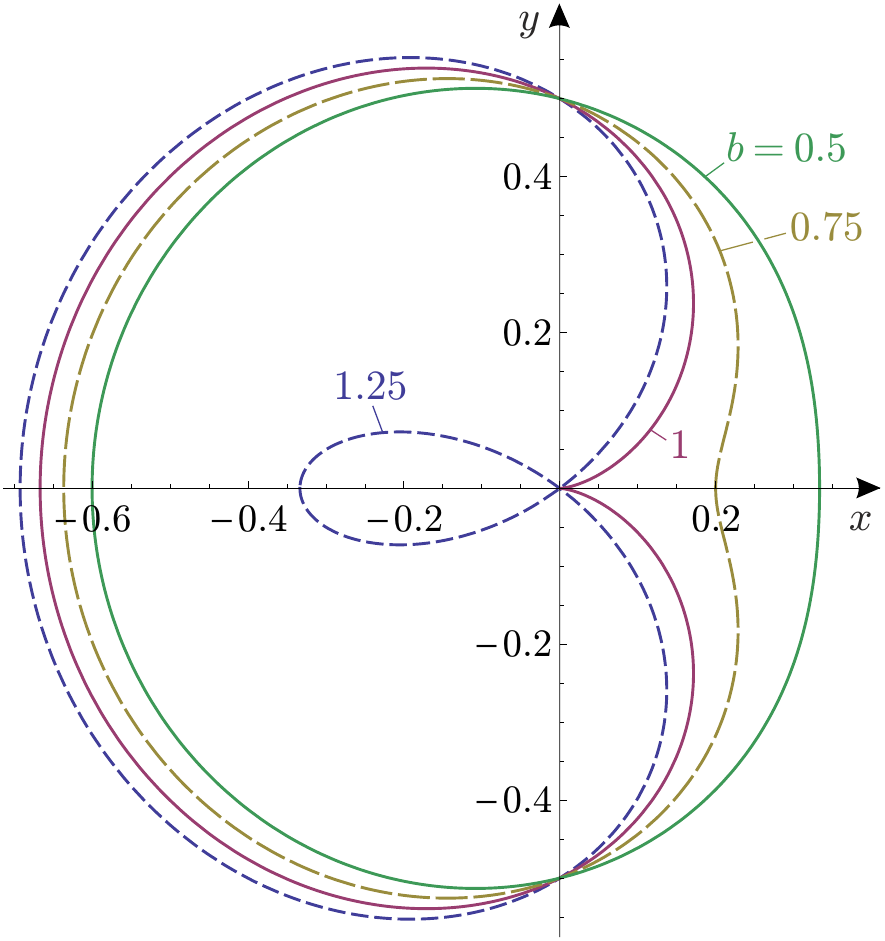}
  \caption{The centrode $X_P$ with $a=1$ and differerent values of the parameter $b$}
  \label{Fig:Centrodes01}
\end{minipage}
\hfill
\begin{minipage}[c]{0.48\textwidth}
  \centering
  \includegraphics[scale=0.75]{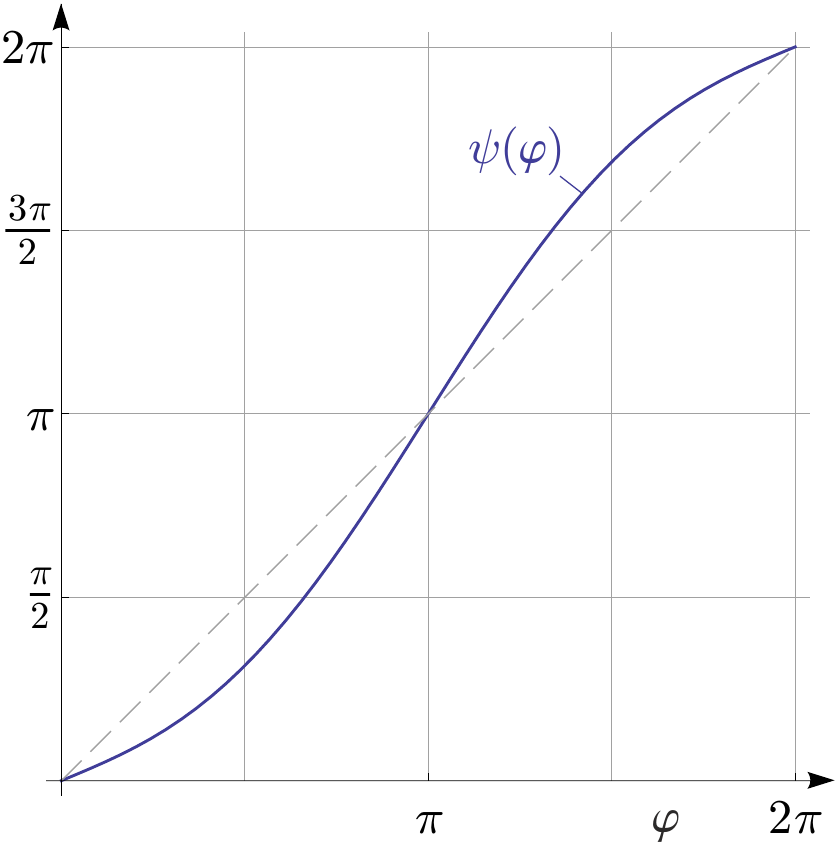}
  \caption{Graph of $\psi(\ph)$ with $b = 2 - \sqrt{2}$}
  \label{Fig:Transmission_function01}
\end{minipage}
\end{figure}

From \eqref{Eq:chi(k)} we get the values $\chi(1),\ldots,\chi(14)$ in Table \ref{Tab:chi(k)}. 

\begin{table}[h]
\caption{Values of $\chi(k)$}
\label{Tab:chi(k)}
\begin{center}
\renewcommand{\arraystretch}{1.2}
\begin{tabular}{|c|ccccccc|} \hline
$k$       & 1     & 2        & 3       & 4       & 5       & 6       & 7      \\
$\chi(k)$ & 0     & 0.674065 & 1.18877 & 1.63010 & 2.03297 & 2.41317 & 2.78037\\ \hline\hline
$k$       & 8     & 9        & 10      & 11      & 12      & 13      & 14     \\
$\chi(k)$ & $\pi$ & 3.50282  & 3.87002 & 4.25022 & 4.65309 & 5.09441 & 5.60912\\ \hline 
\end{tabular}
\renewcommand{\arraystretch}{1}
\end{center}
\end{table} 

After choosing $h_a = m = 2$, $\hf = 1.2\cdot m = 2.4$ and $\rh = 0.3\cdot m = 0.6$ ($h_a$, $\hf$, $\rh$ see Fig.\ \ref{Fig:Reference_profile01}), we get the pair of gears in Fig.\ \ref{Fig:Unrundraeder01}. 

\begin{figure}[h]
  \begin{center}
	\includegraphics[scale=1.2]{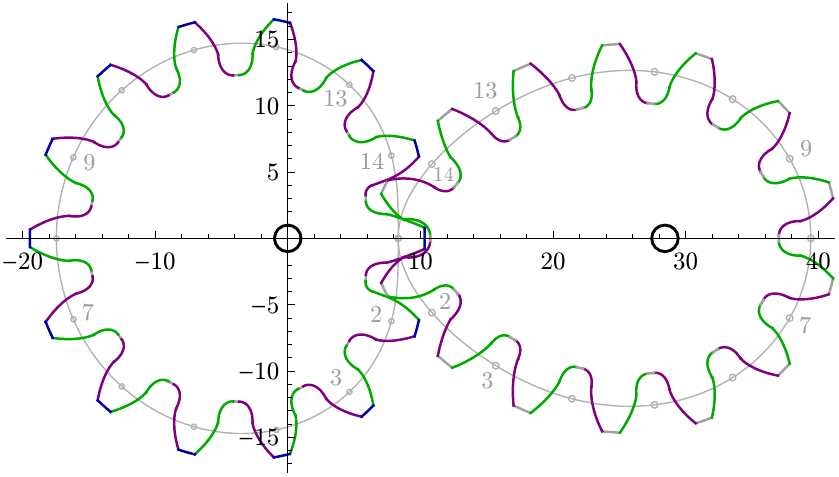}
	\caption{Example with $\psi(\ph) = \ph - \left(2-\sqrt{2}\,\right)\sin\ph$ and $z_1 = 14 = z_2$}
	\label{Fig:Unrundraeder01}
  \end{center}
\end{figure}

Table \ref{Tab:Undercut} shows the values $\phSkpm$ of $\ph$ at the singular points of $\XFkpm$ and the curvatures $\kappa(\phSkpm)$ of $X_P$ at $\phSkpm$ for the drive gear.
From \eqref{Eq:-kappa(ph_(S,pm))} we get $-\kappa(\phSkpm) \le 0.0583369$ as condition for non-undercutting.
So one sees that only the ``$-$''-flank of tooth 2, and the ``$+$''-flank of tooth 14 are free of undercut.
(The curves of tooth 14 are shown in Fig.\ \ref{Fig:Curves_for_tooth_k}.)

\begin{table}[h]
\caption{Drive gear: Singular points $\phSkpm$, curvatures $\kappa(\phSkpm)$ and undercut (UC)}
\label{Tab:Undercut}
\begin{center}
\renewcommand{\arraystretch}{1.2}
\begin{tabular}{|c|ccc|ccc|} \hline
$k$ & $\phSkm$    & $\kappa(\phSkm)$ & UC & $\phSkp$ & $\kappa(\phSkp)$ & UC \\ \hline
1   & $-0.662309$ & $-0.0793920$ & \tb & 0.662309 & $-0.0793920$ & \tb\\
2   & $-0.370208$ & $-0.0459235$ & $-$ & 1.13773  & $-0.0902213$ & \tb\\
3   & 0.697105    & $-0.0816165$ & \tb & 1.60927  & $-0.0827132$ & \tb\\
4   & 1.23593     & $-0.0892922$ & \tb & 2.04386  & $-0.0744589$ & \tb\\
5   & 1.64804     & $-0.0819279$ & \tb & 2.44631  & $-0.0690331$ & \tb\\
6   & 2.02344     & $-0.0748006$ & \tb & 2.82554  & $-0.0662394$ & \tb\\
7   & 2.38300     & $-0.0697165$ & \tb & 3.18979  & $-0.0655454$ & \tb\\
8   & 2.73727     & $-0.0666956$ & \tb & 3.54591  & $-0.0666956$ & \tb\\
9   & 3.09339     & $-0.0655454$ & \tb & 3.90019  & $-0.0697165$ & \tb\\
10  & 3.45764     & $-0.0662394$ & \tb & 4.25974  & $-0.0748006$ & \tb\\
11  & 3.83688     & $-0.0690331$ & \tb & 4.63514  & $-0.0819279$ & \tb\\
12  & 4.23933     & $-0.0744589$ & \tb & 5.04725  & $-0.0892922$ & \tb\\
13  & 4.67392     & $-0.0827132$ & \tb & 5.58608  & $-0.0816165$ & \tb\\
14  & 5.14546     & $-0.0902213$ & \tb & 6.65339  & $-0.0459235$ & $-$\\ \hline
\end{tabular}
\renewcommand{\arraystretch}{1}
\end{center}
\end{table}

Fig.\ \ref{Fig:Evolute_and_involutes01} shows for the drive gear the centrode $X_P$, the base curve (evolute) $\XBp$, the involutes $\XFkp$, $k = 1,\ldots,14$, and for comparison the involute $X_{F\!,\:\!14,\:\!-}$.
The base curve has one cusp and one point at infinity.
The point at infinity occurs for $\ph = 0$ and is caused by the fact that the straight lines whose envelope is the base curve have no intersection in the finite for $\ph = 0$. 
Note that the base curve $\XBm$ and all involutes $\XFkm$ are obtained by mirroring the corresponding ``$+$''-curves on the abscissa.
For example, $X_{F\!,\:\!14,\:\!-}$ corresponds to $X_{F\!,\:\!2,\:\!+}$. 

\begin{figure}[h]
  \begin{center}
	\includegraphics[scale=0.9]{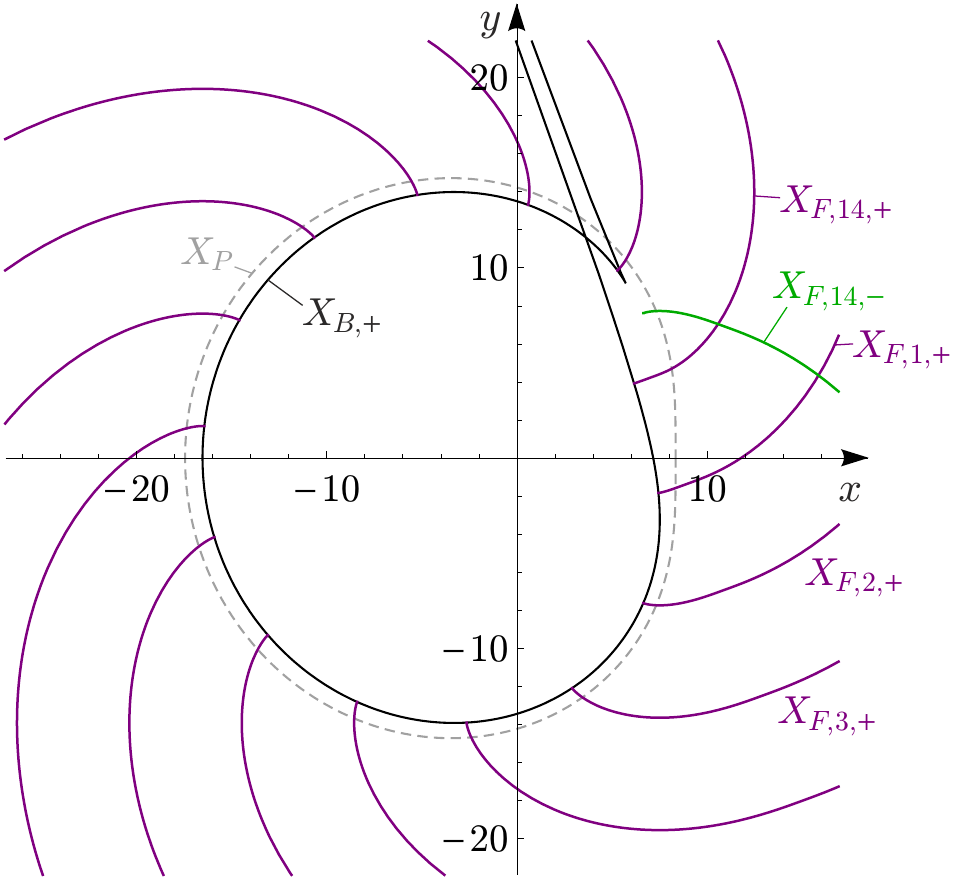}
	\caption{Centrode $X_P$ (dashed), base curve $\XBp$ and involutes of the drive gear}
	\label{Fig:Evolute_and_involutes01} 
  \end{center}
\end{figure}

\clearpage


\appendix

\section{Appendix: The exterior product}
\label{App:Exterior_product}

The {\em exterior product} (see \textcite[p.\ 414]{Luck&Modler}) of two complex numbers $A = x_A + \ii y_A$ and $B  = x_B + \ii y_B$ is a real number, and is defined by
\beqn \label{Eq:[A,B]-Def}
  [A,B]
= \frac{\ii}{2} \left(A\,\overline{B} - \overline{A}\,B\right)
= x_A y_B - y_A x_B\,.  
\eeqn
The real number $[A,B]$ is the signed area of the parallelogram spanned by the complex vectors $A$ and $B$.   
The exterior product is
\beqn \label{Eq:Rules1}
 \left.
  \begin{array}{ll}
	\mbox{anticommutative:}				& [A,B] = -[B,A]\,,\\[0.1cm]
	\mbox{distributive over addition:}	& [A+B,C] = [A,C] + [B,C]\,,\\[0.1cm]
	\mbox{\underline{not} associative:}	& \big[[A,B],C\big] \ne \big[A,[B,C]\big]\,.  
  \end{array}
 \right\} 
\eeqn
Further rules:
\begin{align}
& [1,A] = \Imz A\,,\label{Eq:Rule1}\\[0.1cm]
& [A,Bc] = [Ac,B] = c\,[A,B]\,,\quad c\in\R\,,\label{Eq:Rule2}\\[0.1cm]
& [A\,C,B\,C] = \left[A\,C\,\overline{C},B\right] = C\,\overline{C}\,[A,B]\,.\label{Eq:Rule3}
\end{align}
If $A$ and $B$ are functions of a real parameter $\ph$, then
\begin{align} \label{Eq:[A,B]'}
  [A(\ph),B(\ph)]'
\equiv {} & \frac{\dd}{\dd\ph}\,[A(\ph),B(\ph)]
= \frac{\ii}{2}\, \big(A'\,\overline{B} + A\,\overline{B}'
- \overline{A}'B - \overline{A}\,B'\big)\nonumber\\
= {} & \frac{\ii}{2}\, \big(A'\,\overline{B} - \overline{A}'\,B\big)
+ \frac{\ii}{2}\, \big(A\,\overline{B}' - \overline{A}\,B'\big)\nonumber\\[0.1cm]
= {} & [A'(\ph),B(\ph)] + [A(\ph),B'(\ph)]\qquad \mbox{(product rule)}\,. 
\end{align} 
Clearly, three points $A$, $B$, $X$ lie on a straight line $g$ if
\beq
  [A-X,B-X] = 0\,.
\eeq
From this equation we get
\begin{align*}
  0
= {} & [A,B-X] - [X,B-X]
= [A,B] - [A,X] - [X,B] + \underbrace{[X,X]}_0\\
= {} & [A,B] - \left([A,X] - [B,X]\right)
= [A,B] - [A-B,X]\,,  
\end{align*}
hence
\beqn \label{Eq:EOL} 
  [A-B,X] = [A,B]\,.
\eeqn
If $X$ is an arbitrary point of $g$, Eq.\ \eqref{Eq:EOL} can be considered as the equation of the line through the points $A$ and $B$.

\section{Appendix: Formula symbols}
\label{App:Formula_symbols}

\begin{longtable}{rp{13cm}} \toprule
$a$ & center distance $\rightarrow$ Fig.\ \ref{Fig:Waelzkurven01}, \eqref{Eq:a}\\ \midrule
$B$ & contact point of $\XFkpm$ and $\Xrkpm$ $\rightarrow$ \eqref{Eq:-+lambda*cos(alpha)}\\ \midrule
$\C$ & set of complex numbers $\rightarrow$ \eqref{Eq:X_P}, \eqref{Eq:Xi_P}\\
$C_\pm$ & point of the rack-cutter $\rightarrow$ Figures \ref{Fig:Reference_profile01} and \ref{Fig:Contact_point01}\\ \midrule
$D_\pm$ & point of the rack-cutter $\rightarrow$ Figures \ref{Fig:Reference_profile01} and \ref{Fig:Contact_point02}\\ \midrule
$h(\ph)$ & auxiliary function $\rightarrow$ \eqref{Eq:h}, \eqref{Eq:h=s'*kappa}\\
$\hXi(\ph)$ & auxiliary function $\rightarrow$ \eqref{Eq:hXi}\\
$h_a$ & addendum of the gear teeth, dedendum of the rack-cutter teeth $\rightarrow$ Fig.\ \ref{Fig:Reference_profile01}\\ 
$\hf$ & dedendum of the gear teeth, addendum of the rack-cutter teeth $\rightarrow$ Fig.\ \ref{Fig:Reference_profile01}\\
$\hf^*$ & factor $\rightarrow$ \eqref{Eq:hf^*_and_rho^*}\\ \midrule
$I(\ph_0,\ph_1)$ & integral $\rightarrow$ \eqref{Eq:I}\\ 
$\ii$ & imaginary unit, $\ii^2 = -1$\\ \midrule
$\ell_1$ & distance $\rightarrow$ Figures \ref{Fig:Reference_profile01} and \ref{Fig:Contact_point01}\\
$\ell_2$ & distance $\rightarrow$ Figures \ref{Fig:Reference_profile01} and \ref{Fig:Contact_point01}\\ 
$\ell_3$ & distance $\rightarrow$ Figures \ref{Fig:Reference_profile01} and \ref{Fig:Contact_point02}\\
$\ell_4$ & distance $\rightarrow$ Figures \ref{Fig:Reference_profile01} and \ref{Fig:Contact_point02}\\ \midrule
$M_\pm$ & mid point of the fillet of the rack-cutter ``$\pm$''-flank $\rightarrow$ Fig.\ \ref{Fig:Reference_profile01}\\
$m$ & module $\rightarrow$ Fig.\ \ref{Fig:Reference_profile01}, \eqref{Eq:a}\\ \midrule
$\NMkpm(\ph)$ & normal unit vector at point $\XMkpm(\ph)$ of the mid point curve $\XMkpm$ $\rightarrow$ \eqref{Eq:NMkpm}\\
$\NXiMkpm(\ph)$ & normal unit vector at point $\XiMkpm(\ph)$ of the mid point curve $\XiMkpm$ $\rightarrow$ \eqref{Eq:NXi_M}\\ \midrule
$O_1$ & pivot point of the drive gear $\rightarrow$ Fig.\ \ref{Fig:Waelzkurven01}\\
$O_2$ & pivot point of the driven gear $\rightarrow$ Fig.\ \ref{Fig:Waelzkurven01}\\ \midrule
$P$ & pitch point, instanteneous center of relative rotation of drive and driven gear $\rightarrow$ Fig.\ \ref{Fig:Waelzkurven01}\\
$p$ & tooth pitch $p = \pi m$ $\rightarrow$ Fig.\ \ref{Fig:Reference_profile01}\\ \midrule
$\R$ & set of real numbers\\
$R(\ph)$ & distance between $O_2$ and $P$ $\rightarrow$ Fig.\ \ref{Fig:Waelzkurven01}, \eqref{Eq:r_and_R}\\
$r(\ph)$ & distance between $O_1$ and $P$ $\rightarrow$ Fig.\ \ref{Fig:Waelzkurven01}, \eqref{Eq:r_and_R}\\
$R_1$ & point of the rack-cutter that coincides with $X_P(\chi(k))$ for $\ph = \chi(k)$ when the tooth $k$ of the driving gear is generated $\rightarrow$ Fig.\ \ref{Fig:Reference_profile01}\\
$R_2$ & point of the rack-cutter that coincides with $\varXi_P(\chi(k))$ for $\ph = \chi(k)$ when the tooth space $k$ of the driven gear is generated $\rightarrow$ Fig.\ \ref{Fig:Reference_profile01}\\ \midrule
$S$ & singularity of $\XFkpm$ $\rightarrow$ \eqref{Eq:lambda*kappa}\\
$s(\ph_0,\ph_1)$ & arc length between points $\ph_0$ and $\ph_1$ $\rightarrow$ \eqref{Eq:s}\\ \midrule
$t$ & time $\rightarrow$ \eqref{Eq:psi'(phi(t))}\\
$T(\ph)$ & tangent unit vector at point $X_P(\ph)$ of $X_P$ $\rightarrow$ \eqref{Eq:T}\\
$t_x$ & real part of $T(\ph)$ $\rightarrow$ \eqref{Eq:tx_and_ty}\\  
$t_y$ & imaginary part of $T(\ph)$ $\rightarrow$ \eqref{Eq:tx_and_ty}\\
$\TXi(\ph)$ & tangent unit vector at point $\varXi(\ph)$ of $\varXi_P$  $\rightarrow$ \eqref{Eq:TXi}\\
$t_\xi$ & real part of $\TXi(\ph)$ $\rightarrow$ \eqref{Eq:txi_and_teta}\\  
$t_\eta$ & imaginary part of $\TXi(\ph)$ $\rightarrow$ \eqref{Eq:txi_and_teta}\\ \midrule
$w(\ph)$ & auxiliary function $\rightarrow$ \eqref{Eq:w}\\ \midrule
$X_a$ & addendum curve of the drive gear: $X_a := X_{p,h_a,+}$ $\rightarrow$ Fig.\ \ref{Fig:Curves_for_tooth_k}, \eqref{Eq:Xpdpm}\\
$\XBpm(\ph)$ & parametric equation of the bases curves (evolutes) $\XBpm$ for the flank curves (involutes) ``$+$'' and ``$-$'' of the drive gear $\rightarrow$ \eqref{Eq:X_B}\\  
$\XFkpm(\ph)$ & parametric equation of the flank curves $\XFkpm$ of the tooth $k$ (see Fig.\ \ref{Fig:Curves_for_tooth_k}) of the drive gear $\rightarrow$ \eqref{Eq:XFkpm}\\
$\xFkpm(\ph)$ & real part of $\XFkpm(\ph)$ $\rightarrow$ Corollary \ref{Coro:xF_and_yF}\\
$\XFpm^*(\ph)$ & parametric equation of the involutes $\XFpm^*$ of the base curve $\XBpm$ $\rightarrow$ Theorem~\ref{Thm:XFpm^*}\\
$X_f$ & dedendum curve of the drive gear: $X_f := X_{p,\hf,-}$ $\rightarrow$ Fig.\ \ref{Fig:Curves_for_tooth_k}, \eqref{Eq:Xpdpm}\\
$\XMkpm(\ph)$ & parametric equations of the curves $\XMkp$ and $\XMkm$ (see Fig.\ \ref{Fig:Curves_for_tooth_k}) of the points $M_+$ and $M_-$, respectively, in Fig.\ \ref{Fig:Reference_profile01} $\rightarrow$ \eqref{Eq:XMkpm}\\ 
$X_P(\ph)$ & parametric equation of the centrode $X_P$ of the drive gear $\rightarrow$ \eqref{Eq:X_P}, Fig.\ \ref{Fig:Waelzkurven01}\\
$\Xpdpm(\ph)$ & parametric equations of an outer (``$+$'') and inner (``$-$'') parallel curve of $X_P$ with distance $d$ $\rightarrow$~\eqref{Eq:Xpdpm}\\
$\XWkpm(\ph,\mu)$ & parametric equations of the rack-cutter flank lines for the generation of tooth $k$ of the drive gear $\rightarrow$ \eqref{Eq:XWkpm} (for the definition of $``+$'' and  $``-$'' see Fig.\ \ref{Fig:Reference_profile01})\\
$\xWkpm(\ph,\mu)$ & real part $\XWkpm(\ph,\mu)$ $\rightarrow$ \eqref{Eq:xWk_and_yWk}\\  
$\Xrkpm(\ph)$ & parametric equations of the parallel curves (fillets) $\Xrkpm$ (see Fig.\ \ref{Fig:Curves_for_tooth_k}) of $\XMkpm$ with distance $\rh$ $\rightarrow$ \eqref{Eq:Xrkpm}\\ \midrule
$\yFkpm(\ph)$ & imaginary part of $\XFkpm(\ph)$ $\rightarrow$ Corollary \ref{Coro:xF_and_yF}\\
$\yWkpm(\ph,\mu)$ & imaginary part of $\XWkpm(\ph,\mu)$ $\rightarrow$ \eqref{Eq:xWk_and_yWk}\\ \midrule
$z_1$ & number of teeth of the drive gear $\rightarrow$ \eqref{Eq:a}\\
$z_2$ & number of teeth of the driven gear\\ \midrule
$\alpha$ & rack-cutter profile angle $\rightarrow$ Fig.\ \ref{Fig:Reference_profile01}\\ \midrule
$\gamma$ & rotation angle of the driven gear  $\rightarrow$ Fig.\ \ref{Fig:Waelzkurven01}, \eqref{Eq:gamma}\\ \midrule
$\etaFkpm(\ph)$ & imaginary part of $\XiFkpm(\ph)$ $\rightarrow$ Corollary \ref{Coro:xiF_and_etaF}\\
$\etaWkpm(\ph,\mu)$ & imaginary part $\XiWkpm(\ph,\mu)$ $\rightarrow$ \eqref{Eq:xiWk_and_etaWk}\\ \midrule
$\kappa(\ph)$ & curvature at point $\ph$ of $X_P$ $\rightarrow$ \eqref{Eq:kappa(phi)}\\
$\kaXi(\ph)$ & curvature at point $\ph$ of $\varXi_P$ $\rightarrow$ \eqref{Eq:kaXi(phi)}\\ \midrule
$\lak(\ph)$ & auxiliary function $\rightarrow$ \eqref{Eq:lambda_k}\\ \midrule
$\varXi_a$ & addendum curve of the driven gear: $\varXi_a := \varXi_{p,h_a,+}$, \eqref{Eq:Xipdpm}\\
$\XiBpm(\ph)$ & parametric equation of the base curves (evolutes) $\XiBpm$ for the flank curves (involutes) ``$+$'' and ``$-$'' of the driven gear $\rightarrow$ \eqref{Eq:Xi_B}\\
$\XiFkpm(\ph)$ & parametric equations of the flank curves $\XiFkpm$ of the tooth space $k$ of the driven gear $\rightarrow$ \eqref{Eq:XiFkpm}\\
$\xiFkpm(\ph)$ & real part of $\XiFkpm(\ph)$ $\rightarrow$ Corollary \ref{Coro:xiF_and_etaF}\\  
$\XiFpm^*(\ph)$ & parametric equation of the involutes $\XiFpm^*$ of the base curve $\XiBpm$ $\rightarrow$ Theorem~\ref{Thm:XiFpm^*}\\
$\varXi_f$ & dedendum curve of the driven gear: $\varXi_f := \varXi_{p,\hf,-}$, \eqref{Eq:Xipdpm}\\
$\XiMkpm(\ph)$ & parametric equation of the curves $\XiMkpm$ of the points $M_+$ and $M_-$, respectively, in Fig.\ \ref{Fig:Reference_profile01} $\rightarrow$ \eqref{Eq:XiMkpm}\\
$\varXi_P(\ph)$ & parametric equation of the centrode $\varXi_P$ of the driven gear $\rightarrow$ \eqref{Eq:Xi_P}, Fig.\ \ref{Fig:Waelzkurven01}\\
$\Xipdpm(\ph)$ & parametric equation of an outer (``$+$'') and inner (``$-$'') parallel curve of $\varXi_P$ with distance $d$ $\rightarrow$~\eqref{Eq:Xipdpm}\\
$\XiWkpm(\ph,\mu)$ & parametric equation of the rack-cutter flanks for the generation of tooth space~$k$ of the driven gear $\rightarrow$ \eqref{Eq:XiWkpm} (for the definition of $``+$'' and  $``-$'' see Fig.\ \ref{Fig:Reference_profile01})\\
$\xiWkpm(\ph,\mu)$ & real part of $\XiWkpm(\ph,\mu)$ $\rightarrow$ \eqref{Eq:xiWk_and_etaWk}\\
$\Xirkpm(\ph)$ & parametric equation of the parallel curves (fillets) $\Xirkpm$ with distance $\rh$ to curve $\XiMkpm$ $\rightarrow$ \eqref{Eq:Xirkpm}\\ \midrule
$\rh$ & fillet radius $\rightarrow$ Fig.\ \ref{Fig:Reference_profile01}\\
$\rh^*$ & factor $\rightarrow$ \eqref{Eq:hf^*_and_rho^*}\\ \midrule
$\ph$ & rotation angle of the drive gear $\rightarrow$ Fig.\ \ref{Fig:Waelzkurven01}\\
$\ph^*(t)$ & rotation angle of the drive gear as time function $\rightarrow$ \eqref{Eq:psi^*}\\
$\dot\ph^*(t)$ & angular velocity of the drive gear $\rightarrow$ \eqref{Eq:psi'(phi(t))}\\
$\phAkpE$ & value of $\ph$ with $\Xrkp(\ph) \cap X_f \ne \emptyset$ $\rightarrow$ \eqref{Eq:+lambda_k}, \eqref{Eq:+-flank}, \eqref{Eq:arc_dedendum_curve_a}, \eqref{Eq:arc_dedendum_curve_b}\\
$\phAkpZ$ & value of $\ph$ with $\Xrkp(\ph)\cap\XFkp\ne\emptyset$ $\rightarrow$ \eqref{Eq:XFkp=Xrkp} or \eqref{Eq:-+lambda*cos(alpha)}, \eqref{Eq:+-flank}\\
$\phFkpE$ & value of $\ph$ with $\XFkp(\ph) \cap X_a \ne \emptyset$ $\rightarrow$ \eqref{Eq:XFkp=Xha}, \eqref{Eq:+-flank}\\
$\phFkpZ$ & value of $\ph$ with $\XFkp(\ph)\cap\Xrkp\ne\emptyset$ $\rightarrow$ \eqref{Eq:XFkp=Xrkp} or \eqref{Eq:-+lambda*cos(alpha)}, \eqref{Eq:+-flank}\\
$\phpkpE$ & value of $\ph$ with $X_a(\ph)\cap\XFkm\ne\emptyset$ $\rightarrow$ \eqref{Eq:XFkm=Xha}, \eqref{Eq:arc_addendum_curve}\\
$\phpkpZ$ & value of $\ph$ with $X_a(\ph)\cap\XFkp\ne\emptyset$ $\rightarrow$ \eqref{Eq:XFkp=Xha}, \eqref{Eq:arc_addendum_curve}\\
$\phAkmE$ & value of $\ph$ with $\Xrkm(\ph)\cap\XFkm\ne\emptyset$ $\rightarrow$ \eqref{Eq:XFkm=Xrkm} or \eqref{Eq:-+lambda*cos(alpha)}, \eqref{Eq:--flank}\\
$\phAkmZ$ & value of $\ph$ with $\Xrkm(\ph) \cap X_f \ne \emptyset$ $\rightarrow$ \eqref{Eq:--flank}, \eqref{Eq:arc_dedendum_curve_a}, \eqref{Eq:arc_dedendum_curve_b}\\
$\phFkmE$ & value of $\ph$ with $\XFkm(\ph)\cap\Xrkm\ne\emptyset$ $\rightarrow$ \eqref{Eq:XFkm=Xrkm} or \eqref{Eq:-+lambda*cos(alpha)}, \eqref{Eq:--flank}\\
$\phFkmZ$ & value of $\ph$ with $\XFkm(\ph) \cap X_a\ne\emptyset$ $\rightarrow$ \eqref{Eq:XFkm=Xha}, \eqref{Eq:--flank}\\
$\phSkpm$ & value of $\ph$ at the singular point of $\XFkpm$ $\rightarrow$ \eqref{Eq:XFkpm'(phi)=0}, \eqref{Eq:lambda*kappa}\\
$\phSpm$ & value of $\ph$ at the singular point of $\XFpm^*$ ($\rightarrow$ \eqref{Eq:XFp^*} and \eqref{Eq:XFm^*}) or $\XiFpm^*$ ($\rightarrow$~Theorem \ref{Thm:XiFpm^*})\\
$\phXiAkpE$ & value of $\ph$ with $\Xirkp(\ph)\cap\XiFkp\ne\emptyset$ $\rightarrow$ \eqref{Eq:XiFkp=Xirkp} of \eqref{Eq:+-lambda*cos(alpha)}, \eqref{Eq:+-flankXi}\\
$\phXiAkpZ$ & value of $\ph$ with $\Xirkp(\ph) \cap \varXi_f \ne \emptyset$ $\rightarrow$ \eqref{Eq:+-lambda_k}, \eqref{Eq:+-flankXi}\\
$\phXiFkpE$ & value of $\ph$ with $\XiFkp(\ph)\cap\Xirkp\ne\emptyset$ $\rightarrow$  \eqref{Eq:XiFkp=Xirkp} or \eqref{Eq:+-lambda*cos(alpha)}, \eqref{Eq:+-flankXi}\\
$\phXiFkpZ$ & value of $\ph$ with $\XiFkp(\ph) \cap \varXi_a \ne \emptyset$ $\rightarrow$ \eqref{Eq:XiFkp=Xiha}, \eqref{Eq:+-flankXi}\\
$\phXipkpE$ & value of $\ph$ with $\varXi_a(\ph)\cap\XiFkm\ne\emptyset$ $\rightarrow$ \eqref{Eq:XiFkm=Xiha}, \eqref{Eq:arc_addendum_curve_a_Xi}, \eqref{Eq:arc_addendum_curve_b_Xi}\\
$\phXipkpZ$ & value of $\ph$ with $\varXi_a(\ph)\cap\XiFkp\ne\emptyset$ $\rightarrow$ \eqref{Eq:XiFkp=Xiha}, \eqref{Eq:arc_addendum_curve_a_Xi}, \eqref{Eq:arc_addendum_curve_b_Xi}\\
$\phXiAkmE$ & value of $\ph$ with $\Xirkm(\ph) \cap \varXi_f \ne \emptyset$ $\rightarrow$ \eqref{Eq:+-lambda_k}, \eqref{Eq:--flankXi}, \eqref{Eq:arc_dedendum_curve_Xi}\\
$\phXiAkmZ$ & value of $\ph$ with $\Xirkm(\ph)\cap\XiFkm\ne\emptyset$ $\rightarrow$ \eqref{Eq:XiFkm=Xirkm} or \eqref{Eq:+-lambda*cos(alpha)}, \eqref{Eq:--flankXi}\\
$\phXiFkmE$ & value of $\ph$ with $\XiFkm(\ph) \cap \varXi_a \ne \emptyset$ $\rightarrow$ \eqref{Eq:XiFkm=Xiha}, \eqref{Eq:--flankXi}\\
$\phXiFkmZ$ & value of $\ph$ with $\XiFkm(\ph)\cap\Xirkm\ne\emptyset$ $\rightarrow$ \eqref{Eq:XiFkm=Xirkm} or \eqref{Eq:+-lambda*cos(alpha)}, \eqref{Eq:--flankXi}\\
$\phXiSkpm$ & value of $\ph$ at the singular point of $\XiFkpm$ $\rightarrow$ \eqref{Eq:XiFkpm'(phi)=0}, \eqref{Eq:lambda*kaXi}\\
\midrule
$\chi(k)$ & value of $\ph$ in the mid of tooth $k$, $k = 1,2,\ldots,z_1$ of the drive gear $\rightarrow$ \eqref{Eq:chi(k)}, value of $\ph$ in the mid of tooth space $k$, $k = 1,2,\ldots,z_2$ of the driven gear\\ \midrule
$\psi(\ph)$ & transmission function $\rightarrow$ \eqref{Eq:psi}, \eqref{Eq:gamma}\\
$\psi^*(t)$ & rotation angle of the driven gear as time function $\rightarrow$ \eqref{Eq:psi^*}\\
$\dot\psi^*(t)$ & angular velocity of the driven gear $\rightarrow$ \eqref{Eq:psi'(phi(t))}\\ \midrule
$\overline A$ & conjugate $a - b\ii$ of the complex number $A = a + b\ii$\\
$[A,B]$ & exterior product of the complex numbers $A$ and $B$ $\rightarrow$ Appendix \ref{App:Exterior_product}\\
\bottomrule
\end{longtable}

\printbibliography

\vspace{0.2cm}
English translations of German titles:
\begin{itemize}
\setlength{\itemsep}{-2pt}
\item[\cite{Baule1}\;\:] {\em Differential und integral calculus}
\item[\cite{DIN3972}\;\:] {\em Reference profiles for rack-cutters for involute gears according to DIN 867}
\item[\cite{Kuehnel}\;\:] {\em Differential geometry. Curves - surfaces - manifolds}
\item[\cite{Luck&Modler}\;\:] {\em Design of mechanisms -- Analysis, synthesis, optimization}
\item[\cite{Mueller:Kinematik}\;\:] {\em Kinematics}
\item[\cite{Wunderlich}\;\:] {\em Plane kinematics}
\end{itemize}

\vspace{1cm}

\begin{center}
Uwe B{\"a}sel\\[0.2cm]
HTWK Leipzig, University of Applied Sciences,\\
Faculty of Engineering,\\
PF 30 11 66, 04251 Leipzig, Germany\\[0.2cm]
\texttt{uwe.baesel@htwk-leipzig.de} 
\end{center}

\end{document}